\numberwithin{equation}{section}
\theoremstyle{plain}
\newtheorem{theorem}[equation]{Theorem}
\newtheorem{corollary}[equation]{Corollary}
\newtheorem{proposition}[equation]{Proposition}
\newtheorem{lemma}[equation]{Lemma}
\theoremstyle{definition}
\newtheorem{remark}[equation]{Remark}
\newtheorem{example}[equation]{Example}
\newtheorem{examples}[equation]{Examples}
\newtheorem{definition}[equation]{Definition}
\newtheorem{notation}[equation]{Notation}
\newcommand{\iip}{ideal intersection property}
\newcommand{\riip}{regular ideal intersection property}
\newcommand{\bC}{{\mathbb{C}}}
\newcommand{\bbC}{\bC}
\newcommand{\bD}{{\mathbb{D}}}
\newcommand{\bF}{{\mathbb{F}}}
\newcommand{\bN}{{\mathbb{N}}}
\newcommand{\bP}{{\mathbb{P}}}
\newcommand{\bT}{{\mathbb{T}}}
\newcommand{\bZ}{{\mathbb{Z}}}
  \newcommand{\A}{{\mathcal{A}}}
  \newcommand{\N}{{\mathcal{N}}}
\renewcommand{\phi}{\varphi}
\newcommand{\upchi}{{\raise.35ex\hbox{\ensuremath{\chi}}}}
\newcommand{\hull}{\operatorname{hull}}
\newcommand{\Prim}{\operatorname{Prim}}
\newcommand{\spn}{\operatorname{span}}
\newcommand{\Interior}[2]{{#1}^{\circ_{#2}}}
\newcommand{\interior}[1]{{#1}^{\circ}}
\newcommand{\inv}{^{-1}}
\newcommand{\cstar}{\hbox{$C^*$}}
\newcommand{\cstaralg}{\cstar-algebra}
\newcommand{\ca}{\mathrm{C}^*}
\renewcommand{\ca}{\cstar}
\renewcommand{\>}{\rangle}
\newcommand{\ol}{\overline}
\renewcommand{\subset}{\subseteq} %For Jon
\newcommand{\go}{G^{(0)}}
\newcommand{\susbeteq}{\subseteq} %common typo
\providecommand{\innerprod}[1]{\left\langle #1\right\rangle}
\providecommand{\dperp}{{\perp\perp}}
\providecommand{\idealin}{\unlhd}
\providecommand{\dstext}[1]{\quad\text{#1}\quad}
\providecommand{\unit}[1]{#1^{(0)}}
\DeclareMathOperator{\bisection}{bi}
\newcommand{\Nbi}{N_{\bisection}}
\begin{document}

%%%%%%%%%%%%%%%%%%%%%%%%%%%%%%%%%%%%%%

\title{Regular ideals, ideal intersections, and quotients}

\author[J.H. Brown]{Jonathan H. Brown}
\address[J.H. Brown]{
Department of Mathematics\\
University of Dayton\\
%300 College Park
Dayton\\
OH 45469-2316\  U.S.A.} \email{jonathan.henry.brown@gmail.com}

\author[A.H. Fuller]{Adam H. Fuller}
\address[A.H. Fuller]{
Department of Mathematics\\
Ohio University\\
Athens\\
OH 45701\  U.S.A.}
\email{fullera@ohio.edu}

\author[D.R. Pitts]{David R. Pitts}\thanks{This work was supported by a grants from the Simons Foundation (DRP \#316952, SAR \#36563); and by the American Institute of Mathematics SQuaREs Program.
	The authors have no relevant financial or non-financial interests to disclose.}  \address[D.R. Pitts]{
  Department of Mathematics\\
  University of Nebraska-Lincoln\\
  Lincoln\\
  NE 68588\ %-0130
  U.S.A.}  \email{dpitts2@unl.edu}

\author[S.A. Reznikoff]{Sarah A. Reznikoff}
\address[S.A. Reznikoff]{
Department of Mathematics\\
%138 Cardwell Hall\\
Virginia Tech\\
Blacksburg, VA 24061-1026\  U.S.A. }
\email{reznikoff@vt.edu}

%\date{\today}

\begin{abstract}
  Let $B \subseteq A$ be an inclusion of \cstaralg s.  We study the
  relationship between the regular ideals of $B$ and regular ideals of
  $A$.  We show that if $B \subseteq A$ is a regular \cstar-inclusion
  and there is a faithful invariant conditional expectation from $A$
  onto $B$, then there is an isomorphism between the lattice of
  regular ideals of $A$ and invariant regular ideals of $B$.  We study
  properties of inclusions preserved under quotients by regular
  ideals.  This includes showing that if $D \subseteq A$ is a Cartan
  inclusion and $J$ is a regular ideal in $A$, then $D/(J\cap D)$ is a
  Cartan subalgebra of $A/J$.   We provide a description of regular ideals in
  reduced crossed products $A \rtimes_r \Gamma$.
\end{abstract}

\maketitle

\section{Introduction}

An inclusion  $B\subseteq A$ of \cstaralg s has the {\em ideal intersection property} if every non-trivial ideal
 of 
  $A$ has non-trivial intersection with $B$. 
The \iip\ 
is useful for obtaining structural results for the inclusion.   Here are
a few of the many examples.
\begin{enumerate}
  \item When $B$ is a maximal abelian subalgebra of $A$, the ideal intersection property is a key ingredient for  establishing the Cuntz-Kreiger Uniqueness theorems for graph $C^*$-algebras (see for example \cite[Theorem~3.13]{NagRez2012}) and for
groupoids \cite[Theorem~3.1]{BNRSW2016}.
Further work on the ideal intersection property in groupoid C$^*$-algebras has been carried out in \cite{Bor2019} and \cite{KKLRU2021}.

\item  When there is an action of $\bT$ on $A$, and $B$ is the fixed
  point algebra, the ideal intersection property of $A$
is
used to establish the gauge-invariant uniqueness theorems for graph
algebras \cite[Theorem~2.3]{anHRae1997} and Cuntz-Pimsner algebras \cite[Theorem~6.2]{Kat2004}
among others.
\item If $G$ is a discrete group acting topologically freely on a
  locally compact Hausdorff space $X$, the ideal intersection property for
$C(X)\subseteq C(X)\rtimes_r G$ is used to obtain the simplicity result
of \cite[Theorem~2]{ArcSpie1994}.

%\item 
%More generally, necessary and sufficient conditions for $C(X)\subseteq C(X)\rtimes_r G$  to have the ideal intersection property are given in \cite{Kaw2017}.
%The ideal intersection property for more general crossed products $A \subseteq A\rtimes G$ is studied in \cite{KenSch2019}, when $A$ is not necessarily abelian.
%For crossed products by inverse semigroups, the ideal intersection property of $A \subseteq A \rtimes S$ is discussed in \cite{KwaMey2021b}.

\item The \iip\ has been used to characterize inclusions for which
  every pseudo-expectation is faithful \cite[Theorem~3.5]{PitZar2015}.
\item More recently, Pitts \cite[Theorem~5.2]{Pit2021} used the
ideal intersection property to characterize the existence of Cartan
envelopes.
\end{enumerate}

The \iip\ holds for certain crossed products.
Necessary and sufficient conditions for $C(X)\subseteq C(X)\rtimes_r G$  to have the ideal intersection property are given in \cite{Kaw2017}.   
Sierakowski \cite{Sie2010} shows that
$A\subseteq A\rtimes_r G$ has the \iip\ for an exact action of
discrete group $G$ on $C^*$-algebra $A$ whose induced action on
$\Prim(A)$ is essentially free.  Kennedy and Schafhauser
\cite{KenSch2019} further characterize when
$A\subseteq A\rtimes_r G$ has the \iip\ in terms of outerness and a
cohomological obstruction.

Unfortunately, the ideal intersection property does not pass, in
general, to quotients (see \cite[Example~4.8]{RaeGraphAlg} or
Example~\ref{ex Disk} below).  In the setting of amenable groupoid $C^*$-algebras, that is the inclusion $C_0(\go)\subseteq C^*(G)$, the ideal intersection property passes to quotients if and only if there is a correspondence between the lattice of ideals of $C_0(\go)$ and $C^*(G)$ \cite[Corollary~5.9]{BCFS2014} (for one direction see \cite[Corollary~4.9]{Ren1991}). Restricting attention to a class of inclusions where the ideal intersection property passes to quotients has been quite productive in the study of ideals of these algebras.  For example, Kumjian et al. \cite{KPRR1997} introduce condition~(K) so they could use  \cite[Corollary~4.9]{Ren1991} to study the ideals for this restricted class of graph $C^*$-algebras.  Indeed, for algebras of  row-finite  directed graphs $C^*(E)$ whose underlying graph $E$ satisfies condition~(K),  the inclusion $D\subseteq C^*(E)$, where $D$ is the (abelian) algebra generated by the range projections of partial isometries corresponding to finite paths in $E$, has the property that the ideal intersection property passes to quotients. 

Rather than restricting the types of inclusions considered, the paper
\cite{BFPR2021reg} takes a different tack, instead limiting the types
of ideals to {\em regular ideals}: a main result of
\cite{BFPR2021reg} shows that the \iip\ passes to
quotients of graph algebras by regular ideals.

For a commutative $C^*$-algebra $B$, regular ideals in $B$ correspond
exactly to regular open sets in $\widehat B$.  More generally, for an
arbitrary \cstaralg\ $B$, let $\overline B$ be the monotone completion
of $B$ and let $Z(\overline B)$ be the center of $\overline B$.
Hamana shows that the lattice of regular open sets in $\Prim(B)$ is
isomorphic to the projection lattice of $Z(\overline B)$, which in
turn is isomorphic to the lattice of all regular ideals of $B$, see
~\cite[Lemma~1.4 and Theorem~1.5]{Ham1982}.  
Hamana concludes in~\cite[p.\ 526, Remark~(b)]{Ham1982} that if $I(B)$ is the
injective envelope of $B$, and $A$ is a \cstaralg\ with
$B\subseteq A\subseteq I(B)$, then the map $J\mapsto J\cap B$ gives an
isomorphism of the Boolean algebra of regular ideals of $A$ onto the
Boolean algebra of regular ideals of $B$.  This conclusion is similar
to our Theorem~\ref{thm: 1-1 reg ideals} below, but we consider
inclusions of the form $B\subseteq A$ having certain regularity
properties and a faithful invariant conditional expectation of $A$
onto $B$ instead of assuming that $A$ lies between $B$ and its
injective envelope.

One of the main results of the present paper is Theorem~\ref{lem: int prop
  quotient}, which shows that if $B\subseteq A$ is an inclusion having
a faithful invariant conditional expectation of $A$ onto $B$, then the
ideal intersection property is preserved under quotients by regular
ideals.  This is a far reaching generalization of the main result in
\cite{BFPR2021reg}, as many inclusions of interest, such as reduced
crossed products by discrete groups, come equipped with a faithful
invariant conditional expectation.  Along the way we give a detailed
analysis of regular ideals, culminating in Theorem~\ref{thm: 1-1 reg
  ideals}, which shows that if $B\subseteq A$ is a regular inclusion
with the ideal intersection property and $E: A\to B$ is a faithful
invariant conditional expectation, then $J\to J\cap B$ is a one-to-one
map from the regular ideals of $A$ to the regular ideals of $B$.

Establishing that an inclusion is a Cartan inclusion has become
important from the lens of classification.  Indeed, Barlak and Li
\cite[Theorem~1.1]{BarLi2017} show that when $A$ is a separable and
nuclear $C^*$-algebra containing the Cartan MASA $B$, then $A$
satisfies the UCT. We show in Theorem~\ref{thm: Cartan quotient} that
quotients of Cartan embeddings by regular ideals remain Cartan.

One of the interesting aspects of our work is that the key to many of
our results is the \textit{regular ideal intersection property}:
$J\cap B=\{0\}\implies J=\{0\}$ for all {\emph{regular}} ideals $J$ in
$A$.   In Section~\ref{RIIP and IIP}, we show that for a very
large class of examples, the \riip\ and the \iip\  are equivalent.

After we circulated an earlier version of this paper, Exel \cite{Exel} proved a generalization of Theorem~\ref{thm: 1-1 reg ideals} using weaker hypotheses than are assumed here.
In particular, Exel does not assume the existence of an invariant faithful conditional expectation. 
Instead Exel assumes that $B \subseteq A$ satisfies an invariance axiom;
see \cite{Exel} for full details.

This paper is organized as follows.  We begin with a short section of
preliminaries where we introduce regular ideals.  In Section~\ref{RI
  RE} we analyze the relationship between regular ideals of $B$ and
$A$ where $B\subset A$ is a regular inclusion.  This culminates in
Theorem~\ref{thm: 1-1 reg ideals}; this result gives settings in which
the Boolean algebras of regular ideals in $A$ are isomorphic to the
Boolean algebra of regular and invariant ideals in $B$.  In
Section~\ref{Q I}, we prove our main theorem, Theorem~\ref{lem: int
  prop quotient}, which shows that in the presence of a faithful,
invariant conditional expectation, the ideal intersection property is
inherited by quotients of regular ideals. We use this result to show
that the Cartan property passes to quotients by regular ideals.  We then specialize to \cstaralg s of exact
groupoids in Section~\ref{exact}. In particular, Theorem~\ref{cor: U
  reg} gives an explicit description of the regular ideals of the
(reduced) $C^*$-algebra of a twisted exact groupoid $G$ when
$C_0(G^{(0)})$ has the ideal intersection property.  Section~\ref{applications}
explores some applications of our work, including applications to
graph \cstar-algebras and reduced crossed products by discrete groups.
 In
Section~\ref{RIIP and IIP}, we give classes of regular inclusions for
which the regular ideal intersection property implies the ideal
intersection property.

\subsection*{Acknowledgments} We would like to thank the anonymous referee for their helpful feedback on an earlier draft of this work.
We also wish to thank Kang Li for pointing out an error in our original proof of Proposition~\ref{prop: int = E}.

%%%%%%%%%%%%%%%%%%%%%%%%%%%%%%%%%%%%%%%%%%%

\section{Preliminaries on regular ideals}
The focus of this paper is the family of  regular ideals in a \cstar-algebra.
We will recall the definition from \cite{Ham1982} after we introduce a piece of notation.
Let $A$ be a $\ca$-algebra and let $X \subseteq A$.
Denote by $X^\perp$ the set
$$ X^\perp = \{ a\in A \colon ax = xa = 0 \text{ for all }x\in X\}. $$
If $B$ is a $\ca$-algebra such that $X \subseteq B \subseteq A$ we will write $X^{\perp_B}$ to denote the set determined by the  operation restricted to $B$:
$$ X^{\perp_B} = X^{\perp} \cap B = \{ b\in B \colon bx = xb = 0 \text{ for all }x\in X\}. $$
We can now define what it means for an ideal to be \emph{regular}.

\begin{definition}
Let $A$ be a C$^*$-algebra. 
If $J \subseteq A$ is a subalgebra satisfying $aJ \cup Ja \subseteq A$ for all $a\in A$, we will call $J$ an \emph{algebraic ideal} of $A$.
If $J \subseteq A$ is a closed algebraic ideal we will call $J$ an \emph{ideal} of $A$.

We use the notation $J \unlhd A$ to say that $J \subseteq A$ is an ideal of $A$.
\end{definition}

\begin{definition}
An ideal $J \unlhd A$ is a \emph{regular ideal} in $A$ if $J = J^{\perp\perp}$.
\end{definition}

Hamana \cite{Ham1982} gives two characterizations of the regular ideals in a $\ca$-algebra $A$: one in terms of the topology on the primitive ideal space $\Prim(A)$, and one in terms the monotone closure $\ol{A}$ of $A$. We end this section with his  characterization involving the primitive ideal space, as we require it in the sequel.

Let $X$ be a topological space.
For an open  set $U \susbeteq X$, define 
$$ U^\perp = \interior{(X\backslash U)}. $$
That is, $U^\perp$ is the interior of the complement of $U$.
Recall that an open set $U \susbeteq X$ is \emph{regular} if $U = \interior{(\ol{U})}$, i.e. $U$ is the interior of its closure.
Equivalently, an open set $U$ is regular if $U = U^{\perp \perp}$.
The regular open sets of $X$ form a complete Boolean algebra with operations:
\begin{enumerate}
    \item $U \wedge V = U \cap V$;
    \item $U \vee V = (U \cup V) ^{\perp\perp}$; and
    \item $\lnot U = U^\perp;$
\end{enumerate}
see, e.g. \cite{HalBoolBook} or \cite{GivantHalmosBook}.

\begin{remark} The ideal $C_0(U)\subseteq C_0(X)$ is regular if and only if $U$ is a regular open set in $X$.
\end{remark}

Let $A$ be a \cstar-algebra and let $\Prim(A)$ be the primitive ideal space.
For $R \subseteq A$ define
$$ \hull(R) = \{P \in \Prim(A) \colon R \subseteq P\}, $$
and for $S \subset \Prim(A)$ define
$$ \ker(S) = \bigcap_{P\in S} P. $$
Endow $\Prim(A)$ with the usual hull-kernel topology.
That is, if $S \subseteq \Prim(A)$ then the closure $\ol{S}$ is given by $\ol{S} = \hull\ker (S).$
The map $I \mapsto \Prim(A)\backslash \hull(I)$ gives a one-to-one correspondence between ideals $I \unlhd A$ and open subsets of $\Prim(A)$ \cite[Theorem~5.2.7]{MurphyBook}.
When  this map is restricted to the regular ideals of $A$, Hamana
establishes the following result.

\begin{proposition}[{c.f.~\cite[Lemma~1.4]{Ham1982}}]\label{prop: bool}
Let $A$ be a \cstar-algebra.
Then the collection of all regular ideals in $A$ forms a complete Boolean algebra with meet, join and negation given by
\begin{enumerate}
	\item $J\wedge K := J \cap K$;
	\item $J\vee K : = \<J \cup K\>^{\perp\perp}$;
	\item $\lnot J := J^\perp$.
\end{enumerate}
Furthermore, the map $I \mapsto \Prim(A)\backslash \hull(I)$ on regular ideals of a \cstar-algebra $A$ gives a complete Boolean algebra isomorphism from the regular ideals of $A$ to the regular open sets of $\Prim(A)$.
\end{proposition}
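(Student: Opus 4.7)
The plan is to prove both statements at once by showing that the standard bijection $\Phi : I \mapsto U_I := \Prim(A) \setminus \hull(I)$ between (closed two-sided) ideals of $A$ and open subsets of $\Prim(A)$ intertwines the $\cstar$-algebraic perp operation with the topological perp operation. Once that is done, regularity transports across $\Phi$, the three Boolean operations on regular ideals match those on regular open sets, and both the Boolean algebra structure and completeness are inherited from the (previously cited) classical fact that the regular open sets of any topological space form a complete Boolean algebra.

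The heart of the matter is therefore the identity $U_{I^\perp} = (U_I)^\perp$. For this I would first record the elementary fact that for ideals $J, K \unlhd A$ one has $J \cap K = \{0\}$ if and only if $JK = \{0\}$: if $x \in J \cap K$ then $x^*x \in JK$ (using that $J$ is $*$-closed), while conversely $jk \in J \cap K$ for every $j \in J$ and $k \in K$ because $J$ is a right ideal and $K$ a left ideal. Combined with the observation that $I^\perp$ is itself a closed two-sided ideal (check closure, $*$-invariance, and two-sided absorption by $A$ using that $I$ is two-sided), this identifies $I^\perp$ as the largest ideal having trivial intersection with $I$. Since $\Phi$ is an order isomorphism that sends intersections of ideals to intersections of open sets, $U_{I^\perp}$ must be the largest open subset of $\Prim(A)$ disjoint from $U_I$, which is exactly $\operatorname{int}(\Prim(A) \setminus U_I) = (U_I)^\perp$.

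Everything else is now a formal transport. Iterating gives $U_{I^{\perp\perp}} = (U_I)^{\perp\perp}$, so $I$ is a regular ideal of $A$ if and only if $U_I$ is a regular open subset of $\Prim(A)$, and $\Phi$ restricts to a bijection between these two collections. The meet $J \cap K$ goes to $U_J \cap U_K$, the complement $J^\perp$ goes to $(U_J)^\perp$, and since the ideal generated by $J \cup K$ corresponds to the open set $U_J \cup U_K$ under the underlying lattice isomorphism, the join $\langle J \cup K \rangle^{\perp\perp}$ goes to $(U_J \cup U_K)^{\perp\perp}$; these are precisely the three Boolean operations on the regular open sets. Completeness follows because an arbitrary join $(\bigcup_\alpha U_{J_\alpha})^{\perp\perp}$ on the topological side is the image of $\langle \bigcup_\alpha J_\alpha \rangle^{\perp\perp}$ under $\Phi$. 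The main obstacle is establishing $U_{I^\perp} = (U_I)^\perp$; once this is in hand, every remaining assertion in the proposition falls out by transport of structure.
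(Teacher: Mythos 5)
Your argument is correct. Note that the paper offers no proof of this proposition at all---it is quoted directly from Hamana (\cite[Lemma~1.4]{Ham1982})---so there is no internal argument to compare against; your write-up is a sound, self-contained verification of the cited fact. The two load-bearing steps are both handled properly: the lemma that $J\cap K=\{0\}$ if and only if $JK=\{0\}$ (via $x^*x\in JK$ for $x\in J\cap K$) identifies $I^\perp$ as the largest ideal meeting $I$ trivially, and since the lattice isomorphism $I\mapsto U_I$ converts trivial intersection of ideals into disjointness of open sets and $\operatorname{int}\bigl(\Prim(A)\setminus U\bigr)$ is the largest open set disjoint from $U$, the identity $U_{I^\perp}=(U_I)^\perp$ follows; everything else is transport of structure from the complete Boolean algebra of regular open sets, exactly as you say.
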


%%%%%%%%%%%%%%%%%%%%%%%%%%%%%%%%%%%%%

\section{Regular ideals in regular \cstar-inclusions}\label{RI RE}

\begin{definition}
Let $A$ and $B$ be $\ca$-algebras such that $B \subseteq A$.
If $B$ contains an approximate unit for $A$, we say that $B \subseteq A$ is an \emph{inclusion of $\ca$-algebras.}
We will sometimes use the notation $(A,B)$ for an inclusion of \cstar-algebras (we write the larger algebra first).
\end{definition}

\begin{remark} \label{autoau} The condition that $B$ contain an approximate unit for
  $A$ is sometimes automatic.  For example, when $B$ is maximal abelian and
  $\spn \{n\in A: nBn^*\cup n^*Bn\subseteq B\}$ is dense in $A$,
  \cite[Theorem~2.6]{Pit2021normalizers} shows that every
  approximate unit for $B$ is an approximate unit for $A$.
\end{remark}

If $B\subseteq A$ is an inclusion of $\ca$-algebras, we want to know to what extent the regular ideals of $A$ determine the regular ideals of $B$, and vice versa.
In general, one should not expect any strong relationship.
However, under some additional hypotheses, we show 
in Theorem~\ref{thm: 1-1 reg ideals} that there is an injective map from the regular ideals of $A$ to the invariant regular ideals of $B$. We will use this map in our main results on quotients by regular ideals, Theorem~\ref{lem:  int prop quotient} and Theorem~\ref{thm:  Cartan quotient}.

We  use the following notation extensively.  For $I \subseteq A$ we denote the ideal generated by $I$ in $A$ by $\<I\>_A$.
If there is no chance of confusion as to where the ideal should lie we will simply write $\<I\>$ in place of $\<I\>_A$.

The following elementary facts will be useful.

\begin{lemma}\label{lem: perp reg}
If $J \subseteq A$ is an algebraic ideal, then $J^\perp \unlhd A$ is a regular ideal.
\end{lemma}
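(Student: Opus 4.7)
The plan is to split the statement into two independent pieces: first, that $J^\perp$ is a genuine (norm-closed, two-sided) ideal of $A$, and second, that it satisfies the regularity identity $J^\perp=(J^\perp)^{\perp\perp}$.

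For the first piece, norm-closedness of $J^\perp$ follows immediately from joint continuity of multiplication: if $a_n\to a$ with $a_n\in J^\perp$, then $aj=\lim a_nj=0$ and $ja=\lim ja_n=0$ for every $j\in J$. The two-sided ideal property is the only place where the hypothesis on $J$ is used. Given $a\in J^\perp$, $b\in A$, and $j\in J$, I would use that $J$ is itself a two-sided algebraic ideal (so both $bj$ and $jb$ lie in $J$) to compute
\[
(ab)j=a(bj)=0,\qquad j(ab)=(ja)b=0,
\]
and the symmetric identities $(ba)j=b(aj)=0$ and $j(ba)=(jb)a=0$, showing $ab,ba\in J^\perp$. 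Linearity is clear, so $J^\perp\unlhd A$.

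For regularity, the cleanest route is to establish, once and for all, the general identity $X^{\perp\perp\perp}=X^\perp$ valid for any subset $X\subseteq A$. This follows from two elementary observations: $X\subseteq X^{\perp\perp}$ always, and the map $Y\mapsto Y^\perp$ is inclusion-reversing. Applying the first fact to $X^\perp$ gives $X^\perp\subseteq(X^\perp)^{\perp\perp}=X^{\perp\perp\perp}$, while applying the second fact to the inclusion $X\subseteq X^{\perp\perp}$ yields the reverse inclusion $X^{\perp\perp\perp}\subseteq X^\perp$. Specializing to $X=J$ gives $(J^\perp)^{\perp\perp}=J^{\perp\perp\perp}=J^\perp$, which is precisely the regularity condition.

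There is no real obstacle here; the result is essentially the standard Galois-connection behavior of the annihilator operation, together with a routine verification that $J$ being a two-sided algebraic ideal forces its annihilator to be a two-sided ideal as well. The only minor subtlety is remembering to use the \emph{two-sidedness} of $J$ in the ideal verification above (one needs both $bj\in J$ and $jb\in J$).
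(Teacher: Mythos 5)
Your proof is correct; the paper states this lemma as an elementary fact and omits the argument entirely, and what you have written is precisely the standard verification it leaves to the reader (the annihilator of a two-sided ideal is a closed two-sided ideal, plus the Galois-connection identity $X^{\perp\perp\perp}=X^{\perp}$). Your remark about needing both $bj\in J$ and $jb\in J$ is the right thing to flag, since the two-sided annihilator of a merely one-sided ideal need not be an ideal.
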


\begin{lemma}\label{lem: reg inc}
Let $A$ be a \cstar-algebra.
Suppose $I,\ J \subseteq A$ are algebraic ideals satisfying $J^\perp \subseteq I.$
Then $J^\perp = I$ or $I \cap J \neq \{0\}.$
\end{lemma}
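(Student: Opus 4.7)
The plan is to argue by contrapositive: assume $J^\perp \neq I$ and use the strict containment $J^\perp \subsetneq I$ to manufacture a nonzero element of $I \cap J$.

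First I would pick some $a \in I \setminus J^\perp$. By the very definition of $J^\perp$, the failure of $a$ to lie in $J^\perp$ means there is some $j \in J$ with $aj \neq 0$ or $ja \neq 0$. I will handle the two cases symmetrically.

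Consider the case $aj \neq 0$. Since $I$ is a two-sided algebraic ideal and $a \in I$, the product $aj$ lies in $I$; since $J$ is a two-sided algebraic ideal and $j \in J$, the same product lies in $J$. Hence $0 \neq aj \in I \cap J$. The case $ja \neq 0$ is identical, with the roles of left and right multiplication swapped. In either case $I \cap J \neq \{0\}$, which is exactly what was needed.

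There is essentially no obstacle here: the proof is a one-line unpacking of the definition of $X^\perp$ combined with the two-sidedness of algebraic ideals. The only thing to be careful about is reading $J^\perp$ in the sense defined just before the statement, namely $\{a \in A : aj = ja = 0 \text{ for all } j \in J\}$, so that $a \notin J^\perp$ really does produce a single witness $j$ for which at least one of the two products is nonzero.
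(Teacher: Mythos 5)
Your proof is correct and follows essentially the same route as the paper's: pick $x \in I \setminus J^\perp$, extract a witness $j \in J$ with $xj \neq 0$ or $jx \neq 0$, and observe that the nonzero product lies in $I \cap J$ by two-sidedness of both ideals. You merely spell out the last step that the paper leaves implicit.
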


\begin{proof}
Suppose $J^\perp \neq I$.
Take $x \in I \backslash J^\perp$.
Since $x\notin J^\perp$, we have $xJ \neq \{0\}$ or $Jx \neq \{0\}$.
Hence $I \cap J \neq \{0\}$. 
\end{proof}

We now impose some natural extra hypotheses on the inclusions we
study.  We first show that if there is a faithful conditional
expectation from $A$ onto $B$ then $J\cap B$ is a regular ideal of $B$
whenever $J$ is a regular ideal of $A$.

\begin{lemma}\label{lem: cond perp}
Let $B \subseteq A$ be an inclusion of \cstar-algebras.
Suppose $E \colon A \rightarrow B$ is a faithful conditional expectation.
Then for any $J \unlhd A$ we have
$$J^\perp \cap B = E(J)^{\perp_B}. $$
\end{lemma}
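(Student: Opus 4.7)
The plan is to verify the two set inclusions separately, using only the defining properties of a faithful conditional expectation: the $B$-bimodule property ($E(b_1 a b_2)=b_1 E(a) b_2$ for $b_1,b_2\in B$, $a\in A$) and faithfulness on positive elements. Before starting, I would note that $E(J)$ is a two-sided algebraic ideal of $B$, because for $b\in B$ and $j\in J$ one has $bE(j)=E(bj)\in E(J)$ and $E(j)b=E(jb)\in E(J)$ since $J\unlhd A$; this makes $E(J)^{\perp_B}$ a meaningful object.

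For the inclusion $J^\perp\cap B \subseteq E(J)^{\perp_B}$, I would take $b\in J^\perp\cap B$ and observe that for any $j\in J$, $bE(j)=E(bj)=E(0)=0$ and $E(j)b=E(jb)=0$, so $b$ annihilates $E(J)$ from both sides.

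The reverse inclusion is the slightly more interesting direction, and it is where faithfulness enters. Given $b\in E(J)^{\perp_B}$ and $j\in J$, the strategy is to apply $E$ to the positive elements $(bj)(bj)^*$ and $(jb)^*(jb)$. Since $J$ is two-sided, $jj^*$ and $j^*j$ both lie in $J$, so $E(jj^*), E(j^*j)\in E(J)$, and the hypothesis on $b$ forces $bE(jj^*)=E(j^*j)b=0$. Pulling the $b$'s outside via the bimodule property then gives
\[E\bigl((bj)(bj)^*\bigr)=bE(jj^*)b^*=0 \qand E\bigl((jb)^*(jb)\bigr)=b^*E(j^*j)b=0,\]
and faithfulness of $E$ on positive elements yields $bj=0$ and $jb=0$. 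Hence $b\in J^\perp\cap B$.

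There is no real obstacle here; the only delicate point is choosing the right positive element so that the outer $b$'s can be pulled through $E$ and the inner factor from $J$ can be identified with an element of $E(J)$. No approximation or closure argument is required, and the proof does not use that $J$ is norm-closed, only that it is a two-sided algebraic ideal.
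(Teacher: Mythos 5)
Your proof is correct and follows essentially the same route as the paper's: the easy inclusion $J^\perp\cap B\subseteq E(J)^{\perp_B}$ via the bimodule property, and the reverse inclusion by applying $E$ to a positive element of the form $x x^*$ (the paper uses $E(d^*a^*ad)=d^*E(a^*a)d$, you use $E(bjj^*b^*)=bE(jj^*)b^*$, which is the same computation up to relabeling) and invoking faithfulness. Your closing observation that only the algebraic ideal property of $J$ is used matches the remark the authors make immediately after the lemma.
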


\begin{proof}
Take $d \in E(J)^{\perp_B}$ and $a \in J$.
Then
$$ E(d^*a^*ad) = d^* E(a^*a) d =0. $$
Since $E$ is faithful, it follows that $ad=0$.
Similarly $da=0$.
Hence $d \in J^\perp$.

Suppose now that $d \in J^{\perp}\cap B$ and $a \in J$.
Then $dE(a) = E(da) = 0$, and $E(a)d=E(ad) = 0$.
Hence $d \in E(J)^{\perp_B}.$
\end{proof}

\begin{remark}
While Lemma~\ref{lem: cond perp} is stated for an ideal $J \unlhd A$, the conclusion holds for any subset $J \subseteq A$.
\end{remark}

\begin{proposition}\label{prop: int reg}
Let $B \subseteq A$ be an inclusion of \cstar-algebras.
Suppose $E \colon A \rightarrow B$ is a faithful conditional expectation.
If $J\unlhd A$ is a regular ideal of $A$, then $J\cap B$ is a regular ideal of $B$.
\end{proposition}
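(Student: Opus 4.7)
The inclusion $J \cap B \subseteq (J \cap B)^{\perp_B \perp_B}$ is automatic from the definition of the double annihilator, so the content of the proposition lies in the reverse containment. My plan is to take an arbitrary $b \in (J \cap B)^{\perp_B \perp_B}$ and use the regularity assumption $J = J^{\perp\perp}$ to reduce the goal $b \in J$ to showing that $ab = ba = 0$ for every $a \in J^\perp$.

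The bridge between the hypothesis on $b$ and elements $a \in J^\perp$ will be the positive element $E(a^* a) \in B$. The first step is to verify that $E(a^* a) \in (J \cap B)^{\perp_B}$: for any $j \in J \cap B$, the bimodule property of $E$ combined with $aj = 0$ and $aj^* = 0$ gives $E(a^* a)\,j = E(a^*(aj)) = 0$ and $j\,E(a^* a) = E((aj^*)^* a) = 0$. Applying the assumption $b \in (J \cap B)^{\perp_B \perp_B}$ then yields $b E(a^* a) = E(a^* a) b = 0$.

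The second step extracts the desired annihilations $ab = 0$ and $ba = 0$ by sandwiching and invoking faithfulness of $E$. From $E(a^* a) b = 0$ one gets $b^* E(a^* a) b = E((ab)^*(ab)) = 0$, and faithfulness then forces $ab = 0$. Similarly, $b E(a^* a) b^* = E((ab^*)^*(ab^*)) = 0$ yields $ab^* = 0$; since $J^\perp$ is self-adjoint, taking adjoints and replacing $a$ by $a^*$ produces $ba = 0$. Hence $b$ lies in $(J^\perp)^\perp = J^{\perp\perp} = J$, so $b \in J \cap B$, completing the argument.

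The main obstacle, as I see it, is spotting the right test element in $(J \cap B)^{\perp_B}$ to which the annihilation property of $b$ can be applied; once one identifies $E(a^*a)$ for $a \in J^\perp$ as a supply of such test elements, the remainder is a short positivity-and-faithfulness calculation, and the use of regularity enters only at the very last step to convert $J^{\perp\perp}$ back into $J$.
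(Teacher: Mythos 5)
Your argument is correct, and every step checks out: the test elements $E(a^*a)$ for $a\in J^\perp$ do lie in $(J\cap B)^{\perp_B}$ by the bimodule property of $E$, and the sandwiching $b^*E(a^*a)b=E((ab)^*(ab))$ together with faithfulness legitimately forces $ab=0$ (and similarly $ba=0$ after passing to adjoints). The organization, however, differs from the paper's. The paper first isolates the identity $J^\perp\cap B=E(J)^{\perp_B}$ as Lemma~\ref{lem: cond perp}, applies it to $J^\perp$ to get the explicit formula $J\cap B=J^{\perp\perp}\cap B=E(J^\perp)^{\perp_B}$, and then concludes by the general principle (Lemma~\ref{lem: perp reg}) that any annihilator is a regular ideal; your proof instead verifies the containment $(J\cap B)^{\perp_B\perp_B}\subseteq J\cap B$ element by element, never exhibiting $J\cap B$ as an annihilator. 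The computational engine is the same in both cases --- the two faithfulness calculations you perform are essentially the two halves of the proof of Lemma~\ref{lem: cond perp} --- so this is a repackaging rather than a new idea. What the paper's route buys is the reusable identity $J\cap B=E(J^\perp)^{\perp_B}$, which is invoked again later (e.g.\ in establishing $J_K^\perp=J_{K^{\perp_B}}$ and in Proposition~\ref{prop: int = E}); what your route buys is a self-contained proof that does not require stating the lemma separately.
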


\begin{proof}
Let $J \unlhd A$ be a regular ideal.
Then, by regularity of $J$ and Lemma~\ref{lem: cond perp}, we get
$$ J \cap B = J^{\perp\perp} \cap B = E(J^\perp)^{\perp_B}. $$
It follows now by Lemma~\ref{lem: perp reg} that $J\cap B$ is a regular ideal in $B$.
\end{proof}

We recall the following definitions.

\begin{definition}
Let $B \subseteq A$ be an inclusion of $\ca$-algebras.
An element $n \in A$ is a \emph{normalizer of $B$} if $nBn^* \subseteq B$ and $n^*Bn \subseteq B$.
We denote the set of all normalizers of $B$ in $A$ by $\N(A,B)$.
If
$$ A = \ol{\spn}\N(A,B)$$
then we say $B$ is \emph{regular in $A$}, or $B \subseteq A$ is a \emph{regular inclusion of $\ca$-algebras}.
\end{definition}

\begin{remark}
We are now using the term {\em regular} in two different senses: one
refers to ideals and the other to inclusions. The context
will make clear which sense we intend.  This unfortunate development is
the result of historical naming conventions. In the first sense, {\em
  regular ideal}, the term goes back to at least Hamana
\cite{Ham1982}, and  seems to be derived from the notion of regular
open sets in topology.  The second, {\em regular inclusion}, goes back
to at least to  both Vershik's and Feldman and Moore's early work on
Cartan inclusions of von Neumann algebras, see~\cite{VersikNoDeOrAlOp}
and~\cite{FeldmanMooreErEqReII}.
\end{remark}

\begin{remark}
Let $B \susbeteq A$ be an inclusion of $\ca$-algebras.
Note that if $n\in \N(A,B)$, then $n^*n \in B$.
This follows from the assumption that there is an approximate unit for $B$ that also an approximate unit for $A$.
\end{remark}

\begin{definition}\label{def invariant}
Let $B \subseteq A$ be a regular inclusion of $\ca$-algebras.
Let $N \subseteq \N(A,B)$ be a $*$-semigroup with dense span in $A$.
If $K \unlhd B$ we say that
$K$ is an \emph{$N$-invariant ideal} if $nKn^* \subseteq K$ for all
$n\in N$.
When $N = \N(A,B)$ we will simply say that $K$ is an \emph{invariant ideal}.
\end{definition}

\begin{remark}\label{rem: inv ideals} If $K=I\cap B$ for some ideal $I$ in $A$ then $K$ is invariant:  indeed, for a normalizer $n$, $n In^*\subseteq I$ since $I$ is an ideal and $nBn^*\subseteq B$ since $n$ is a normalizer.

\end{remark}

\begin{definition}\label{def invariant 2}
Let $B \subseteq A$ be a regular inclusion of $\ca$-algebras.
Let $N \subseteq \N(A,B)$ be a $*$-semigroup with dense span in $A$.
If $E \colon A \rightarrow B$ is a conditional expectation, we say that $E$ is an \emph{$N$-invariant conditional expectation} if $nE(a)n^* = E(nan^*)$ for all $a \in A$ and  $n \in N$.
When $E$ is $\N(A,B)$-invariant we will simply say that $E$ is an \emph{invariant conditional expectation}.
\end{definition}

We have introduced $N$-invariance for ideals and conditional expectations because in some settings it is unclear whether a given ideal or conditional expectation is invariant under the entire $*$-semigroup of normalizers.  Nevertheless, in many situations, there is a natural $*$-semigroup $N$ of normalizers such that $\spn N$ is dense in $A$, under which an ideal or conditional expectation is invariant.
Corollary~\ref{cor: N-invariant is invariant} will show that when there is an $N$-invariant faithful conditional expectation, the $N$-invariant ideals will necessarily be invariant.
This occurs in the crossed product setting.

\begin{example}\label{ex: crossed prod}
Let $\Gamma$ be a discrete group acting via $\alpha$ on a \cstaralg\ $B$ and let $A=B\rtimes_r\Gamma$.
Let $N = \{b \delta_s \colon b \in B,\ s \in \Gamma\}$.
Then $N$ is a $*$-semigroup of normalizers with dense span in $B \rtimes_r \Gamma$.
An ideal $K$ in $B$ is $N$-invariant in the sense of Definition~\ref{def invariant} if and only if $\alpha_s(K)=K$ for all $s\in \Gamma$.

The map $E \colon \sum_{i=1}^k b_{s_i} \delta_{s_i} \mapsto b_e \delta_{e}$ extends to a faithful conditional expectation from $B\rtimes_r\Gamma$ to $B$, \cite[Proposition~4.1.9]{BrownOzawa}.
The conditional expectation $E$ is $N$-invariant.
\end{example}

Let us examine the notion of faithful, invariant
  conditional expectations for some more classes of examples.
  
   \begin{examples}\label{CEexs}
  \begin{enumerate}
    \item \label{CEexs1}
    Suppose $A$ is a unital \cstar-algebra and $B=\bC I$.   Then
  $\N(A,B)$ is the collection of all scalar multiples of unitary
  operators in $A$, so $(A,B)$ is a regular inclusion.   Here, the
  collection of all conditional
  expectations may be identified with the state space of $A$:  for a  state
  $\tau$ on
  $A$, the map $E_\tau$ given by $E_\tau(a)=\tau(a) I$ is a
  conditional expectation.    Note that $E_\tau$ is faithful
  precisely when $\tau$ is a faithful; and $E_\tau$ is invariant precisely when $\tau$ is tracial.
  Thus the existence
  of a faithful or invariant expectation is not guaranteed.  In this setting,
  there are many  conditional expectations, and there is no connection
  between  faithfulness and invariance.

\item \label{CEexs2} Suppose $B\subseteq A$ is a regular inclusion with $B$ maximal
  abelian in $A$.
\begin{enumerate}
  \item \label{CEexs2a} The existence of a conditional expectation
  is not automatic~\cite[Example~1.2]{Pit2017}.
\item \label{CEexs2b} When a conditional expectation $E: A\rightarrow B$, exists, it
  is always unique and invariant.  If $A$ is unital, uniqueness
  follows from~\cite[Theorem~3.5]{Pit2017} and invariance
  from~\cite[Proposition~3.14]{Pit2017}.  When $A$ is not unital,
  consider the unitizations, $\tilde B\subseteq \tilde A$.  By
  \cite[Corollary 2.4(i)]{Pit2021normalizers}, $\N(A,B)\subseteq \N(\tilde\A,\tilde B)$.  Thus,
  $\tilde B\subseteq \tilde A$ is a regular inclusion, and $\tilde B$ is maximal abelian in $\tilde A$.  The
  unitization $\tilde E$ of $E$ (that is,
  $A\times \bbC\ni (a,\lambda)\mapsto (E(a),\lambda)\in \tilde B$) is
  therefore an conditional expectation which is invariant and unique.
  Since $\N(A,B)\subseteq \N(\tilde\A,\tilde B)$, it follows that $E$
  is also invariant and unique.

\item \label{CEexs2c} It is possible to construct a discrete dynamical
  system $(\Gamma, X,\alpha)$ with $\Gamma$ acting freely on a
  compact Hausdorff space $X$ where the full and reduced crossed
  products of $C(X)$ by $\Gamma$ differ; in this case, $C(X)$ is
  maximal abelian in the full crossed product $C(X)\rtimes_f \Gamma$,
  yet the conditional expectation onto $C(X)$ is not faithful,
  \cite[Theorem~3.1 and Remark~3.2]{ExePitZar2021}.
\end{enumerate}
           \end{enumerate}
\end{examples}

\begin{definition}
Let $B \subseteq A$ be an inclusion of $\ca$-algebras.
We say that $B$ has the \emph{ideal intersection property in $A$} if whenever $J \unlhd A$ is a non-zero ideal, $J\cap B \neq \{0\}$.
We say that $B$ has the \emph{regular ideal intersection property in
  $A$} if whenever $J \unlhd A$ is a non-zero regular ideal, $J \cap B
\neq \{0\}$.  We will also 
say that the inclusion $(A,B)$
has the \iip\ or the \riip.
\end{definition}

Other authors use different terminology for the ideal intersection
property.  For example, an inclusion with the ideal intersection
property is called \textit{\cstar-essential} in 
\cite{PitZar2015} and is said to \textit{detect ideals} in 
\cite{KwaMey2021}.  To the best of our knowledge,
the regular ideal intersection property has not appeared elsewhere.

\begin{remark}
It is clear that if $B$ has the ideal intersection property in $A$ then it has the regular ideal intersection property.
We will show in Section~\ref{RIIP and IIP} that for  a large class of examples the regular ideal intersection property implies the ideal intersection property.
Example~\ref{ex: riip neq iip} gives an example of a regular inclusion $B \subset A$ with the regular ideal intersection property but not the ideal intersection property.
\end{remark}

\begin{notation}\label{not: J_K}
Let $B \subseteq A$ be a regular inclusion of $\ca$-algebras with a faithful conditional expectation $E \colon A \rightarrow B$.
%Let  $N$ be a $*$-semigroup of normalizers with dense span in $A$.
If $K \unlhd B$ is an ideal, denote by $J_K$ the set
$$ J_K = \{ a \in A \colon E(a^*a)\in K\}. $$
\end{notation}

Our aim in the remainder of this section is to prove Theorem~\ref{thm: 1-1 reg
  ideals}, which shows that when the regular inclusion $(A,B)$ has the regular ideal intersection property and $E$ is faithful and invariant, then $K \mapsto J_K$ defines a bijection between invariant regular ideals of $B$ and regular ideals of $A$.
The following  gives a monomorphism of the invariant regular ideals of $B$
into the regular ideals of $A$.  

\begin{proposition}\label{prop: reg small to big}
Let $B \subseteq A$ be a regular inclusion of $\ca$-algebras, let  $N$ be a \mbox{$*$-semigroup} of normalizers with dense span in $A$, and let
$E \colon A \rightarrow B$ be a faithful $N$-invariant conditional
expectation.  The
following statements hold.
\begin{enumerate}
  \item  If $K \unlhd B$ is an $N$-invariant ideal, then $J_K$ is an ideal
    in $A$ such that \[J_K \cap B = K\subseteq E(J_K).\]

    \item If  $K \unlhd B$ is an invariant regular ideal, then $J_K$ is a regular ideal satisfying
$$ J_K \cap B = K = E(J_K) \dstext{and} J_K ^\perp = J_{K^{\perp_B}}.$$
\end{enumerate}
\end{proposition}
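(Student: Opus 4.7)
The plan is to verify part (i) directly from the definition of $J_K$, and then leverage (i) together with an auxiliary invariance lemma for $K^{\perp_B}$ to deduce part (ii) via a clean orthogonality argument resting on the identity $J_{K_1} \cap J_{K_2} = J_{K_1 \cap K_2}$ for ideals $K_1,K_2 \unlhd B$.

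For (i), I would first check that $J_K$ is a closed linear subspace using $(a+b)^*(a+b) \leq 2(a^*a + b^*b)$, hereditariness of the closed ideal $K \unlhd B$, and continuity of $E$. To see that $J_K$ is two-sided, by density of $\spn N$ in $A$ and closedness of $J_K$ it suffices to verify $an, na \in J_K$ whenever $a \in J_K$ and $n \in N$. For $an$, $N$-invariance of $E$ (applied to $n^* \in N$, since $N$ is a $*$-semigroup) gives
\[
E((an)^*(an)) = E(n^*a^*an) = n^*E(a^*a)n \in n^*Kn \subseteq K
\]
by $N$-invariance of $K$; for $na$, the operator inequality $a^*(n^*n)a \leq \|n\|^2 a^*a$ and hereditariness place $E((na)^*(na))$ in $K$. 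The equality $J_K \cap B = K$ reduces to the fact that $y^*y \in K \iff y \in K$ for a closed ideal, while $K \subseteq E(J_K)$ is immediate since $E$ fixes $B$.

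The technical heart of (ii) is the auxiliary lemma that $K^{\perp_B}$ inherits $N$-invariance from $K$. For positive $b \in K^{\perp_B}$, positive $k \in K$, and $n \in N$, iterated application of the $C^*$-identity rewrites
\[
\|(nbn^*)k\|^2 = \|k n c n^* k\| = \|c^{1/2}(n^*k^2n)c^{1/2}\|,
\]
where $c := b(n^*n)b \in K^{\perp_B}$ (because $K^{\perp_B}$ is an ideal of $B$) and $n^*k^2n \in K$ (by $N$-invariance). Factoring once more as a product of positive square roots $k''^{1/2} \in K$ and $c^{1/2} \in K^{\perp_B}$ shows the quantity vanishes. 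This computation is the chief obstacle, but is purely algebraic and does not involve $E$.

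With the lemma in hand, part (i) applies to $K^{\perp_B}$ as well, so $J_{K^{\perp_B}}$ is an ideal. To prove $J_K^\perp = J_{K^{\perp_B}}$: if $a \in J_K^\perp$ then in particular $aK = Ka = 0$, and the $B$-bimodule property of $E$ yields $E(a^*a)k = E(a^*(ak)) = 0$ and $kE(a^*a)=0$ for all $k \in K$, so $E(a^*a) \in K^{\perp_B}$. Conversely, for $a \in J_{K^{\perp_B}}$ and $x \in J_K$, the product $ax$ lies in both ideals, so $E((ax)^*(ax)) \in K \cap K^{\perp_B} = \{0\}$, and faithfulness of $E$ forces $ax = 0$ (and $xa = 0$ symmetrically). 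Applying this identity with $K$ replaced by the $N$-invariant regular ideal $K^{\perp_B}$ and using $(K^{\perp_B})^{\perp_B} = K$ gives $J_K^{\perp\perp} = J_{K^{\perp_B}}^\perp = J_K$, so $J_K$ is regular. Finally, the inclusion $E(J_K) \subseteq K$ follows from Cauchy--Schwarz, $E(x)^*E(x) \leq E(x^*x) \in K$, combined with the same closed-ideal fact, upgrading $K \subseteq E(J_K)$ from (i) to equality.
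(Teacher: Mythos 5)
Your proof is correct and follows the same overall skeleton as the paper's: part (i) is handled essentially identically, and part (ii) rests on the same key device of pairing $J_K$ against $J_{K^{\perp_B}}$ and showing the two ideals annihilate one another. Two of your subarguments genuinely differ, both to your advantage. First, you obtain $E(J_K)\subseteq K$ directly from the Schwarz inequality $E(x)^*E(x)\le E(x^*x)$ together with hereditariness of $K$ and the fact that $y^*y\in K$ forces $y\in K$; the paper instead argues by contradiction through Lemma~\ref{lem: reg inc}, using regularity of $K$ to manufacture a nonzero element of $E(J_K)\cap K^{\perp_B}$. Your version is shorter and shows the equality $E(J_K)=K$ for every $N$-invariant closed ideal, regular or not. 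Second, you isolate the $N$-invariance of $K^{\perp_B}$ as an explicit lemma. The paper never states this, but it is genuinely needed: to pass from $J_K=J_{K^{\perp_B}}^{\perp}$ to the asserted identity $J_K^{\perp}=J_{K^{\perp_B}}$ one must run the argument again with $K^{\perp_B}$ in place of $K$, which requires $J_{K^{\perp_B}}$ to be a two-sided ideal and hence $K^{\perp_B}$ to be invariant; so your lemma fills a real (if small) gap. (A slightly shorter route to it: for $b\in K^{\perp_B}$, $k\in K$ and $n\in N$, one has $(nbn^*k)(nbn^*k)^*=nb(n^*kk^*n)b^*n^*=0$ because $n^*kk^*n\in K$; your norm computation is equally valid.) The only cosmetic remark is that the identity $J_{K_1}\cap J_{K_2}=J_{K_1\cap K_2}$ announced in your plan is never actually needed beyond the trivial observation that $K\cap K^{\perp_B}=\{0\}$.
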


\begin{proof}
(i) Let $K \unlhd B$ be an $N$-invariant ideal.
Take $a \in J_K$ and $b \in A$.
Then $a^*b^*ba\leq \|b\|^2a^*a$ and hence 
$$ E((ba)^*ba)=E(a^*b^*ba) \leq \|b\|^2 E(a^*a) \in K $$
since conditional expectations are (completely) positive maps.  As
ideals in $\ca$-algebras are hereditary it follows that
$E((ab)^*(ab)) \in K$ and hence $ab \in J_K$.  Thus $J_K$ is a
left-ideal of $A$.  Now take any $a \in J_K$ and any $n \in N$.  Then $E((an)^*(an)) = E(n^*a^*an) = n^*E(a^*a)n \in K$, as $E$
is an $N$-invariant conditional expectation and $K$ is an $N$-invariant ideal.
Hence $an \in J_K$.
As the span of $N$ is dense in $A$, it follows that $J_K$ is
also a right ideal.  That $J_K \cap B = K \subseteq E(J_K)$ follows
from the definition, so (i) holds.

(ii) Now suppose that $K \unlhd B$ is an $N$-invariant regular ideal.
Note that $E(J_K)$ is an algebraic ideal in $B$ since $J_K$ is an ideal in $A$.
Let us show $K=E(J_K)$.

If $K \neq E(J_K)$ then, by Lemma~\ref{lem: reg inc}, $E(J_K) \cap K^{\perp_B} \neq \{0\}$.
Hence there is an $0\neq a \in J_K$ such that $E(a) \in K^{\perp_B}$.
Thus
$$ K \ni E(a^*a) \geq E(a)^*E(a) \in K^{\perp_B}. $$
Since $K$ is hereditary,  $E(a)^*E(a) \in K \cap K^{\perp_B}$ and 
hence $E(a) = 0$, contrary to choice of $a$.
Thus  $E(J_K) = K$.

It remains to show that $J_K$ is a regular ideal.
Let $J_{K^\perp} := \{a\in A: E(a^*a)\in K^{\perp_B}\}$.
It suffices to show that $J_K = J_{K^\perp}^\perp$.
Take any $a \in J_K$ and $b \in J_{K^\perp}$. Since $J_K$ is a right ideal and $J_{K^\perp}$ is a left ideal, 
 $E((ab)^*ab) \in K \cap K^{\perp_B} = \{0\}$.
Since $E$ is faithful it follows that $ab = 0$.
Similarly, $ba = 0$.
Hence $J_K \subseteq J^{\perp}_{K^\perp}$.

Now take any $a \in J_{K^\perp}^\perp$ and $b \in K^{\perp_B} \subseteq J_{K^\perp}$.
Then $E(a)b = E(ab) = 0$.
Thus $E(J_{K^\perp}^\perp) \subseteq (K^{\perp_B})^{ \perp_B}=K$; that is, $J_{K^\perp}^\perp \subseteq J_K$.
Combining the two inclusions we get $J_K = J_{K^\perp}^\perp$ and hence $J_K$ is a regular ideal.
\end{proof}
%%%%%%%%%%%%%%%%%%%%%%%%%%%%%

Let $B \subseteq A$ be a regular inclusion of $\ca$-algebras, let  $N$ be a \mbox{$*$-semigroup} of normalizers with dense span in $A$.
The following corollary shows that, in the presence of an $N$-invariant faithful conditional expectation $E \colon A \rightarrow B$, the $N$-invariant ideals of $B$ and the $\N(A,B)$-invariant ideals of $B$ coincide. 

\begin{corollary}\label{cor: N-invariant is invariant}
Let $B \subseteq A$ be a regular inclusion of $\ca$-algebras, let  $N$ be a \mbox{$*$-semigroup} of normalizers with dense span in $A$, and let
$E \colon A \rightarrow B$ be a faithful $N$-invariant conditional
expectation.
An ideal $K \unlhd B$ is $N$-invariant if and only if it is invariant.
\end{corollary}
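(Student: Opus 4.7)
The forward implication is immediate since $N \subseteq \N(A,B)$: any ideal invariant under the larger semigroup is invariant under the smaller one. So the content lies in showing that $N$-invariance is enough to force $\N(A,B)$-invariance.

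The plan is to route an arbitrary $N$-invariant ideal $K \unlhd B$ through an ideal of the larger algebra $A$ and exploit the automatic $\N(A,B)$-invariance of contractions of such ideals to $B$. Specifically, suppose $K \unlhd B$ is $N$-invariant. Applying Proposition~\ref{prop: reg small to big}(i) produces the ideal $J_K \unlhd A$ satisfying $J_K \cap B = K$. Now Remark~\ref{rem: inv ideals} observes that for any ideal $I \unlhd A$ and any $m \in \N(A,B)$ one has $m(I \cap B)m^* \subseteq mIm^* \cap mBm^* \subseteq I \cap B$, so $I \cap B$ is automatically $\N(A,B)$-invariant. Taking $I = J_K$ gives $mKm^* \subseteq K$ for every $m \in \N(A,B)$, which is exactly invariance of $K$.

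I do not expect any genuine obstacle here; the entire point of Proposition~\ref{prop: reg small to big}(i) is to manufacture, from the weaker hypothesis of $N$-invariance, an ambient ideal in $A$ whose intersection with $B$ recovers $K$, and once this is available the conclusion is formal. The only thing one should verify in passing is that the hypotheses of Proposition~\ref{prop: reg small to big}(i) are met by the given data: a regular inclusion $B \subseteq A$, a $*$-semigroup $N$ of normalizers with dense span in $A$, and a faithful $N$-invariant conditional expectation $E : A \to B$; these are precisely the standing assumptions of the corollary, so the application is direct.
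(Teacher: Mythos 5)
Your proof is correct and is essentially identical to the paper's: both directions are handled the same way, with the nontrivial implication obtained by applying Proposition~\ref{prop: reg small to big}(i) to get $K = J_K \cap B$ and then invoking Remark~\ref{rem: inv ideals}.
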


\begin{proof}
Any invariant ideal of $B$ is $N$-invariant.
Suppose that $K \unlhd B$ is $N$-invariant.
By Proposition~\ref{prop: reg small to big}~(i), $K = J_K \cap B$.
Hence $K$ is an invariant ideal by Remark~\ref{rem: inv ideals}.
\end{proof}

We now return to our study of regular ideals.

\begin{corollary}\label{cor: reg small to big}
  Let $B \subseteq A$ be a regular inclusion of $\ca$-algebras,  $N$ be a $*$-semigroup of normalizers with dense span in $A$, and let
  $E \colon A \rightarrow B$ be a faithful $N$-invariant conditional
  expectation.  Further suppose that $B$ has the regular ideal
  intersection property in $A$.  If $K \unlhd B$ is an invariant
  regular ideal, then $\<K\>_A^{\perp\perp} = J_K$.
\end{corollary}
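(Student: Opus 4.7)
The plan is to prove $\langle K\rangle_A^{\perp\perp}\subseteq J_K$ and $J_K\subseteq \langle K\rangle_A^{\perp\perp}$ separately, using the regular ideal intersection property for the harder direction.

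The first inclusion comes almost for free from the previous proposition. By Proposition~\ref{prop: reg small to big}(ii), $J_K$ is a regular ideal of $A$ and $K=J_K\cap B$, so $\langle K\rangle_A\subseteq J_K$. Since the double-perp operation is monotone, $\langle K\rangle_A^{\perp\perp}\subseteq J_K^{\perp\perp}=J_K$.

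For the reverse inclusion, set $I:=\langle K\rangle_A^{\perp\perp}$, which is a regular ideal of $A$ by Lemma~\ref{lem: perp reg}. My strategy is to establish $J_K\cap I^\perp=\{0\}$ using the regular ideal intersection property; once this is known, for any $a\in J_K$ and $b\in I^\perp$ the product $ab$ lies in the ideal $J_K\cap I^\perp=\{0\}$, and similarly $ba=0$, so $a\in(I^\perp)^\perp=I^{\perp\perp}=I$. To apply the regular ideal intersection property I first observe that $J_K\cap I^\perp$ is a regular ideal of $A$, being the meet in the Boolean algebra of Proposition~\ref{prop: bool} of the two regular ideals $J_K$ and $I^\perp$. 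Then I compute
\[(J_K\cap I^\perp)\cap B=(J_K\cap B)\cap(I^\perp\cap B)=K\cap(I^\perp\cap B).\]
Since $K\subseteq I$, we have $I^\perp\subseteq K^{\perp_A}$, and hence $I^\perp\cap B\subseteq K^{\perp_B}$; therefore $(J_K\cap I^\perp)\cap B\subseteq K\cap K^{\perp_B}=\{0\}$ (using that $K^{\perp_B}$ is self-adjoint, so any $k$ in the intersection satisfies $kk^*=0$, forcing $k=0$).

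The only genuinely substantive step is recognizing that the ideal to test against the regular ideal intersection property should be $J_K\cap I^\perp$; once this choice is identified, everything else is bookkeeping with the Boolean algebra operations on regular ideals together with the identity $J_K\cap B=K$ inherited from Proposition~\ref{prop: reg small to big}.
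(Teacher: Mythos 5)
Your proof is correct and follows essentially the same route as the paper: the easy inclusion comes from Proposition~\ref{prop: reg small to big}(ii), and the reverse inclusion is obtained by applying the regular ideal intersection property to the regular ideal $J_K\cap\<K\>_A^\perp$ (your $J_K\cap I^\perp$ is literally this ideal, since $\<K\>_A^{\perp\perp\perp}=\<K\>_A^\perp$), whose intersection with $B$ is shown to vanish because it lands in $K\cap K^{\perp_B}$. The only difference is that you spell out the final step deducing $J_K\subseteq\<K\>_A^{\perp\perp}$ from $J_K\cap\<K\>_A^\perp=\{0\}$, which the paper leaves implicit.
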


\begin{proof}
Since $\innerprod{K}_A^\dperp\idealin A$ is the smallest regular ideal
containing $K$, Proposition~\ref{prop: reg small to big}(ii) gives
$\innerprod{K}_A^\dperp \subseteq J_K$.   Let $L:=J_K\cap
\innerprod{K}_A^\perp$ and let $b\in L\cap B$.   By
Proposition~\ref{prop: reg small to big}, $J_K\cap B=K$, so $b\in K$.
But $bK=0$ because $b\in \innerprod{K}_A^\perp$.  Thus $b=0$, so
$L\cap B=\{0\}$.  
By Proposition~\ref{prop: bool}, $L$ is a regular ideal in $A$.
Since $(A,B)$ has
the \riip,  we conclude that $L=\{0\}$, that is,
$\innerprod{K}_A^\dperp =J_K$.
\end{proof}

\begin{proposition}\label{prop: int = E}
Let $B \subseteq A$ be a regular inclusion, where $B$ has the regular ideal intersection property in $A$. 
Let $E \colon A \rightarrow B$ be a faithful conditional expectation.
If $J \unlhd A$ is a regular ideal, then $J \cap B = E(J)$.
\end{proposition}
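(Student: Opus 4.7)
The containment $J\cap B \subseteq E(J)$ is immediate from $E|_B = \mathrm{id}$: for $b \in J\cap B$, $b = E(b) \in E(J)$. The work lies in the reverse containment $E(J)\subseteq J\cap B$.

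The plan is to trap $E(J)$ and $J\cap B$ inside the same regular $B$-ideal. By Lemma~\ref{lem: cond perp} applied to $J$ we have $J^\perp\cap B = E(J)^{\perp_B}$. Combining this with the $B$-bimodularity of $E$: for any $a \in J$ and $c \in J^\perp\cap B$, the relations $ac = ca = 0$ give $E(a)c = E(ac) = 0$ and $cE(a) = E(ca) = 0$, so $E(J) \subseteq (J^\perp\cap B)^{\perp_B}$. Proposition~\ref{prop: int reg} tells us $J\cap B$ is itself a regular ideal of $B$, and clearly $J\cap B \subseteq (J^\perp\cap B)^{\perp_B}$; hence the proposition will follow once the reverse inclusion $(J^\perp\cap B)^{\perp_B}\subseteq J$ is established.

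To prove that inclusion, fix $b\in (J^\perp\cap B)^{\perp_B}$ and let $L := \<b\>_A^{\perp\perp}$, the smallest regular ideal of $A$ containing $b$. It suffices to show $L\subseteq J$, for then $b\in L\cap B\subseteq J\cap B$. Since $J$ is regular, $L\subseteq J$ is equivalent to $L\cap J^\perp = \{0\}$; as $L\cap J^\perp$ is a regular ideal of $A$ by Proposition~\ref{prop: bool}, the \riip\ reduces the task to verifying $L\cap J^\perp\cap B = \{0\}$. Take $c$ in this triple intersection. From $c \in J^\perp\cap B$ and the defining property of $b$ we obtain $bc = cb = 0$; from $c\in L$ and Proposition~\ref{prop: int reg} we see $c \in L\cap B$, a regular $B$-ideal containing $b$.

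The crux is to conclude $c = 0$. The plan is to place $c$ in both $L\cap B$ and its Boolean complement $(L\cap B)^{\perp_B}$, at which point Proposition~\ref{prop: bool} forces $c = 0$, since the intersection of a regular ideal with its Boolean complement is trivial. The first membership is already in hand, so the hard part is upgrading the isolated annihilation $cb = 0$ to annihilation of the entire regular $B$-ideal $L\cap B$. I plan to achieve this via a further use of Lemma~\ref{lem: cond perp}, this time applied to $L^\perp$, which yields $L\cap B = E(L^\perp)^{\perp_B}$, together with the density of normalizers in $A$ and the faithfulness of $E$ to propagate the annihilation from the generator $b$ to all of $L\cap B$. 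This propagation step is the one I expect to be the main obstacle; absent the hypotheses of regular inclusion and faithful expectation one genuinely loses control, so it is precisely here that the structural hypotheses of the proposition must be used.
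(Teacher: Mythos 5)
Your reduction is correct as far as it goes: the containments $J\cap B\subseteq E(J)\subseteq (J^\perp\cap B)^{\perp_B}$ are right, the reduction of $(J^\perp\cap B)^{\perp_B}\subseteq J$ to showing $L\cap J^\perp\cap B=\{0\}$ for $L=\<b\>_A^{\perp\perp}$ is a legitimate use of the \riip, and the facts $bc=cb=0$ and $c\in L\cap B$ are correctly established. But the crux --- getting from there to $c=0$ --- is left as a plan, and the plan as stated does not work. You propose to upgrade the single relation $cb=0$ to $c\in(L\cap B)^{\perp_B}$. That implication is false: take $A=M_2(\bC)$ with $B$ the diagonal, $b=e_{11}$, $c=e_{22}$. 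Then $cb=bc=0$, but $\<b\>_A=M_2(\bC)$, so $L\cap B=B$ and $(L\cap B)^{\perp_B}=\{0\}\not\ni c$. Neither Lemma~\ref{lem: cond perp} applied to $L^\perp$ nor faithfulness of $E$ can rescue an implication whose hypotheses are insufficient: the relation $cb=0$ alone gives no control over $L\cap B$, which may be far larger than the regular $B$-ideal generated by $b$.

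What saves the argument is information you have but do not use: $b$ annihilates the \emph{entire} ideal $J^\perp\cap B$, and $J^\perp\cap B$ is an invariant ideal of $B$ by Remark~\ref{rem: inv ideals}. With this, show directly that $c\in\<b\>_A^\perp=L^\perp$ (rather than $c\in(L\cap B)^{\perp_B}$): for normalizers $m,n\in\N(A,B)$ one has $(cmb)^*(cmb)=b^*(m^*c^*cm)b=0$ because $m^*c^*cm\in J^\perp\cap B$, and $(bnc)(bnc)^*=b(ncc^*n^*)b^*=0$ likewise; hence $c$ annihilates every $n_1bn_2$ and therefore all of $\<b\>_A$, so $c\in L\cap L^\perp=\{0\}$. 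With that substitution your proof closes, and it becomes essentially the paper's argument in a different arrangement: the paper assumes $J\cap B\neq E(J)$, forms $I_3=E(J)\cap(J\cap B)^{\perp_B}$, and runs the same normalizer computation (annihilation of an invariant ideal conjugated through normalizers) to produce a nonzero regular ideal $J\cap\<J\cap B\>_A^\perp$ meeting $B$ trivially, contradicting the \riip. In both versions the engine is identical --- invariance under normalizers of an ideal of the form $I\cap B$ --- and, consistent with the statement of the proposition, neither needs invariance of $E$.
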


\begin{proof}
Let $I_1 = J \cap B$ and $I_2 = E(J).$
Then $I_1$ is an ideal of $B$, $I_2$ is an algebraic ideal of $B$, and $I_1 \subseteq I_2$.
The ideal $I_1 \neq \{0\}$ since $B$ has the \riip\ in $A$;
the algebraic ideal $I_2 \neq \{0\}$ since $E$ is a faithful conditional expectation.
Further, $I_1$ is an invariant ideal, and it is a regular ideal by Proposition~\ref{prop: int reg}.

Assume $I_1 \neq I_2$.
Let $I_3=I_2 \cap I_1^{\perp_B}$.
Then $I_3$ is an algebraic ideal.
Further, $I_3 \neq \{0\}$ by Lemma~\ref{lem: reg inc}.
We show 
\begin{equation}\label{I3}\<I_3\>_A \subseteq (\<I_1\>_A)^\perp.
\end{equation}
First notice that since $B$ is regular in $A$, for any algebraic ideal $I\subseteq B$, $\<I\>_A$ is the closed linear span of
$\{n_1bn_2 \colon b \in I,\ n_i \text{ normalizers}\}.$

So let $b\in I_3$, $c\in I_1$ and $\{n_i\}_{i=1}^4\subseteq \N(A,B)$.
Using the facts that $n_4^*n_4\in B$, $I_1\unlhd B$ is invariant and
$I_3\subseteq I_1^{\perp_B}$, we obtain 
\[
  ((n_1bn_2)(n_3cn_4))   ((n_1bn_2)(n_3cn_4))^*=
                                                 n_1b(n_2n_3cn_4n_4^*c^*n_3^*n_2^*)b^*n_1^*=0,\]
                                               so $
                                               (n_1bn_2)(n_3cn_4)=0$.  
  Similarly, $ (n_3cn_4) (n_1bn_2)=0$.  So \[n_1bn_2\in
  \overline\spn\{ncm: n, m\in \N(A,B), c\in
  I_1\}^{\perp}=(\<I_1\>_A)^\perp.\]  The inclusion~\eqref{I3} follows. 

Since $I_3 \subseteq I_2 = E(J)$ there exists $a \in J$ such that
$0 \neq E(a) \in I_3$.  Then
$0\neq E(a) a^*\in \<I_3\>_A\cap J$, showing
$\<I_3\>_A \cap J \neq \{0\}.$ By~\eqref{I3}, the regular ideal
$L:= J \cap \<I_1\>_A^\perp \neq \{0\}.$

Note that $\<I_1\>_A^\perp \cap B \subseteq I_1^{\perp_B}$. As $J \cap B = I_1$ it follows that $L \cap B = \{0\}$.
This contradicts the regular ideal intersection property.
Thus $J\cap B = E(J)$.
\end{proof}

Combining Proposition~\ref{prop: reg small to big} and Proposition~\ref{prop: int = E} we get the following theorem.

\begin{theorem}\label{thm: 1-1 reg ideals}
Let $B \susbeteq A$ be a regular inclusion of $\ca$-algebras satisfying the regular ideal intersection property, and let $N$ be a $*$-semigroup of normalizers with dense span in $A$.
Further assume there is an $N$-invariant faithful conditional expectation $E \colon A \rightarrow B$.
The invariant regular ideals of $B$ form a  Boolean algebra.
The map $J \mapsto J\cap B$, with inverse given by $K \mapsto J_K$, is a Boolean algebra isomorphism between the regular ideals of $A$ and the invariant regular ideals of $B$.
\end{theorem}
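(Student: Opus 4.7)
The strategy is to assemble the two maps $J\mapsto J\cap B$ and $K\mapsto J_K$ from the preceding results, check they are mutually inverse and well-defined, and then transport the Boolean algebra structure of regular ideals of $A$ through the bijection to the invariant regular ideals of $B$.

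First I would verify well-definedness of the two maps. For $J\mapsto J\cap B$, Proposition~\ref{prop: int reg} gives that $J\cap B$ is regular in $B$, while Remark~\ref{rem: inv ideals} shows it is automatically invariant. For $K\mapsto J_K$, Corollary~\ref{cor: N-invariant is invariant} identifies $N$-invariant and $\N(A,B)$-invariant ideals of $B$, so Proposition~\ref{prop: reg small to big}(ii) applies: $J_K$ is a regular ideal of $A$ whenever $K$ is an invariant regular ideal of $B$, and moreover that proposition records the key identity $J_K\cap B = K$.

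The central step is then to show that $J_{J\cap B} = J$ for every regular ideal $J\unlhd A$. Setting $K := J\cap B$, Corollary~\ref{cor: reg small to big} gives $J_K = \innerprod{K}_A^{\perp\perp}$; since $K\subseteq J$ and $J$ is regular, one inclusion $J_K\subseteq J$ is immediate. For the reverse inclusion, I would form the regular ideal $L := J\cap J_K^{\perp}$ (regular by Proposition~\ref{prop: bool}). Using $J_K^\perp = J_{K^{\perp_B}}$ from Proposition~\ref{prop: reg small to big}(ii), together with the already-established $J_K\cap B = K$, one obtains $L\cap B\subseteq K\cap K^{\perp_B}=\{0\}$. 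Here the regular ideal intersection property forces $L=\{0\}$, giving $J\subseteq J_K$. This deployment of the \riip\ is the main obstacle, and it is precisely the reason the hypothesis is imposed.

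Finally, since the regular ideals of $A$ already form a Boolean algebra under the operations of Proposition~\ref{prop: bool}, transporting that structure along the bijection endows the invariant regular ideals of $B$ with a Boolean algebra structure. Direct computation identifies the operations explicitly: meet corresponds to intersection, because $(J_{K_1}\cap J_{K_2})\cap B = K_1\cap K_2$; and by Proposition~\ref{prop: reg small to big}(ii), negation corresponds to $\perp_B$, because $J_K^\perp\cap B = K^{\perp_B}$. In particular $K_1\cap K_2$ and $K^{\perp_B}$ remain invariant regular ideals whenever $K_1, K_2, K$ are, so the invariant regular ideals sit inside the regular ideals of $B$ as a Boolean subalgebra, and the bijection $J\mapsto J\cap B$ is an isomorphism of Boolean algebras by construction.
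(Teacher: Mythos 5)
Your proposal is correct and follows essentially the same approach as the paper: the same preparatory results (Proposition~\ref{prop: int reg}, Proposition~\ref{prop: reg small to big}, Corollary~\ref{cor: N-invariant is invariant}) drive both the bijection and the identification of the Boolean operations. The only real difference is cosmetic: where the paper invokes Proposition~\ref{prop: int = E} to see that $J\mapsto J\cap B$ and $K\mapsto J_K$ are mutually inverse, you instead combine Corollary~\ref{cor: reg small to big} with an explicit \riip\ argument applied to $L=J\cap J_K^{\perp}$ --- a valid substitution resting on the same underlying machinery.
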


\begin{proof}
Recall that by Corollary~\ref{cor: N-invariant is invariant}, an ideal $K \unlhd B$ is invariant if and only if it is $N$-invariant.
Let $K_1, K_2 \unlhd B$ be invariant regular ideals.
Note that $K_1\cap K_2$ is an invariant
 ideal, so Proposition~\ref{prop: bool} shows $K_1\wedge K_2=K_1\cap
 K_2$ is an invariant regular ideal.
 For a regular invariant ideal $K \unlhd B$, Proposition~\ref{prop: reg
   small to big} shows $J_K\unlhd A$ is a regular ideal and
 $K^{\perp_B}=J_K^{\perp}\cap B$.
 But $J_K^{\perp}\cap B$ is an
 invariant ideal of $B$, whence $\neg K= K^{\perp_B}$ is also a regular invariant ideal in $B$.
As $K_1\vee K_2=\neg((\neg K_1) \wedge (\neg K_2))$, we conclude
that the invariant regular ideals of $B$ form a Boolean algebra.

The result now follows from Proposition~\ref{prop: reg small to big}
and Proposition~\ref{prop: int = E}.
\end{proof}

\begin{remark}   Combining Proposition~\ref{prop: bool}  with
  Theorem~\ref{thm: 1-1 reg ideals} shows that in the setting of a
  regular inclusion with the \riip\ and a faithful, invariant
  conditional expectation, the invariant regular ideals in $B$ and the regular open sets in $\Prim(A)$ are isomorphic Boolean algebras.
  \end{remark}

%%%%%%%%%%%%%%%%%%%%%%%%%%%%%%%

\section{Quotients by regular ideals}\label{Q I}
With the results from Section~\ref{RI RE}, we have all the tools we need to prove our main results about quotienting by regular ideals.
We first note that regular inclusions are preserved by quotients.

\begin{remark}\label{rqincl} Let $(A,B)$ be a regular inclusion and let $J\unlhd A$ be any ideal of $A$.
Then
  $(A/J, B/(J\cap B))$ is a regular inclusion.  Indeed, if
  $(u_\lambda)$ is a net in $B$ which is an approximate unit for $A$,
  then $(u_\lambda +J)$ is a net in $B/(B\cap J)$ which is an
  approximate unit for $A/J$, so $(A/J, B/(J\cap B))$ is an inclusion.
  Since $\{n+J: n\in \N(A,B)\}$ has dense span in $A/J$, the inclusion
  is regular.
\end{remark}

\begin{theorem}\label{lem: int prop quotient}
Suppose $(A,B)$ is a regular inclusion with the \iip.
Let $N$ be a $*$-semigroup of normalizers with dense span in $A$ and suppose there is  an $N$-invariant faithful conditional expectation $E \colon A \rightarrow B$.
Let $J \unlhd A$ be a regular ideal.
Then $B/(J\cap B)$ has the \iip\  in $A/J$.
\end{theorem}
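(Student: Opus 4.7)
The plan is to reduce the iip for the quotient inclusion $B/(J\cap B) \subseteq A/J$ directly to the iip of $(A,B)$, using the regularity of $J$ to pass ideals through the operation $I \mapsto I \cap J^\perp$. By the standard lattice correspondence for the quotient map $\pi \colon A \to A/J$, any non-zero ideal of $A/J$ has the form $I/J$ for some ideal $I \unlhd A$ with $J \subsetneq I$. Thus it suffices to produce, for each such $I$, an element $b \in I \cap B$ with $b \notin J$.

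I would first note that $J^\perp$ is a closed two-sided ideal of $A$ by Lemma~\ref{lem: perp reg}, and hence so is $I \cap J^\perp$. The key step is to verify that this intersection is non-zero. Suppose for contradiction that $I \cap J^\perp = \{0\}$. Because $I$ is a right ideal and $J^\perp$ is a left ideal, $I \cdot J^\perp \subseteq I \cap J^\perp = \{0\}$; the symmetric argument yields $J^\perp \cdot I = \{0\}$. Thus every element of $I$ annihilates $J^\perp$ on both sides, so $I \subseteq (J^\perp)^\perp = J^{\perp\perp}$. Regularity of $J$ then gives $J^{\perp\perp} = J$, forcing $I \subseteq J$ and contradicting $J \subsetneq I$.

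With $I \cap J^\perp \ne \{0\}$ in hand, I would apply the iip of $(A,B)$ to select a non-zero $b \in (I \cap J^\perp) \cap B$. Since $b \in J^\perp$ and $b \ne 0$, the element $b$ cannot lie in $J$ (else $b \in J \cap J^\perp = \{0\}$). Therefore $b + J$ is a non-zero element of $(I/J) \cap (B/(J \cap B))$, completing the verification of the iip at the quotient level.

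The main substantive point is the non-triviality of $I \cap J^\perp$, which is immediate from $J = J^{\perp\perp}$; the remaining steps are formal. This route uses only the iip of $(A,B)$ and the regularity of $J$. A more structural proof leveraging Theorem~\ref{thm: 1-1 reg ideals} and the faithful $N$-invariant conditional expectation $E$ is also conceivable and would give a clearer conceptual link to the Boolean-algebra framework developed in Section~\ref{RI RE}.
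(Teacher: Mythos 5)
Your argument is correct, and it takes a genuinely different --- and strictly more elementary --- route than the paper's. Both proofs begin the same way: given an ideal $I\unlhd A$ with $J\subsetneq I$, regularity of $J$ (i.e.\ $J=J^{\perp\perp}$) forces $I\cap J^{\perp}\neq\{0\}$; this is exactly the use of Lemma~\ref{lem: reg inc} in the paper, and your direct verification is fine. From there the paper argues by contradiction: assuming the image of $I$ meets $B/(J\cap B)$ trivially, it identifies $I\cap B$ with $J\cap B$ and then invokes Proposition~\ref{prop: int reg}, Proposition~\ref{prop: reg small to big} and Theorem~\ref{thm: 1-1 reg ideals} to obtain $J^{\perp}\cap B=(J\cap B)^{\perp_B}$, whence $(I\cap J^{\perp})\cap B=\{0\}$, contradicting the \iip\ of $(A,B)$; that identification is precisely where the faithful $N$-invariant conditional expectation enters. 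You instead apply the \iip\ directly to the non-zero ideal $I\cap J^{\perp}$ to produce $0\neq b\in I\cap J^{\perp}\cap B$, and observe that $b\notin J$ because $J\cap J^{\perp}=\{0\}$ (any $x$ in that intersection satisfies $xx^{*}=0$, since closed ideals are self-adjoint), so $b+J$ is a non-zero element of $(I/J)\cap\bigl(B/(J\cap B)\bigr)$. This bypasses the conditional expectation, the regularity of the inclusion, and the semigroup $N$ entirely: your argument establishes the conclusion for an arbitrary \cstar-inclusion $(A,B)$ with the \iip\ and an arbitrary regular ideal $J\unlhd A$, which is a stronger statement than the theorem as posed. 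What the paper's heavier route buys is the identity $J^{\perp}\cap B=(J\cap B)^{\perp_B}$ and the surrounding Boolean-algebra correspondence, which are needed again later (e.g.\ in Lemma~\ref{lem: CE quotient} and Theorem~\ref{thm: Cartan quotient}); but for the \iip\ assertion itself your direct argument suffices.
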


\begin{proof}
For notational purposes, use $q$ for the quotient mapping of $A$ onto
$A/J$,  let $\dot A:= A/J$, and $\dot B:=B/(B\cap J)$.

Let $I \unlhd  \dot A$ satisfy $I \cap \dot B = \{0\}$ and put
 $L: = q^{-1}(I) \unlhd A$.
 Note that $L$ contains $J$.
Our task is to show $J=L$.  Arguing by contradiction, suppose $J\neq L$.  Since $J$ is a regular ideal, 
Lemma~\ref{lem: reg inc} shows $L \cap J^\perp$ is a non-zero ideal of
$A$. 
Define 
\[K:=L\cap B\] and note that $K= J\cap B$.  
Indeed,  $J\cap B \subseteq L \cap B$ because $J \subseteq L$, and 
the reverse inclusion  follows from $I \cap \dot B = \{0\}$.

By Proposition~\ref{prop: int reg},
$K$
 is a non-zero regular ideal in $B$.
By Proposition~\ref{prop: reg small to big} and Theorem~\ref{thm: 1-1 reg ideals}, $J^\perp \cap B = K^{\perp_B}$.
Hence $(L \cap J^\perp) \cap B = K \cap K^{\perp_B} = \{0\}$. 
This contradicts the ideal intersection property.
Thus $L = J$, and $I$ is the zero ideal in $A/J$.
\end{proof}

\begin{remark}\label{ss: directed graphs}
In this remark, we discuss how Theorem~\ref{lem: int prop quotient} can be used to give an alternate proof of
\cite[Theorem~3.5]{BFPR2021reg}.

Let $E=\{E^0,E^1, r, s\}$ be a directed graph.  We assume that $E$ is row-finite with no sources, that is $0<r\inv(v)< \infty$ for all $v\in E^0$.  A sequence of edges $e_1e_2\cdots e_n$ is a path in $E$, if for all $i$, $r(e_i)=s(e_{i-1})$; we say it is a return path if $r(e_1)=s(e_n)$.  We say $E$ satisfies condition~(L) if for every return path $e_1e_2\cdots e_n$ there exists an $i$ such that $r\inv(r(e_i))\backslash \{e_i\}\neq \emptyset$.  

A set of mutually orthogonal projections $\{P_v\}_{v\in E^0}$ and a set of partial isometries $\{S_e\}_{e\in E}$ is a Cuntz-Krieger $E$-family if 

\[
P_{s(e)}=S_{e}^{*} S_{e}^{}=\sum_{r(f)=s(e)} S^{}_{f}S_{f}^*.
\]
The $C^*$-algebra of $E$, $C^*(E)$, is the unique $C^*$-algebra
generated by a universal Cuntz-Krieger $E$-family, $\{p_v, s_e\}$ (for
details see \cite{RaeGraphAlg}).  The \cstar-algebra $C^*(E)$ comes equipped with a gauge action.
More precisely, for each $t \in \bT$ the map
$$ \gamma_t(s_e) = ts_e \text{ and } \gamma_t(p_v) = p_v $$
for  $e\in E^1$ and $v\in E^0$, uniquely defines an automorphism on C$^*(E)$ so that the map $t \mapsto \gamma_t$ is strongly continuous.

Let $D_E$ be the $C^*$-subalgebra of
$C^*(E)$ generated by
$$\{(s^{}_{e_1}s^{}_{e_2}\cdots s^{}_{e_n})(s^{}_{e_1}s^{}_{e_2}\cdots s^{}_{e_n} )^*: n\in \bN, e_i\in E^{1}\}.$$
A consequence of the Cuntz-Kreiger Uniqueness theorem \cite[Theorem
3.7]{KPR1998} is that $D_E\subseteq C^*(E)$ has the ideal intersection
property if and only if $E$ satisfies condition~(L).

Now if $J$ is an ideal in $C^*(E)$, then by \cite[Theorem~4.1]{BPRS2000}, $J$ is generated by a set of vertex
projections $\{p_v: v\in H\subseteq E^0\}$ if and only if $J$ is gauge-invariant and in this case
there exists a directed graph $E/J$ such
that $C^*(E/J)\cong C^*(E)/J$.

Let $E$ be a a directed graph satisfying condition~(L).  Suppose $J$
is  a  gauge-invariant regular ideal.
By Theorem~\ref{lem:  int prop quotient}, 
$$D_E/(J\cap D_E)\subset C^*(E)/J\cong C^*(E/J)$$
has the ideal intersection property and thus $E/J$ satisfies condition~(L).
 Thus \cite[Theorem~3.5]{BFPR2021reg} follows from Theorem~\ref{lem: int prop quotient}.

In  \cite[Proposition~3.7]{BFPR2021reg} we go on to show that all regular ideals in $C^*(E)$ are gauge-invariant when the graph $E$ has condition~(L).
The analogous result for higher-rank graph \cstar-algebras is proved in \cite[Proposition~6.7]{Sch2021}.
We generalize these results in Theorem~\ref{cor: U reg} below (see Section~\ref{ss: directed graphs 2} for details on the relationship to graph algebras).
\end{remark}

Theorem~\ref{lem: int prop quotient} has implications for quotients of
Cartan inclusions by regular ideals, and to this we now turn our attention.
Let us first recall the notion of a Cartan inclusion from \cite{Ren2008}.

\begin{definition}
Let $D \subseteq A$ be a regular inclusion of $\ca$-algebras.
We say that $D$ is a \emph{Cartan subalgebra}, or \emph{$D$ is Cartan in $A$} if
\begin{enumerate}
\item there is a faithful conditional expectation $E \colon A \rightarrow D$;
\item $D$ is maximal abelian in $A$.
\end{enumerate}  We will also say that $(A,D)$ is a \textit{Cartan
  inclusion} when these conditions hold.
\end{definition}

\begin{remark}\label{rem: cartan iip}
Recall that if $D$ is a Cartan subalgebra of $A$, then the inclusion $D\subseteq A$ has the ideal intersection property.
This well-known fact can be found in several places, e.g. \cite[Corollary~3.2]{NagRez2014} and \cite[Theorem~6.1]{Pit2017}.
\end{remark}
Thus for a Cartan inclusion $(A,D)$, Theorem~\ref{lem: int prop
  quotient} implies that if $J\idealin A$ is a regular ideal, then
$D/(J\cap D)\subseteq A/J$ has the ideal intersection property too.
It is therefore natural to ask if this quotient inclusion is also
Cartan.  We prove this in Theorem~\ref{thm: Cartan quotient} below.
The following example shows that this does not hold for arbitrary
ideals.  Another example for graph algebras is described in
\cite[Remark~4.5]{BPRS2000}.

\begin{example}\label{ex Disk} Suppose $\bZ$ acts on the closed disk $\ol{\bD}$ by irrational rotation and $$\bP:=\ol{\bD}\backslash \{(0,0)\}.$$ Then $n\cdot x\neq x$ for all $x\in\bP$.
Notice that $\bP$ is dense in $\ol{\bD}$ so $C(\ol{\bD})$ is Cartan in $C(\ol{\bD})\rtimes \bZ$ ( see \cite[Example~6.1]{Ren2008}). 

On the other hand,  $\bP$ is an open invariant subset of $\ol{\bD}$ and so by \cite[Proposition~II.4.6]{RenaultBook} we have that $C_0(\bP)\rtimes \bZ$ is an ideal in $C(\ol{\bD})\rtimes \bZ$.  Moreover the quotient $C(\ol{\bD})\rtimes \bZ/(C_0(\bP)\rtimes \bZ)$ is isomorphic to $C^*(\bZ)\cong C(\bT)$ and  $C(\ol{\bD})/C_0(\bP)\cong \bC$.
As $\bC$ is not maximal abelian in $C(\bT)$ we have
$C(\ol{\bD})/C_0(\bP)$ is not Cartan in $C(\ol{\bD})\rtimes
\bZ/(C_0(\bP)\rtimes \bZ)$.   Thus Cartan inclusions are not
necessarily preserved by quotients.   Notice that
$\interior{(\overline{\bP})}=\ol{\bD}$ and so $\bP$ is not regular in $\ol{\bD}$. 
Hence, by Proposition~\ref{prop: int reg}, we have $C_0(\bP)\rtimes \bZ$ is not a regular ideal in $C(\ol{\bD})\rtimes \bZ$.
\end{example}

Before proving Theorem~\ref{thm: Cartan quotient} which shows the quotient of a  Cartan
inclusion by a {\em
  regular} ideal is again a Cartan inclusion,
we show  that Theorem~\ref{lem: int prop quotient} guarantees the existence of a faithful conditional expectation in the quotient.

\begin{lemma}\label{lem: CE quotient}
Let $(A,B)$ be a regular inclusion with the \iip, let  $N$ be a $*$-semigroup of normalizers with dense span in $A$, and suppose $E:
A\rightarrow B$ is a faithful $N$-invariant conditional expectation.
If $J\idealin A$ is a regular ideal, then the map
\begin{align*}
	E_{A/J} \colon A/J &\rightarrow B/(B\cap J)\dstext{given by}\\
			a + J &\mapsto E(a) + (J \cap B)
\end{align*}
is a well-defined faithful conditional expectation.
\end{lemma}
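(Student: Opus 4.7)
The plan is to verify in turn the three required properties: that $E_{A/J}$ is well-defined on cosets, that it satisfies the axioms of a conditional expectation, and finally that it is faithful. The first two parts are essentially formal consequences of earlier work, and the third will be the main point where we invoke Theorem~\ref{thm: 1-1 reg ideals}.

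First I would handle well-definedness. Since $(A,B)$ has the \iip, it certainly has the \riip, so Proposition~\ref{prop: int = E} applies to the regular ideal $J$ and yields $E(J) = J \cap B$. In particular $E$ maps $J$ into $J \cap B$, which is exactly what is needed to see that the formula $a + J \mapsto E(a) + (J \cap B)$ depends only on the coset $a+J$. The conditional expectation axioms for $E_{A/J}$ then follow immediately from those for $E$ by passing to quotients: $E_{A/J}$ is linear, completely positive, and contractive because $E$ is; it is a $B/(J\cap B)$-bimodule map because $E$ is a $B$-bimodule map and $J \cap B \subseteq J$; and it satisfies $E_{A/J}^2 = E_{A/J}$ because $E^2 = E$.

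The main step is faithfulness. Suppose $a + J \in A/J$ and $E_{A/J}\bigl((a+J)^*(a+J)\bigr) = 0$ in $B/(J\cap B)$. Unwinding the definition, this means $E(a^*a) \in J \cap B$, i.e., $a$ lies in the set $J_{J\cap B} = \{x \in A : E(x^*x) \in J\cap B\}$ of Notation~\ref{not: J_K}. Now because $J$ is a regular ideal in $A$ with $J \cap B$ an invariant regular ideal of $B$ (by Proposition~\ref{prop: int reg} and Remark~\ref{rem: inv ideals}), Theorem~\ref{thm: 1-1 reg ideals} tells us that the map $K \mapsto J_K$ is inverse to $I \mapsto I \cap B$ on the corresponding Boolean algebras of regular ideals. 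Applying this inverse to $K = J \cap B$ recovers $J = J_{J\cap B}$. Therefore $a \in J$, i.e., $a + J = 0$, which gives the required faithfulness.

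The only subtlety I anticipate is making sure the hypotheses of Theorem~\ref{thm: 1-1 reg ideals} are fully satisfied in the last step: one needs $(A,B)$ to be a regular inclusion with the \riip\ and $E$ to be an $N$-invariant faithful conditional expectation, all of which are already assumed. Once that is noted, the identification $J = J_{J\cap B}$ is immediate and the proof is complete.
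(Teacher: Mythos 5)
Your proof is correct, but the faithfulness step takes a genuinely different route from the paper's. The paper defines $L := \{x \in A/J : E_{A/J}(x^*x) = 0\}$, verifies that $L$ is a two-sided ideal of $A/J$ (the right-ideal part re-running the $N$-invariance computation), observes $L \cap (B/(J\cap B)) = \{0\}$, and then invokes Theorem~\ref{lem: int prop quotient} (the ideal intersection property in the quotient, proved just before) to conclude $L = \{0\}$. You instead work upstairs in $A$: the condition $E_{A/J}((a+J)^*(a+J))=0$ unwinds to $E(a^*a) \in J\cap B$, i.e.\ $a \in J_{J\cap B}$, and since Theorem~\ref{thm: 1-1 reg ideals} asserts that $K \mapsto J_K$ inverts $I \mapsto I\cap B$ on regular ideals, $J_{J\cap B} = J$ and hence $a \in J$. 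This is shorter and avoids both the ideal verification for $L$ and any appeal to Theorem~\ref{lem: int prop quotient}; there is no circularity, since Theorem~\ref{thm: 1-1 reg ideals} precedes this lemma and does not depend on it. What the paper's route buys is that it only uses the containment-level facts ($J^\perp\cap B = (J\cap B)^{\perp_B}$ and the quotient intersection property) rather than the full bijection, but given that Theorem~\ref{thm: 1-1 reg ideals} is already available under exactly the stated hypotheses, your shortcut is legitimate. Your treatment of well-definedness via Proposition~\ref{prop: int = E} matches the paper exactly; for the conditional expectation axioms the paper cites Tomiyama's theorem rather than checking the bimodule property directly, but your check is fine (contractivity of $E_{A/J}$ follows from $E(J)=J\cap B$, and Tomiyama then upgrades a norm-one idempotent onto $B/(J\cap B)$ to a conditional expectation).
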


\begin{proof}
Let $K := J \cap B$.
For $a_1, a_2\in A$,  if $a_1 + J = a_2 + J$,
then $a_1 - a_2 \in J$, whence $E(a_1 - a_2) \in E(J)$.
By Proposition~\ref{prop: int = E}, $E(J) = K$.
Thus  $E(a_1) + K = E(a_2) + K$.  Therefore, $E_{A/J}$ is  well-defined,
and an argument using~\cite[Theorem~1]{Tom1957} (adjoining a unit if necessary) shows it is a conditional expectation.

It remains to show that $E_{A/J}$ is faithful.  Let
\[L := \{ x\in A/J \colon E_{A/J}(x^*x) = 0\}.\]  We shall show that $L$
is an ideal in $A/J$.  That $L$ is a left ideal follows as in the
proof of Proposition~\ref{prop: reg small to big}.  To show $L$ is
a right ideal, we use the $N$-invariance of $E$:  for 
$a + J \in L$ and  $n \in N$, 
\begin{align*}
	E_{A/J}([(a + J)(n + J)]^*(a + J)(n + J)) & = E(n^*a^*an) + K \\
	& = nE(a^*a)n + K\\
	& = 0.
\end{align*}
This gives $(a + J)(n+J) \in L$.  Therefore for any $y\in
\spn N$, $(a+J)(y+J)\in L$. Since $\spn N$ is dense in $A$ by assumption,  $L$ is a right ideal.

By definition, $L \cap (B/K) = \{0\}$, and an application of
Theorem~\ref{lem: int prop quotient} shows $L=\{0\}$.  Thus
$E_{A/J}$ is a faithful conditional expectation.
\end{proof}

Here is the promised result concerning quotients of Cartan inclusions
by regular ideals.

\begin{theorem}\label{thm: Cartan quotient}
If $D$ is a Cartan subalgebra of a $\ca$-algebra $A$ and $J \unlhd A$ is a regular ideal, then $D/(J\cap D)$ is a Cartan subalgebra of $A/J$.
\end{theorem}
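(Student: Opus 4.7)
We verify the three Cartan conditions for $(A/J, D/(J\cap D))$: it is a regular inclusion; it admits a faithful conditional expectation onto $D/(J\cap D)$; and $D/(J\cap D)$ is maximal abelian in $A/J$.

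The regular-inclusion property descends from $(A,D)$ to the quotient by Remark~\ref{rqincl}. For the conditional expectation, recall that the Cartan expectation $E\colon A\to D$ is automatically unique and invariant (Example~\ref{CEexs}\ref{CEexs2b}) and that $(A,D)$ has the ideal intersection property (Remark~\ref{rem: cartan iip}); Lemma~\ref{lem: CE quotient}, applied with $N := \N(A,D)$, then produces the desired faithful conditional expectation $E_{A/J}\colon A/J \to D/(J\cap D)$.

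The maximal-abelian condition is the main obstacle. Let $\dot a \in A/J$ commute with every element of $\dot D := D/(J\cap D)$, and choose a lift $a \in A$; then $[a,d]\in J$ for every $d\in D$. Replacing $a$ by $a - E(a)$, we may assume $E(a)=0$, reducing the task to showing $a\in J$. Putting $K := J\cap D$, Theorem~\ref{thm: 1-1 reg ideals} identifies $J$ with $J_K = \{x\in A : E(x^*x)\in K\}$, so the goal becomes $E(a^*a)\in K$.

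A natural route to this last step is via the Kumjian--Renault realization of the Cartan pair: $(A, D) \cong (C^*_r(G;\Sigma), C_0(\go))$ for an effective twisted Hausdorff \'etale groupoid $(G, \Sigma)$, and under Theorem~\ref{thm: 1-1 reg ideals} the regular ideal $J$ corresponds to a regular open invariant subset $U\subseteq \go$. With $F := \go \setminus U$, the quotient inclusion is modeled by $(C^*_r(G|_F;\Sigma|_F), C_0(F))$, and regular-openness of $U$ gives $F = \ol{\interior{F}}$; the density in $\go$ of points with trivial isotropy therefore transfers to $F$, making $G|_F$ effective and hence yielding the maximal-abelian property. A more self-contained argument, avoiding the groupoid model, would seek to show directly, using the normalizer decomposition $A = \ol{\spn}\,\N(A,D)$ together with the identities $n^*n, nn^*\in D$ valid for each $n\in \N(A,D)$, that the conditions $[a,D]\subseteq J$ and $E(a)=0$ force $E(a^*a)\in K$.
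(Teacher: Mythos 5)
Your handling of the first two Cartan conditions (the quotient is a regular inclusion, and carries a faithful conditional expectation) coincides with the paper's, and your reduction of the maximal-abelian condition to showing $a-E(a)\in J=J_K$, i.e.\ $E(a^*a)\in K$, is a correct setup. The gap is that this key step is never actually proved. Your first route, through the Kumjian--Renault realization, is only a sketch and has two problems: Renault's theorem producing a \emph{topologically principal} Weyl groupoid requires $A$ separable, a hypothesis absent from the theorem (without it one only gets an effective groupoid, and your argument explicitly uses density of trivial-isotropy points); and the assertion that this density ``transfers to $F$'' for a regular closed invariant $F$ is itself a claim needing proof --- the paper proves the analogous statement only for transformation groups (Proposition~\ref{prop: reg and top free}) and notes there that it yields an alternate proof of Theorem~\ref{lem: int prop quotient} only for inclusions $C(X)\subseteq C(X)\rtimes_r\Gamma$. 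Your second route (``would seek to show directly \dots that the conditions force $E(a^*a)\in K$'') is a restatement of the goal, not an argument.

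For comparison, the paper closes exactly this gap with a short direct computation. From $[a,D]\subseteq J$ one gets $ae-ea\in J\cap J^\perp=\{0\}$ for every $e\in J^\perp\cap D=K^{\perp_D}$; combined with abelianness of $D$ this shows $ea$ commutes with $D$, hence $ea=ae\in D$ by maximality of $D$ in $A$, hence $(a-E(a))e=0$. A normalizer computation then shows $a-E(a)$ annihilates $\<K^{\perp_D}\>_A$, and since $J$ is regular one has $\<K^{\perp_D}\>_A^\perp=J$, so $a-E(a)\in J$. This is precisely the ``self-contained argument'' you gesture toward; supplying it (or else restricting to separable $A$ and proving the transfer of topological principality to $G|_F$) is what the proof still requires.
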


\begin{proof}
Let $K = D \cap J$.
Remark~\ref{rqincl} shows $(A/J, D/(D\cap J))$ is a regular inclusion.

Since $(A,D)$ is a Cartan inclusion, it has the ideal intersection
property (see Remark~\ref{rem: cartan iip}).  Further, by Example~\ref{CEexs}(ii)(b) the faithful
conditional expectation $E \colon A \rightarrow D$ is invariant.
We can thus apply Lemma~\ref{lem: CE quotient} to see that there is a
faithful conditional expectation $E_{A/J} \colon A/J \rightarrow D/K$.

It remains to show that $D/K$ is a maximal abelian subalgebra of $A/J$.
To this end, take $a \in A$ such that
$ad - da \in J$ for all $d \in D$.  To complete the proof, we will
show
\begin{equation}\label{Car1}
  a + J = E(a) + J.
\end{equation}

Note that $J^\perp \cap D = K^{\perp_D}$ by Proposition~\ref{prop: reg small to big}(ii). 
If $e \in J^\perp \cap D$
then $ae - ea \in J^\perp \cap J$.
Thus $ae - ea = 0$.
This shows that  \begin{equation*}\label{Car2}
  ae = ea\text{ for all } e \in K^{\perp_D}.
\end{equation*}

Next we show $ea$ commutes with $D$.  For $d \in D$ and $e \in
K^{\perp_D}$, the facts that
$ad-da \in J$, $D$ is abelian, and $ae=ea$ yield,
\[
	0 = e(ad-da) 
	= (ea)d - d(ea).
\]
Hence $ea$ commutes with $D$.

As $D$ is maximal abelian in $A$ we have $ea=ae \in D$, whence
$ae = E(ae) = E(a)e$.  These considerations are independent of the
choice of $e\in K^{\perp_D}$, so \begin{equation*}\label{Car3} (a - E(a))e
  = 0\text{ for all } e \in K^{\perp_D}.
\end{equation*}

Let $n_1$ and $n_2$ be normalizers of $D$ and take $e \in K^{\perp_D}$.
Then
\begin{equation*}
((a - E(a))n_1e n_2)((a - E(a))n_1e n_2)^*
= (a - E(a))n_1 e n_2 n_2^* e^* n_1^*(a - E(a))^* = 0,
\end{equation*}
since $n_1 e n_2 n_2^* e^* n_1^* \in K^{\perp_B}$.
Thus $(a - E(a))n_1 e n_2 = 0$.
Therefore \[a - E(a) \in \<K^{\perp_B}\>^\perp.\]

 However, $\<K^{\perp_B}\>^\perp = J$ by Proposition~\ref{prop: reg small to big}.
Hence $a + J = E(a) + J$, establishing~\eqref{Car1}.
\end{proof}

\begin{remark}
The converse of Theorem~\ref{thm: Cartan quotient} does not hold: it is possible to find a Cartan inclusion $(A,D)$  and a non-regular ideal $J \unlhd A$ so that $D/(J \cap D)$ is Cartan in $A/J$, see e.g. \cite[Example~3.9]{BFPR2021reg}.
           \end{remark}

\section{\cstaralg s of 
  Twisted \'Etale Groupoids}\label{exact}

We now specialize our study to the $C^*$-algebras of twisted,
Hausdorff, \'etale groupoids.  As a consequence of exactness we will
get an explicit description of the regular ideals of such
$C^*$-algebras.

To begin our discussion, we recall a few facts and some notation
concerning $C^*$-algebras from twists.  We refer the reader to
\cite[Section~2]{BFPR2021graded} for details.

For a groupoid $G$ we will use $\go$ to denote its unit space.  We
identify $\go$ with the objects in $G$ and use $r,s: G\to
\go$ to denote the range and source maps.

For each subset $\Delta\subset \go$ there is a subgroupoid 
\[
G_\Delta:=\{\gamma\in G: r(\gamma),s(\gamma)\in \Delta\}.
\]
When $G_\Delta=\{\gamma\in G: s(\gamma)\in \Delta\}$, that is,
$s(\gamma)\in \Delta\Rightarrow r(\gamma)\in \Delta$, $\Delta$ is said
to be an \textit{invariant subset} of $\go$; we will also  refer to
such as set as \emph{invariant}.

We assume groupoids are endowed with a locally compact Hausdorff
topology in which composition and inversion are continuous.  
An open set $B \subseteq G$ is a \emph{bisection} if $r|_B$ and $s|_B$ are homeomorphisms onto their images. 
We say $G$ is \emph{\'etale} if it has a basis consisting of bisections.
A \emph{twist} is a central extension of a groupoid $G$
\[
  \go\times \bT\overset{\iota}{\to} \Sigma\overset{q}{\twoheadrightarrow} G.
\]
We usually suppress writing the maps $\iota$ and $q$ and simply
denote a twist by $\Sigma \rightarrow G$ or $(\Sigma;G)$.
The map $q$ restricts to a homeomorphism between the unit spaces $\Sigma^{(0)}$ and $G^{(0)}$.
We will thus identify $\Sigma^{(0)}$ with $\go$ without mention of $q$.
A well-known construction produces the \textit{reduced $C^*$-algebra,
  $C^*_r(\Sigma;G)$ of $(\Sigma;G)$};  it 
is the completion of a convolution algebra using the reduced norm.   
Details may be found in a number of sources; see for example,
\cite{Ren2008}, \cite{SimsGroupoidsBook} or  
\cite{BFPR2021graded}.   
If $\Sigma = G\times \bT$ we drop $\Sigma$ from the notation and refer to this algebra as $C^*_r(G)$.  

If $H$ is an open subgroupoid of $G$, the twist $\Sigma\rightarrow G$
determines a central extension 
\[
  H^{(0)}\times \bT\to q^{-1}(H)\twoheadrightarrow H
\]
and $C_r^*(q^{-1}(H);H)$ embeds in $C^*_r(\Sigma;G)$
\cite[Lemma~2.7]{BFPR2021graded}. We will use this embedding in the
following instances.
\begin{enumerate}
\item If $U$ is an invariant open subset of $\go$, then $G_U$ is open
  in $G$ and $q^{-1}(G_U) = \Sigma_U$.  The C$^*$-algebra
  $C^*_r(\Sigma_U;G_U)$ embeds in $C_r^*(\Sigma; G)$ as a closed
  ideal.  In fact,
  $C_r^*(\Sigma_U; G_U) = \<C_0(U)\>_{C_r^*(\Sigma;G)}$ by
  \cite[Lemma~2.7]{BFPR2021graded}.
\item Since $G$ is \'etale, the unit space $\go$ is clopen in $G$ and
  $C^*_r(q^{-1}(\go); \go)$ embeds in $C^*_r(\Sigma; G)$.  As 
  $C^*_r(q^{-1}(\go); \go)\cong C_0(\go)$, we identify $C_0(\go)$ with
  $C^*_r(q^{-1}(\go); \go)$.  In this sense, we regard $C_0(\go)$ as a
  subalgebra of $C^*_r(\Sigma; G)$; furthermore,
  $(C^*_r(\Sigma;G), C_0(\go))$ is a regular inclusion.
\end{enumerate}
In the last instance, there exists a canonical conditional expectation
$E: C^*_r(\Sigma; G)\to C_0(\go)$.  This conditional expectation is
invariant for the $*$-semigroup $\Nbi$ of normalizers that are supported on bisections. 
By Corollary~\ref{cor: N-invariant is invariant} an ideal $C_0(U)$ in $C_0(\go)$ is invariant in the sense of Definition~\ref{def invariant} if and only if it is $N_{\mathrm{bi}}$-invariant.
Moreover, $C_0(U)$ is an invariant ideal of $C_0(\go)$  if and only if $U$ is an invariant open set.

Suppose $U$ is an open invariant set in $\go$ and $F=\go\setminus U$.
Since $F$ is a closed invariant subset, $G_F$ is a groupoid with unit
space $F$.
Further, $q^{-1}(G_F) = \Sigma_F$.
We thus get a twist,
\[F\times \bT\rightarrow \Sigma_F\twoheadrightarrow G_F.\]  It is not difficult to show
that the restriction map,
$C_c(\Sigma;G)\ni g\mapsto g|_F\in C_c(\Sigma_F;G_F)$, extends to a
$*$-homomorphism, \begin{equation}\label{phidef}
\phi: C_r^*(\Sigma;G)\to C^*(\Sigma_F; G_F).
\end{equation}
Thus, with $\theta: C_0(\go)\rightarrow C_0(F)$ denoting the quotient
map $\theta(f) = f|_F$,  we obtain the
commutative diagram (where the vertical maps are the conditional expectations described above), 
\begin{equation}
\label{comm diagram}
\xymatrix{
C_r^*(\Sigma_U;G_U)\ar[r]\ar[d]^{E_U} & C_r^*(\Sigma;G)\ar[d]^{E}\ar@{->>}[r]^\phi &C_r^*(\Sigma_F;G_F)\ar[d]^{E_F}\\
 C_0(U)\ar[r] & C_0(\go)\ar@{->>}[r]^\theta& C_0(F).
}
\end{equation}
While the bottom row is exact in the middle, it is possible that the
top row is not exact \cite[Appendix by Skandalis]{Ren1991}.

\begin{definition}\label{def: exact}
If the top row of \eqref{comm diagram} is also
exact in the middle, we
say that $G$ is {\em inner exact at $U$.}  When $G$ is inner exact for every open
invariant set $U\subseteq \go$, we say $G$ is \textit{inner exact}, or
more simply, \textit{exact}.
\end{definition}

By  \cite[Theorem~3.5]{Lal2019}, inner exactness is a property of $G$
and is independent of the twist over $G$. 
Our next goal is to obtain a description of certain regular ideals in the reduced
\cstaralg\ of a twist over an exact groupoid, see Theorem~\ref{cor: U
  reg} below.

Let $U \subseteq \go$ be an invariant open set.
Set 
$$ J_U := \{ x \in C_r^*(\Sigma; G) \colon E(x^*x) \in C_0(U)\}.$$
(This is simplified notation for the ideal $J_{C_0(U)}$ defined in Notation~\ref{not: J_K}.)

\begin{proposition}\label{Prop: J}  Suppose $\Sigma\to G$ is a twist,
$U\subseteq \go$ is an invariant open set, and $\phi$ is defined as in \eqref{phidef}. 
Then $\ker \phi=J_U$.
\end{proposition}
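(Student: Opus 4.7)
The plan is to exploit the commutative diagram~\eqref{comm diagram}, together with the fact that the conditional expectation $E_F$ on $C_r^*(\Sigma_F;G_F)$ is faithful, to convert the containment $E(x^*x)\in C_0(U)$ into the vanishing of $\phi(x)$ and vice versa. Both directions will follow from a single identity: by the commutativity of the right-hand square in \eqref{comm diagram}, for every $a\in C_r^*(\Sigma;G)$ we have
\[
	\theta(E(a)) = E_F(\phi(a)).
\]
In particular, applying this with $a=x^*x$ yields
\[
	E(x^*x)\big|_F = E_F(\phi(x)^*\phi(x)),
\]
since $\phi$ is a $*$-homomorphism and $\theta$ is the restriction map $f\mapsto f|_F$.

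For the inclusion $\ker\phi\subseteq J_U$, I would note that $\ker\phi$ is a $*$-ideal, so if $\phi(x)=0$ then $\phi(x^*x)=0$, and the displayed identity forces $E(x^*x)|_F=0$; thus $E(x^*x)\in C_0(U)$ and $x\in J_U$. For the reverse inclusion $J_U\subseteq \ker\phi$, suppose $E(x^*x)\in C_0(U)$, i.e.\ $\theta(E(x^*x))=0$. Then the displayed identity gives $E_F(\phi(x)^*\phi(x))=0$, and faithfulness of $E_F$ yields $\phi(x)^*\phi(x)=0$, hence $\phi(x)=0$.

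I do not anticipate any serious obstacle: the only subtlety is ensuring that the canonical conditional expectation $E_F$ on $C_r^*(\Sigma_F;G_F)$ is faithful (which is standard for reduced twisted groupoid \cstaralg s and is what made the diagram~\eqref{comm diagram} useful in the first place) and that the right-hand square of~\eqref{comm diagram} genuinely commutes on a dense subalgebra, where it can be verified on $C_c(\Sigma;G)$ by pointwise restriction to $\go$ and then to $F$. Note that exactness of $G$ at $U$ is not needed for this proposition; it only enters when one wants to identify the quotient $C_r^*(\Sigma;G)/J_U$ with $C_r^*(\Sigma_F;G_F)$, i.e.\ to conclude surjectivity of the induced map $C_r^*(\Sigma;G)/J_U\to C_r^*(\Sigma_F;G_F)$.
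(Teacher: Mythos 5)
Your proof is correct and is essentially identical to the paper's: both directions are obtained from the identity $\theta(E(x^*x))=E_F(\phi(x^*x))$ coming from the right-hand square of~\eqref{comm diagram}, together with faithfulness of $E_F$ for one inclusion and exactness of the bottom row (i.e.\ $\ker\theta=C_0(U)$) for the other. Your closing remark that inner exactness at $U$ is not needed here also matches the paper, which invokes exactness only later to identify $\ker\phi$ with $C_r^*(\Sigma_U;G_U)$.
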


\begin{proof}
Suppose $x\in \ker \phi$. Then $\theta(E(x^*x))=E_F(\phi(x^*x))=0$ so
$E(x^*x)\in \ker\theta$.  Since the bottom row of~\eqref{comm diagram} is
exact, $E(x^*x)\in C_0(U)$. Thus,  $\ker\phi\subseteq J_U$.  

Now suppose $x\in J_U$.  Then $E_F(\phi(x^*x))=\theta(E(x^*x))=0$, so
faithfulness of $E_F$ gives $\phi(x)=0$.  Hence $x\in \ker\phi$, and so $\ker\phi=J_U$ as desired.
\end{proof}

%%%%%%%%%%%%%%%%%%%%NEW PARAGRAPH
This new description of $J_U$ as the kernel of a quotient map allows us to strengthen Corollary~\ref{cor: reg small to big} for twisted groupoid C$^*$-algebras.
In the groupoid setting, Corollary~\ref{cor: reg small to big} says that if $C_0(\go)$ has the regular ideal intersection property in $C_r^*(\Sigma; G)$, and $U \subseteq \go$ is an open regular invariant set, then  $C_r^*(\Sigma_U; G_U)^{\perp\perp}=J_U$.
We now show that for reduced C$^*$-algebras of twists we can drop the regular ideal intersection property.

\begin{proposition}\label{prop: reg kern}
Let $\Sigma\to G $ be a twist.  Let $U\subseteq \go$ be a regular invariant open set and $F=\go\setminus U$.  Let $\phi: C_r^*(\Sigma;G)\to C_r^*(\Sigma_F; G_F)$ be the $*$-homomorphism that extends restriction.  Then $\ker \phi$ is a regular ideal and $C_r^*(\Sigma_U; G_U)^{\perp\perp}=\ker \phi$.
\end{proposition}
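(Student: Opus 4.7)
The plan is to combine Proposition~\ref{Prop: J} with Proposition~\ref{prop: reg small to big}(ii), applied to the ideal $C_0(U)\unlhd C_0(\go)$. Since $U$ is a regular open subset of $\go$, the ideal $C_0(U)$ is regular in $C_0(\go)$; since $U$ is invariant, $C_0(U)$ is an $\Nbi$-invariant ideal. Because $C_0(\go)\subseteq C_r^*(\Sigma;G)$ is a regular inclusion equipped with the canonical faithful $\Nbi$-invariant conditional expectation $E$, Proposition~\ref{prop: reg small to big}(ii) applies and yields that $J_U$ is a regular ideal of $C_r^*(\Sigma;G)$ with
\[
J_U^\perp \;=\; J_{C_0(U)^{\perp_{C_0(\go)}}} \;=\; J_V,
\]
where $V:=U^\perp$ is the regular open complement of $U$ in $\go$. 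Combined with Proposition~\ref{Prop: J}, which identifies $\ker\phi$ with $J_U$, this already gives the first assertion that $\ker\phi$ is a regular ideal.

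For the equality $C_r^*(\Sigma_U;G_U)^{\perp\perp}=J_U$, I would establish the two containments separately. Proposition~\ref{prop: reg small to big}(i) gives $C_0(U)\subseteq J_U$, so $C_r^*(\Sigma_U;G_U)=\<C_0(U)\>_{C_r^*(\Sigma;G)}\subseteq J_U$; taking double perpendiculars and using regularity of $J_U$ yields $C_r^*(\Sigma_U;G_U)^{\perp\perp}\subseteq J_U^{\perp\perp}=J_U$. For the reverse inclusion, taking perpendiculars reduces the problem to verifying $C_r^*(\Sigma_U;G_U)^\perp\subseteq J_U^\perp=J_V$. Given $x\in C_r^*(\Sigma_U;G_U)^\perp$ and any $f\in C_0(U)\subseteq C_r^*(\Sigma_U;G_U)$, the definition of the annihilator gives $xf=0$, and hence $(x^*x)f=0$. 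Applying the $C_0(\go)$-bimodule map $E$ produces $E(x^*x)f=E((x^*x)f)=0$ for every $f\in C_0(U)$; commutativity of $C_0(\go)$ then forces $E(x^*x)\in C_0(U)^{\perp_{C_0(\go)}}=C_0(V)$, i.e.\ $x\in J_V$, as required.

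The argument essentially assembles previously established results, so I do not anticipate any serious technical obstacle. The key point — and the reason the statement survives without any inner exactness hypothesis on $G$, and without assuming the \riip{} for $C_0(\go)$ as in Corollary~\ref{cor: reg small to big} — is the short calculation in the last paragraph: elements of $C_r^*(\Sigma_U;G_U)^\perp$ annihilate the commutative subalgebra $C_0(U)$, and the $C_0(\go)$-bimodule property of $E$ transfers this annihilation to $E(x^*x)$, forcing it into $C_0(V)$. This bypasses the need to prove that the regular ideal $J_U\cap \<C_0(U)\>_{C_r^*(\Sigma;G)}^\perp$ has trivial intersection with $C_0(\go)$ via the \riip, replacing that step with a direct computation available because $B=C_0(\go)$ is commutative.
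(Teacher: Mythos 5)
Your proof is correct, and while the first half (identifying $\ker\phi$ with $J_U$ via Proposition~\ref{Prop: J}, deducing regularity from Proposition~\ref{prop: reg small to big}(ii), and getting $C_r^*(\Sigma_U;G_U)^{\perp\perp}\subseteq \ker\phi$ from $C_r^*(\Sigma_U;G_U)\subseteq\ker\phi$) coincides with the paper's, your reverse containment is genuinely different. The paper argues by contradiction: using Lemma~\ref{lem: reg inc} it produces a non-zero positive $a\in\ker\phi\cap C_r^*(\Sigma_U;G_U)^\perp$, shows $E(a)$ is annihilated by $C_0(U)$ (from $a\perp C_r^*(\Sigma_U;G_U)$) and by $C_0(U^\perp)$ (from $a\in\ker\phi$, via the commutative diagram~\eqref{comm diagram} and the relation $\theta\circ E=E_F\circ\phi$), and then concludes $E(a)=0$ from density of $U\cup U^\perp$ in $\go$ and $a=0$ from faithfulness. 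You instead prove directly that $C_r^*(\Sigma_U;G_U)^\perp\subseteq J_{U^\perp}=J_U^\perp$ using only the $C_0(\go)$-bimodule property of $E$ and the identity $J_K^\perp=J_{K^{\perp_B}}$ from Proposition~\ref{prop: reg small to big}(ii), then dualize. Your route makes no use of $\phi$, the quotient groupoid, or density of $U\cup U^\perp$, and in fact commutativity of $C_0(\go)$ is not needed either (from $xf=xf^*=0$ one gets both $E(x^*x)f=0$ and $fE(x^*x)=0$ directly). Consequently your computation establishes the stronger general statement that $\<K\>_A^{\dperp}=J_K$ for \emph{every} invariant regular ideal $K$ in \emph{any} regular inclusion admitting a faithful invariant conditional expectation --- i.e., it removes the \riip\ hypothesis from Corollary~\ref{cor: reg small to big} --- whereas the paper's argument is specific to the twisted-groupoid picture. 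What the paper's approach buys in exchange is a self-contained argument that sits naturally alongside the exactness discussion surrounding diagram~\eqref{comm diagram}; what yours buys is greater generality and a direct (non-contradiction) proof.
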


\begin{proof}
  By Proposition~\ref{Prop: J}, $\ker \phi =J_{U}$.  Since $U$ is
  regular and invariant, Proposition~\ref{prop: reg small to big}
  shows that $\ker\phi$ is regular.  Since
  $ C_r^*(\Sigma_U; G_U)\subseteq \ker \phi$, we obtain
  $C_r^*(\Sigma_U; G_U)^{\perp\perp}\subseteq \ker \phi$.

  Aiming at a contradiction, assume
  $C_r^*(\Sigma_U; G_U)^{\perp\perp}\neq \ker \phi$.  By
  Lemma~\ref{lem: reg inc},
  \[\ker \phi \cap C_r^*(\Sigma_U; G_U)^{\perp}\neq \{0\}.\]   Fix a non-zero
  positive element $a\in \ker \phi \cap C_r^*(\Sigma_U; G_U)^{\perp}$.
For $e\in C_0(U)$,
$ea=0$ because $a\in C_r^*(\Sigma_U; G_U)^{\perp}$.    On the other
hand, if $f\in C_0(U^\perp)$,  then $f E(a)\in C_0(U^\perp)  \subseteq C_0(F)$, so
  $$f E(a)=\theta(f E(a))=\theta(E(f
  a))=E_F(\phi(f)\phi(a))=0.$$
Thus for any $h\in C_0(U\cup U^\perp)$
\[hE(a)=0.\]   But $U\cup U^\perp$ is dense in $\go$, so $E(a)=0$.
Faithfulness of $E$ gives $a=0$, contrary to hypothesis.
Thus
  $\ker \phi=C_r^*(\Sigma_U; G_U)^{\perp\perp}$, as desired.
\end{proof}

Let $\Sigma \to G$ be a twist and assume that $G$ is exact.
We now show that if $(C_r^*(\Sigma; G), C_0(\go))$ has the \iip\ then the regular ideals of $C^*_r(\Sigma; G)$ have an explicit description in terms of the dynamics of $G$.

\begin{theorem} \label{cor: U reg} 
Let $\Sigma \rightarrow G$ be a twist.
Suppose that $G$ is exact and $U\subseteq \go$ is a regular  invariant open set.
Then $C^*_r(\Sigma_U; G_U)$ is a regular ideal in $C^*_r(\Sigma; G)$.

If, in addition, $C_0(\go)$ has the regular ideal intersection property in $C_r^*(\Sigma; G)$, every regular ideal of $C^*_r(\Sigma; G)$ is of this form.
\end{theorem}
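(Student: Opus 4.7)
The plan is to tie together the commutative diagram \eqref{comm diagram}, the description $\ker\phi = J_U$ from Proposition~\ref{Prop: J}, and the bijection in Theorem~\ref{thm: 1-1 reg ideals}.

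For the first statement, I would argue as follows. Let $F = \go \setminus U$ and let $\phi\colon C_r^*(\Sigma;G)\to C_r^*(\Sigma_F;G_F)$ be the map from \eqref{phidef}. Since $U$ is a regular invariant open set, Proposition~\ref{prop: reg kern} gives
\[ C_r^*(\Sigma_U;G_U)^{\perp\perp} = \ker\phi. \]
On the other hand, inner exactness of $G$ at $U$ (which holds since $G$ is exact) is precisely the statement that the top row of \eqref{comm diagram} is exact at $C_r^*(\Sigma;G)$, that is, $\ker\phi = C_r^*(\Sigma_U;G_U)$. Combining these two identities yields $C_r^*(\Sigma_U;G_U) = C_r^*(\Sigma_U;G_U)^{\perp\perp}$, so it is a regular ideal.

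For the second statement, assume that $C_0(\go)$ has the \riip\ in $C_r^*(\Sigma;G)$. The inclusion $(C_r^*(\Sigma;G), C_0(\go))$ is a regular inclusion, and the canonical conditional expectation $E\colon C_r^*(\Sigma;G)\to C_0(\go)$ is a faithful $\Nbi$-invariant conditional expectation, where $\Nbi$ is the $*$-semigroup of normalizers supported on bisections; moreover $\spn\Nbi$ is dense in $C_r^*(\Sigma;G)$. Thus all hypotheses of Theorem~\ref{thm: 1-1 reg ideals} are satisfied. Given a regular ideal $J\unlhd C_r^*(\Sigma;G)$, that theorem identifies $J$ with $J_K$ for the invariant regular ideal $K:= J\cap C_0(\go)\unlhd C_0(\go)$. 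Writing $K = C_0(V)$ for the corresponding regular open subset $V\subseteq \go$, the equivalence between invariance of $K$ (for $\Nbi$) and $V$ being an invariant open subset of $\go$ (recorded in the preliminary discussion of this section) shows that $V$ is a regular invariant open set, and $J_K = J_V$ in the notation of Proposition~\ref{Prop: J}.

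Finally, Proposition~\ref{Prop: J} applied to the invariant open set $V$ gives $J_V = \ker\phi_V$, where $\phi_V\colon C_r^*(\Sigma;G)\to C_r^*(\Sigma_{\go\setminus V};G_{\go\setminus V})$ is the restriction homomorphism; and exactness of $G$ at $V$ identifies $\ker\phi_V$ with $C_r^*(\Sigma_V;G_V)$, so $J = C_r^*(\Sigma_V;G_V)$, as required. I do not expect a significant obstacle: the main work has been done in Proposition~\ref{Prop: J}, Proposition~\ref{prop: reg kern}, and Theorem~\ref{thm: 1-1 reg ideals}, and this result is essentially their combination once inner exactness is used to replace the abstractly defined ideal $J_U$ with the concrete ideal $C_r^*(\Sigma_U;G_U)$.
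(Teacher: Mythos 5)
Your proposal is correct and follows essentially the same route as the paper: regularity of $C^*_r(\Sigma_U;G_U)$ comes from combining Proposition~\ref{prop: reg kern} with the identification $\ker\phi = C^*_r(\Sigma_U;G_U)$ supplied by exactness, and the classification of all regular ideals comes from Theorem~\ref{thm: 1-1 reg ideals} together with Proposition~\ref{Prop: J}. Your write-up merely makes explicit the translation between invariant regular ideals of $C_0(\go)$ and regular invariant open sets $V\subseteq\go$, which the paper leaves implicit.
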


\begin{proof} Since $G$ is exact, $\ker \phi=C^*_r(\Sigma_U;
G_U)=J_U$. By Proposition~\ref{prop: reg kern},  $C^*_r(\Sigma_U;
G_U)$ is a regular ideal in $C^*_r(\Sigma;G)$.

Theorem~\ref{thm: 1-1 reg ideals} shows that in the presence of the
\riip, all regular ideals are of
the form $C^*_r(\Sigma_U; G_U)$.
\end{proof}

The following corollary to (non-twisted) groupoid C$^*$-algebras is a special case of Theorem~\ref{cor: U reg}.
We record it, however, as many of the examples which motivated this study are in this setting.

\begin{corollary}\label{cor: non-twisted}
Let $G$ be an exact locally compact  Hausdorff \'{e}tale groupoid.
If $U \subseteq \go$ is a regular open invariant set, then $C^*_r(G_U)$ is a regular ideal in $C_r^*(G)$.

If $C_0(\go)$ has the \iip\ inside $C_r^*(G)$ then all regular ideals of $C_r^*(G)$ are of this form.
\end{corollary}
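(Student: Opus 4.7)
The plan is to deduce Corollary~\ref{cor: non-twisted} directly from Theorem~\ref{cor: U reg} by specializing to the trivial twist $\Sigma := G \times \bT$. Under this identification we have $C^*_r(\Sigma;G) = C^*_r(G)$, and for any open invariant $U \subseteq \go$, the restricted twist $\Sigma_U = G_U \times \bT$ yields $C^*_r(\Sigma_U;G_U) = C^*_r(G_U)$, embedded as the ideal $\<C_0(U)\>_{C^*_r(G)}$ by \cite[Lemma~2.7]{BFPR2021graded} (as recalled in the paragraph preceding Definition~\ref{def: exact}).

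For the first assertion, I would invoke the first sentence of Theorem~\ref{cor: U reg} verbatim: since $G$ is exact and $U$ is a regular invariant open set, $C^*_r(\Sigma_U;G_U)$ is a regular ideal of $C^*_r(\Sigma;G)$; translating through the identifications above gives that $C^*_r(G_U)$ is a regular ideal of $C^*_r(G)$.

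For the second assertion, I need to upgrade the hypothesis from the \iip\ to the \riip\ in order to apply the second sentence of Theorem~\ref{cor: U reg}. This is immediate: every regular ideal is in particular an ideal, so if the inclusion $C_0(\go) \subseteq C^*_r(G)$ has the \iip, then $J \cap C_0(\go) = \{0\}$ forces $J = \{0\}$ for every ideal $J$, and hence in particular for every regular ideal $J$. Thus the \riip\ holds, and Theorem~\ref{cor: U reg} supplies, for every regular ideal $J \unlhd C^*_r(G)$, a regular invariant open set $U \subseteq \go$ with $J = C^*_r(G_U)$.

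The only real content lies in Theorem~\ref{cor: U reg} itself; the corollary is a bookkeeping translation to the untwisted case, so I do not anticipate any obstacle beyond making sure the identifications of $\Sigma_U$ with $G_U \times \bT$ and of $C^*_r(\Sigma_U;G_U)$ with $C^*_r(G_U)$ are stated explicitly.
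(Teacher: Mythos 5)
Your proposal is correct and follows exactly the paper's route: the paper records this corollary simply as the special case of Theorem~\ref{cor: U reg} for the trivial twist $\Sigma = G\times\bT$, with no further argument given. Your added remark that the \iip\ implies the \riip\ (so the hypothesis of Theorem~\ref{cor: U reg} is met) is the same trivial implication the paper notes earlier in Section~\ref{RI RE}, so there is nothing missing.
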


In Section~\ref{ss: directed graphs 2}, we discuss how Corollary~\ref{cor: non-twisted} applies to graph C*-algebras.
More generally, Proposition~\ref{exactCartan} gives an application of Theorem~\ref{cor: U reg} for Cartan inclusions in nuclear C$^*$-algebras. 
In Theorem~\ref{cor: crossed prod ideals}, a result analogous to Theorem~\ref{cor: U reg} and Corollary~\ref{cor: non-twisted} for reduced crossed products of discrete group actions on (not necessarily abelian) C$^*$-algebras is given.
If a discrete group $\Gamma$ acts on a locally compact Hausdorff space $X$, then the reduced crossed product $C_0(X) \rtimes_r \Gamma$ is both an example of a reduced groupoid C$^*$-algebra and a reduced crossed-product C$^*$-algebra.
Corollary~\ref{cor: trans groups} gives an application of both Corollary~\ref{cor: non-twisted} and Theorem~\ref{cor: crossed prod ideals} in this setting.

\begin{remark}\label{rem: reg and exact} A careful reader will note
  that we did not need that $G$ was exact for Theorem~\ref{cor: U reg} or Corollary~\ref{cor: non-twisted}, only
  that Diagram~\eqref{comm diagram} is exact for regular open sets $U\subseteq
  \unit G$.
\end{remark}

Our aim in the remainder of this section is to present
Example~\ref{ex: HLS}.
This is an example of a groupoid $G$ that is not inner exact but which is inner exact at all open regular invariant sets $U \subseteq \go$.

\begin{definition}
Let $\Gamma$ be a discrete group.
A sequence of subgroups $(K_n)$ is an \emph{approximating sequence} for $\Gamma$ if
\begin{enumerate}
    \item each $K_n$ is a normal, finite index subgroup of $\Gamma$;
    \item $K_n \supseteq K_{n+1}$ for all $n$; and
    \item $\bigcap_n K_n = \{e\}$.
\end{enumerate}
\end{definition}

Given a discrete group $\Gamma$ with an approximating sequence $(K_n)$ one can construct what is known as a HLS groupoid.
These groupoids were introduced (and named for) Higson, Lafforgue and Skandalis \cite{HLS}.
Higson, Lafforgue and Skandalis use HLS groupoids to give a counter-example to the groupoid version of the Baum-Connes conjecture.
In so doing, they give an example of a HLS groupoid which is not exact.
We give now the definition of a HLS groupoid, as presented by Willett \cite{Wil2015}. 

\begin{definition}
Let $\Gamma$ be a discrete group 
 with an approximating sequence $(K_n)$.
For each $n$ let $\Gamma_n = \Gamma/ K_n$, and let $\pi_n \colon \Gamma \rightarrow \Gamma_n$ be the quotient map.
Let $\Gamma_\infty = \Gamma$ and $\pi_\infty$ be the identity map on $\Gamma$.
Let $\bN^* = \bN \cup \{\infty\}$ be the one-point compatification of $\bN$ and  let $G$ be the disjoint union of $\{n\} \times \Gamma_n$ for all $n\in \bN^*$.
That is,
$$ G = \bigsqcup_{n \in \bN^*} \{n\} \times \Gamma_n. $$
Equip $G$ with the topology generated by the sets
\begin{enumerate}
    \item $\{(n,g)\}$ for $n \in \bN$ and $g\in \Gamma_n$; and
    \item for each $\gamma \in \Gamma$ and $N \in \bN$,
      $\{(n,\pi_n(\gamma)) \colon n \in \bN \cup \{\infty\},\ n \geq
      N\}$.
\end{enumerate}
With this topology, $\{(n, \pi_n(e)):n\in\bN^*\}$ is a clopen set in
$G$, and with its relative topology, it is homeomorphic to
$\bN^*$. We identify $\{(n,\pi_n(e)): n\in \bN^*\}$ with $\bN^*$.

Endow $G$ with a partial product
\[ (n,g_1)(m,g_2) = \begin{cases} (n,g_1g_2) & \text{if }n = m\\
\text{undefined} &  \text{when }n\neq m.
\end{cases}
\]
The inverse operation is given by $(n,g)^{-1} = (n,g^{-1})$.
With these operations, $G$ becomes a Hausdorff \'etale topological groupoid and is
called the \emph{HLS groupoid} associated to $\Gamma$ and $(K_n)$.
The unit space of $G$ is $\bN^*$. 
\end{definition}

We will need the following proposition for Example~\ref{ex: HLS}.

\begin{proposition}\label{prop: HLS reg}
Let $\Gamma$ be a discrete group with an approximating sequence
$(K_n)$ and 
let $G$ be the associated HLS groupoid.
Assume that $C_r^*(G) = C^*(G)$.
Then 
$$ C_r^*(G_U) \rightarrow C_r^*(G) \rightarrow C_r^*(G_{\bN^* \backslash U})$$
is exact for all regular open  sets $U \subseteq \bN^*.$
\end{proposition}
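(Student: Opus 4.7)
The plan is to derive exactness of the reduced sequence from exactness of the full-algebra sequence, which is automatic, using the hypothesis $C^*(G) = C_r^*(G)$ together with structural features of the HLS groupoid.

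First I would classify the regular open sets of $\bN^*$.  Writing $F = \bN^*\setminus U$, two cases arise: (a) $U\subseteq \bN$ with $\bN\setminus U$ infinite, so $F\ni\infty$ and $F\cap\bN$ is cofinal in $\bN$; or (b) $U\ni\infty$, which forces $U$ cofinite and $F\subseteq\bN$ finite.  Since $r=s$ on an HLS groupoid, every subset of $\bN^*$ is automatically invariant, and the full-algebra sequence $0\to C^*(G_U)\to C^*(G)\to C^*(G_F)\to 0$ is exact.  The hypothesis $C^*(G) = C_r^*(G)$ together with openness of $G_U$ yields $C^*(G_U) = C_r^*(G_U)$: the inclusion $C^*(G_U)\hookrightarrow C^*(G) = C_r^*(G)$ factors through the canonical surjection $C^*(G_U)\twoheadrightarrow C_r^*(G_U)$, forcing the latter to be injective and hence an isomorphism.

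With $C^*(G_U) = C_r^*(G_U)$ secured, exactness of the reduced sequence reduces to $C^*(G_F) = C_r^*(G_F)$: once this holds, $\phi_r$ factors as $C_r^*(G)\twoheadrightarrow C_r^*(G)/C_r^*(G_U) = C^*(G_F) = C_r^*(G_F)$, giving $\ker\phi_r = C_r^*(G_U)$.  Case (b) is immediate, since $G_F$ is then a finite disjoint union of finite groups.  For case (a), $G_F$ is itself an HLS groupoid for $\Gamma$ with approximating sub-sequence $(K_m)_{m\in F\cap\bN}$.  My plan is to show $C^*(G) = C_r^*(G)$ is equivalent to the norm identity
\begin{equation*}
\|f\|_{C^*(\Gamma)} = \max\bigl(\lim_n \|\pi_n(f)\|,\; \|f\|_{C_r^*(\Gamma)}\bigr)\qquad (f\in \bC\Gamma),
\end{equation*}
in which the limit exists by the monotonicity $\|\pi_n(f)\|\le\|\pi_{n+1}(f)\|$; this monotonicity follows because $K_{n+1}\subseteq K_n$ induces a quotient $\bC[\Gamma_{n+1}]\twoheadrightarrow\bC[\Gamma_n]$ through which $\pi_n$ factors.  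The same identity, with $\lim_n$ replaced by $\lim_i\|\pi_{m_i}(f)\|$, characterizes $C^*(G_F) = C_r^*(G_F)$; cofinality of $\{m_i\}$ in $\bN$ together with monotonicity shows the two limits agree, so $C^*(G)=C_r^*(G)$ transfers to $C^*(G_F)=C_r^*(G_F)$ and the proof is complete.

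The main obstacle will be justifying the displayed norm characterization.  Both the full and reduced norms on $C_c(G)$ decompose fiber-wise as the maximum of $\sup_n\|a|_n\|$ (equal in the two cases, since each $\Gamma_n$ is finite) and the norm of $a|_\infty$ (in $C^*(\Gamma)$ versus $C_r^*(\Gamma)$).  By absorbing finite-level corrections into $C_c(G_\bN)$ one can, for a prescribed $a|_\infty = f$, realize $\sup_n\|a|_n\| = \lim_n\|\pi_n(f)\|$; this minimization step, combined with a quotient-norm comparison between the surjections $C^*(G)\twoheadrightarrow C^*(\Gamma)$ and $C_r^*(G)\twoheadrightarrow C_r^*(\Gamma)$, will yield the displayed identity.
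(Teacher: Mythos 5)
Your proposal is correct, and its skeleton coincides with the paper's: classify the regular open subsets of $\bN^*$, establish $C^*(G_U)=C_r^*(G_U)$ and $C^*(G_F)=C_r^*(G_F)$ for $F=\bN^*\setminus U$, and then read off exactness of the reduced sequence from the automatic exactness of the full sequence via the commuting diagram. The difference lies in how the two coincidences are obtained. The paper simply cites Willett \cite{Wil2015}: Lemma~2.6 for subsets of $\bN$ and Lemma~2.7 for subgroupoids containing the fibre at $\infty$ (which are again HLS groupoids for a cofinal subsequence of $(K_n)$). You replace the first citation with a factoring trick --- the embedding $C^*(G_U)\hookrightarrow C^*(G)=C_r^*(G)$ forces the canonical surjection $C^*(G_U)\twoheadrightarrow C_r^*(G_U)$ to be injective --- which is clean, handles the open piece uniformly in both of your cases, and is arguably simpler than the paper's Claim~1. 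You replace the second citation with a from-scratch proof of the norm identity $\|f\|_{C^*(\Gamma)}=\max\bigl(\lim_n\|\pi_n(f)\|,\|f\|_{C^*_r(\Gamma)}\bigr)$; this is essentially the content of Willett's Lemma~2.7, and your monotonicity observation (each $\pi_n$ factors through $\pi_{n+1}$) is exactly what makes the passage to a cofinal subsequence immediate. The one step you should spell out is the full-norm formula $\|a\|_{C^*(G)}=\max\bigl(\sup_n\|a_n\|,\|a_\infty\|_{C^*(\Gamma)}\bigr)$: the inequality $\geq$ is clear, and for $\leq$ one needs that the direct sum of a representation faithful on the ideal $\bigoplus_n C^*(\Gamma_n)$ with one factoring faithfully through the quotient $C^*(\Gamma)$ is faithful on $C^*(G)$ --- this is the ``quotient-norm comparison'' you allude to, and it is the crux of the displayed characterization. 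With that in place the argument is complete. What the paper's route buys is brevity; what yours buys is self-containment and a slightly tighter organization (two cases rather than three, with the open piece needing no case analysis at all).
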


\begin{proof}
This result hinges on two claims.

\noindent \textit{Claim 1.} \textit{If $U \subseteq \bN$, then $C_r^*(G_U) = C^*(G_U)$.}

To prove this claim note that if $U$ is finite then the groupoid $G_U$ is finite, and the result is clear.
If $U$ is infinite, then the result follows from \cite[Lemma~2.6]{Wil2015}.
Thus Claim 1 is proved.

\noindent  \textit{Claim 2.} \textit{If $U \subseteq \bN^*$ with $\infty \in U$ and $U \cap \bN$ infinite, then $C_r^*(G_U) = C^*(G_U)$.}

To prove this claim, note that $G_{U}$ is the HLS groupoid associated to $\Gamma$ and the approximating sequence $(K_n)_{n\in U\cap \bN}$.
By assumption $C_r^*(G) = C^*(G)$, so it follows from \cite[Lemma~2.7]{Wil2015} that we also have $C_r^*(G_U) = C^*(G_U)$.
Thus the second claim holds.

Now, let $U \subseteq \bN^*$ be a regular  open  set.
Then $U$ must take one of the following forms:
\begin{enumerate}
    \item $U\subseteq \bN$ and $U$ is finite;
    \item $\infty \in U$ and $\bN^*\backslash U$ is finite;
    \item $U \susbeteq \bN$ and both $U$ and $\bN\backslash U$ are infinite.
\end{enumerate}

In all cases, Claim 1 and Claim 2 show that we have the commutative diagram
\begin{equation}\label{comm diagram 3}
    \begin{CD}
    C^*(G_U) @>>> C^*(G) @>>> C^*(G_{\bN^* \backslash U})\\
    @| @| @| \\
    C_r^*(G_U) @>>> C_r^*(G) @>>> C_r^*(G_{\bN^* \backslash U}).
    \end{CD}
\end{equation}
As the top line of \eqref{comm diagram 3} is exact, the bottom line is also exact.
\end{proof}

\begin{example}\label{ex: HLS}
Let $\bF_2$ be the free group over two generators.
Let $(K_n)$ be the approximating sequence for $\bF_2$ given in \cite[Lemma~2.8]{Wil2015}, and let $G$ be the associated groupoid.
By \cite[Proposition~3.5]{Ana_weak_cont} $G$ is not exact, since $\bF_2$ is not amenable.
However, by \cite[Lemma~2.7 and Lemma~2.8]{Wil2015}, $C_r^*(G)= C^*(G)$.
Hence, by Proposition~\ref{prop: HLS reg} the failure of exactness for $G$ does not happen for open regular invariant sets $U \subseteq \go$.
\end{example}

\section{Applications}\label{applications}
In this section, we give a number of applications.
\subsection{Directed graphs}\label{ss: directed graphs 2}
Let $E$ be a directed graph.   In Remark~\ref{ss: directed graphs}, we showed that if $J$  is a gauge-invariant regular ideal in $C^*(E)$, then the graph $E/J$ satisfies condition~(L) if $E$ does.   In this subsection we use Corollary~\ref{cor: non-twisted} to show that if $E$ satisfies condition~(L) then all regular ideals in  $C^*(E)$ are gauge-invariant recovering  \cite[Proposition~3.7]{BFPR2021reg}.  

To see this, we use the groupoid $G$ constructed from $E$ in  \cite{KPRR1997} which, among other things, has  $C^*(E) \cong C_r^*(G)$  with the isomorphism sending each  vertex projection $p_v$  to a characteristic function on a compact open subset of $\go$. Further, the groupoid $G$ is amenable \cite[Corollary~5.5]{KPRR1997}, and so $G$ is exact, see e.g. \cite[Remark~4.5]{Ren1991}.

Since the gauge-invariant ideals of $C^*(E)$  are precisely the ideals generated by their vertex projections \cite[Theorem~4.1]{BPRS2000},
 the gauge-invariant ideals are ideals of the form $C_r^*(G_U)$ for open invariant sets $U \subseteq \go$.
As noted in Example~\ref{ss: directed graphs}, $D_E$ has the intersection property in $C^*(E)$ if and only if $E$ satisfies condition~(L). 
Thus Corollary~\ref{cor: non-twisted} generalizes \cite[Proposition~3.7]{BFPR2021reg}.
Similarly, for higher-rank graph \cstar-algebras, Corollary~\ref{cor: non-twisted} recovers \cite[Proposition~6.7]{Sch2021}.

In \cite{GonRoy2021} analogous results to the graph \cstar-algebra results of \cite{BFPR2021reg} are shown in the purely algebraic setting of Leavitt path algebras.
In a Leavitt path algebra the correct analogy of a gauge-invariant ideal is a $\bZ$-graded ideal.
It is shown in \cite[Corollary~3.4]{GonRoy2021} that all regular ideals in a Leavitt path algebra are graded, even when the graph does not satisfy condition~(L).
For graph \cstar-algebras, however, condition~(L) is needed.
Indeed, if $E$ is the graph of one vertex and one edge, then $C^*(E) = C(\bT)$.
Here any open regular subset of $\bT$ gives a regular ideal.
None of these, other than $\{0\}$ and $C(\bT)$, are gauge-invariant.

\subsection{Transformation groups}\label{ss: trans groups}
Let $\Gamma$ be a discrete group acting by homeomorphisms on a compact Hausdorff space $X$.
The action of $\Gamma$ on $X$ is \emph{topologically free} if the
interior of $X_t:= \{x\in X \colon t\cdot x = x\}$ is empty for all
non-identity elements $t \in \Gamma$.
The \iip\ for $(C(X)\rtimes_r \Gamma, C(X))$ is
closely related to topological freeness.  
In fact, when the action of $\Gamma$ on $X$ is topologically free,
S. Kawamura and J. Tomiyama show in~\cite[Theorem~4.1]{KawTom1990}
that $(C(X)\rtimes_r \Gamma, C(X))$ has the \iip, and the converse
holds when $\Gamma$ is amenable.  We do not know whether the \iip\ for
$(C(X)\rtimes_r \Gamma, C(X))$ is equivalent to topological freeness
of the action of $\Gamma$ on $X$ for discrete groups $\Gamma$ if the
hypothesis of amenability on $\Gamma$ is dropped.  However, some relaxation of
the amenability hypothesis is possible; see the equivalence of
statements (iv) and (v) of~\cite[Theorem~4.6]{PitZar2015}.

Recall that a closed set $F\subseteq X$ is \textit{regular} if it  is
the closure of its interior; it is easy to see that a closed set $F$ is
regular if and only if $X\setminus F$ is a regular open set.
We now observe that the restriction of a topologically free action to an invariant
regular closed set is again topologically free.     
\begin{proposition}\label{prop: reg and top free}
Let $\Gamma$ be a discrete group acting topologically freely
  on a compact Hausdorff space $X$.  If $Y\subseteq X$ is a regular
  and closed $\Gamma$-invariant set, then the restricted action of
  $\Gamma$ to $Y$ is also topologically free.
\end{proposition}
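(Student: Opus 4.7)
The plan is to argue by contradiction. Suppose the restricted action is not topologically free. Then there is a non-identity element $t \in \Gamma$ and a nonempty relatively open subset $W$ of $Y$ with $W \subseteq Y_t := \{y \in Y : t \cdot y = y\} = X_t \cap Y$. Writing $W = U \cap Y$ for some open set $U \subseteq X$, the goal is to produce a nonempty open set of $X$ contained in $X_t$, contradicting the topological freeness of the original action.

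The key step is to exploit regularity of $Y$. Since $Y$ is a regular closed subset of $X$, we have $Y = \overline{Y^{\circ}}$, where $Y^{\circ}$ denotes the interior of $Y$ in $X$. Fix any point $y_0 \in W \subseteq Y$; since $y_0 \in \overline{Y^{\circ}}$ and $U$ is an open neighborhood of $y_0$ in $X$, the intersection $V := U \cap Y^{\circ}$ is a nonempty open subset of $X$.

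Now I would verify that $V \subseteq X_t$. Every point $x \in V$ lies in $Y^{\circ} \subseteq Y$ and in $U$, so $x \in U \cap Y = W \subseteq X_t$. Hence $V$ is a nonempty open subset of $X$ contained in the fixed point set $X_t$, so the interior of $X_t$ is nonempty, contradicting the topological freeness of the action of $\Gamma$ on $X$. Therefore no such $t$ and $W$ exist, and the restricted action on $Y$ is topologically free.

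The argument is short and there is no serious obstacle; the only subtle point is remembering that ``regular closed'' means $Y = \overline{Y^{\circ}}$ (equivalently, $X \setminus Y$ is a regular open set), so that every relatively open subset of $Y$ meets $Y^{\circ}$, which is what converts the relatively open witness $W$ into a genuinely open subset of $X$.
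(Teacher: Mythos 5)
Your proof is correct and follows essentially the same route as the paper: both convert the relatively open witness $W = U\cap Y$ into the genuinely open set $U\cap \interior{Y}$, which is nonempty precisely because $Y=\overline{\interior{Y}}$, and then observe it sits inside $X_t$, contradicting topological freeness on $X$. No issues.
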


\begin{proof}
  Let $Y$ be an invariant regular closed set.  We need to show that
  the interior of $Y_s$ as a subset of $Y$, $\Interior{(Y_s)}{Y}$, is empty for all $e\neq s\in \Gamma$.  Suppose
  $s\in \Gamma$ and $\Interior{(Y_s)}{Y}\neq\emptyset$.  This means
  there exists an open set $U\subseteq X$ such that
  $U\cap Y\subseteq Y_s$ with $U\cap Y$ nonempty.  Since $Y$ is a
  regular closed set, $U\cap \interior{Y}\neq \emptyset$.  Thus,
\[\emptyset\neq U\cap \interior{Y}\subseteq U\cap Y\subseteq Y_s\subseteq X_s.\]  
This shows the interior of $X_s$ is non-empty.  Since $\Gamma$ acts
topologically freely on $X$, we conclude $s=e$.  \end{proof}

\begin{remark}
  Proposition~\ref{prop: reg and top free} can be used to construct an
  alternate proof of Theorem~\ref{lem: int prop quotient} for
  inclusions $C(X) \subseteq C(X) \rtimes_r \Gamma$.
           \end{remark}

Compare Proposition~\ref{prop: reg and top free} to Example~\ref{ex Disk}.
In Example~\ref{ex Disk} $X= \ol{\bD}$, and $F:=X\backslash \bP$ is a single point. The action of $\Gamma=\bZ$ on this point is (necessarily) trivial, and thus not topologically free.
In this case, however, $F$ is not a regular closed set.

\subsection{Cartan embeddings}
We used Theorem~\ref{lem: int prop quotient} to
show that the quotient of a Cartan inclusion by a regular ideal is
again a Cartan inclusion, see Theorem~\ref{thm: Cartan quotient}.  
We have already discussed in Section~\ref{ss: directed graphs 2} that Theorem~\ref{cor: U reg} recovers \cite[Proposition~6.7]{BFPR2021reg}.
Recall that condition~(L) in \cite[Proposition~6.7]{BFPR2021reg} is precisely the condition that the subalgebra $D_E$ is a Cartan subalgebra of the graph algebra $C^*(E)$.

Suppose $(A,D)$ is a Cartan inclusion and let $K\idealin D$ be an invariant regular ideal.
If $L \unlhd A$ satisfies $L \cap B =K = E(L)$, then $\<K\>_A \subseteq L \subseteq J_K$.
This leads to the obvious question:  does $\innerprod{K}_A=J_K$?
Equivalently, is $\<K\>_A$ a regular ideal of $A$?
In the graph algebra setting, \cite[Proposition~6.7]{BFPR2021reg} precisely says that we always have $\innerprod{K}_A=J_K$ for invariant regular ideals $K$.
We generalize this to more general Cartan inclusions now, using Theorem~\ref{cor: U reg}.

\begin{proposition}  \label{exactCartan}
Let $(A,D)$ be a Cartan inclusion with $A$ nuclear.
The map $J \mapsto J \cap D$ is a bijection between the regular ideals of $A$ and the invariant regular ideals of $D$.
The inverse map is given by $K \mapsto \<K\>_A$.
\end{proposition}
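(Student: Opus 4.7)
The plan is to reduce to the twisted groupoid setting so that Theorem~\ref{cor: U reg} can be applied. By Renault's structure theorem for Cartan inclusions, the pair $(A,D)$ is isomorphic to $(C_r^*(\Sigma;G), C_0(\go))$ for some twist $\Sigma \to G$ over a Hausdorff, \'etale, effective groupoid $G$. Nuclearity of $A$ forces the twisted groupoid to be (measurewise) amenable; in particular $G$ is exact in the sense of Definition~\ref{def: exact}. Under this identification, invariant regular ideals of $D$ correspond bijectively to regular open invariant subsets $U \subseteq \go$ via $K = C_0(U)$, and the embedding result recalled after Definition~\ref{def: exact} (a consequence of \cite[Lemma~2.7]{BFPR2021graded}) gives $\<C_0(U)\>_A = C_r^*(\Sigma_U;G_U)$.

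With this setup in hand, I would next invoke Theorem~\ref{thm: 1-1 reg ideals} to conclude that $J \mapsto J \cap D$ is a Boolean-algebra isomorphism from the regular ideals of $A$ onto the invariant regular ideals of $D$, with inverse $K \mapsto J_K$. The hypotheses are satisfied: Cartan inclusions have the \iip\ (hence the \riip) by Remark~\ref{rem: cartan iip}, and the unique faithful conditional expectation $E \colon A \to D$ is invariant by Example~\ref{CEexs}(ii)(b), taking $N := \N(A,D)$.

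It then remains to identify $J_K$ with $\<K\>_A$. For the regular open invariant set $U \subseteq \go$ corresponding to $K$, Proposition~\ref{Prop: J} gives $J_K = J_U = \ker \phi$, where $\phi \colon C_r^*(\Sigma;G) \to C_r^*(\Sigma_F;G_F)$ is restriction to $F = \go \setminus U$. Exactness of $G$ identifies this kernel with $C_r^*(\Sigma_U;G_U)$, which in turn equals $\<K\>_A$ by the embedding above. Hence $J_K = \<K\>_A$, so the inverse of $J \mapsto J \cap D$ is $K \mapsto \<K\>_A$, as required.

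The main point of the argument is the use of two external ingredients: Renault's theorem identifying Cartan inclusions with reduced twisted groupoid \cstar-algebras, and the equivalence (in the Cartan setting) between nuclearity of $A$ and amenability of the underlying twisted groupoid. Once these are imported, the result follows cleanly from the lattice isomorphism in Theorem~\ref{thm: 1-1 reg ideals} combined with the exactness description in Theorem~\ref{cor: U reg}; no direct operator-algebraic verification that $\<K\>_A$ is a regular ideal is required.
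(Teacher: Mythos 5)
Your proposal is correct and follows essentially the same route as the paper: pass to the twisted groupoid model via Renault's theorem, use nuclearity to get amenability and hence exactness of $G$, and then combine Theorem~\ref{thm: 1-1 reg ideals} with Theorem~\ref{cor: U reg}. The extra details you supply (identifying $J_K$ with $\ker\phi$ and hence with $\langle K\rangle_A = C^*_r(\Sigma_U;G_U)$) are exactly what the paper's citation of Theorem~\ref{cor: U reg} is meant to encapsulate.
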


\begin{proof}
By \cite{Ren2008} (see also \cite[Corollary~7.6]{KwaMey2020}), there
  is a twist $\Sigma \rightarrow G$, with $G$ Hausdorff, \'etale, and
  effective so that $A$ is isomorphic to
  $C^*_r(\Sigma;G)$ via an isomorphism which carries $D$ to
  $C_0(\go)$.
By \cite[Theorem~5.4]{Tak2014} $G$ is amenable if and only if $A$ is
  nuclear.  Further, when $G$ is amenable, $G$ is exact, see,
  e.g. \cite[Remark~4.11]{Ren1991}. 

 Recall from Remark~\ref{rem: cartan iip} that $(A,D)$ has the \iip.
 Thus by applying 
Theorem~\ref{thm: 1-1 reg ideals} and Theorem~\ref{cor: U reg}, we obtain
the result. 
\end{proof}

\subsection{Crossed products of \cstar-algebras}
So far, the applications in this section have dealt with inclusions
$B \subseteq A$ where $B$ is abelian.  Here we sketch how our
arguments can be applied to reduced crossed products of possibly
nonabelian $C^*$-algebras by discrete exact groups to obtain an
analogue of Theorem~\ref{cor: U reg}.

Let $\Gamma$ be a discrete group acting by automorphisms on a \cstar-algebra $A$.
Let $A \rtimes_r \Gamma$ denote the reduced crossed product and let $E_A \colon A \rtimes_r \Gamma \rightarrow A$ be the usual faithful conditional expectation \cite[Proposition~4.1.9]{BrownOzawa}.
Recall from Example~\ref{ex: crossed prod}, that $E_A$ is invariant under $N = \{a \delta_s \colon s \in \Gamma\}$.
Note that, an ideal of $A$ is $N$-invariant if and only if it is invariant under the action of $\Gamma$.
By Corollary~\ref{cor: N-invariant is invariant} it then follows that an ideal in $A$ is invariant under the action of $\Gamma$ if and only if it is an invariant ideal in the sense of Definition~\ref{def invariant}.

If $K \unlhd A$ is a $\Gamma$-invariant ideal we get the following commutative diagram.

\begin{equation}
\label{comm diagram 2}
\xymatrix{
K \rtimes_r \Gamma \ar[r]\ar[d]^{E_K} & A \rtimes_r \Gamma\ar@{->>}[r]^\phi \ar[d]^{E_{A}} & A/K \rtimes_r \Gamma\ar[d]^{E_{A/K}}\\
 K \ar[r] & A\ar@{->>}[r]^\theta& A/K
}
\end{equation}
The bottom line of \eqref{comm diagram 2} will always be exact.
However, it is possible that the top line may not be exact.  As with
algebras associated with groupoids (see Definition~\ref{def: exact}),
if the top line of \eqref{comm diagram 2} is exact for all invariant
ideals $K \unlhd A$ we say that the action of $\Gamma$ on $A$ is
\emph{exact}.  In particular, this will happen if $\Gamma$ is exact.
See \cite{Sie2010} for a study of exact actions and the ideal
structure of $A \rtimes_r \Gamma$.

\begin{theorem}\label{cor: crossed prod ideals}
Let $\Gamma$ be a discrete group that acts by an exact action on a \cstar-algebra $A$.
If $K \unlhd A$ is an invariant regular ideal, then $K \rtimes_r \Gamma$ is a regular ideal in $A \rtimes_r \Gamma$.

If in addition $A \subseteq A \rtimes_r \Gamma$ has the ideal intersection property, every regular ideal in $A \rtimes_r \Gamma$ has this form.
\end{theorem}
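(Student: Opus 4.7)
The proof plan is to mirror the structure of the argument for Theorem~\ref{cor: U reg}, replacing the twisted groupoid setup by the crossed product setup, and using the commutative diagram \eqref{comm diagram 2} in place of \eqref{comm diagram}. First I would verify that the setup of Section~\ref{RI RE} applies: as recorded in Example~\ref{ex: crossed prod}, the $*$-semigroup $N = \{a\delta_s : a \in A, s \in \Gamma\}$ has dense span in $A \rtimes_r \Gamma$, consists of normalizers of $A$, and $E_A$ is a faithful $N$-invariant conditional expectation. Thus $(A \rtimes_r \Gamma, A)$ is a regular inclusion to which Proposition~\ref{prop: reg small to big}, Corollary~\ref{cor: N-invariant is invariant}, and Theorem~\ref{thm: 1-1 reg ideals} apply. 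In particular, a $\Gamma$-invariant ideal of $A$ is exactly an invariant ideal in the sense of Definition~\ref{def invariant}.

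For the first statement, fix an invariant regular ideal $K \unlhd A$. Running the crossed-product analogue of Proposition~\ref{Prop: J}, I would show $\ker \phi = J_K$: if $\phi(x) = 0$ then $\theta(E_A(x^*x)) = E_{A/K}(\phi(x^*x)) = 0$, so $E_A(x^*x) \in \ker \theta = K$, giving $x \in J_K$; conversely, $x \in J_K$ forces $E_{A/K}(\phi(x^*x)) = \theta(E_A(x^*x)) = 0$, and faithfulness of $E_{A/K}$ yields $\phi(x) = 0$. By the exactness hypothesis, the top row of \eqref{comm diagram 2} is exact at $A \rtimes_r \Gamma$, so $\ker \phi = K \rtimes_r \Gamma$. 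Finally, since $K$ is an invariant regular ideal, Proposition~\ref{prop: reg small to big}(ii) gives that $J_K$ is a regular ideal of $A \rtimes_r \Gamma$. Chaining these: $K \rtimes_r \Gamma = \ker \phi = J_K$ is a regular ideal.

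For the second statement, assume in addition that $A \subseteq A \rtimes_r \Gamma$ has the ideal intersection property (and hence the regular ideal intersection property). Then Theorem~\ref{thm: 1-1 reg ideals} applies, and the map $J \mapsto J \cap A$ is a Boolean algebra isomorphism from the regular ideals of $A \rtimes_r \Gamma$ onto the invariant regular ideals of $A$, with inverse $K \mapsto J_K$. So if $J$ is a regular ideal of $A \rtimes_r \Gamma$ and $K := J \cap A$, then $J = J_K$, and by the first part $J_K = K \rtimes_r \Gamma$.

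I do not expect a serious obstacle; the statement is essentially bookkeeping once one has Theorem~\ref{thm: 1-1 reg ideals} and Proposition~\ref{prop: reg small to big} in hand, with exactness identifying $\ker \phi$ with $K \rtimes_r \Gamma$. The only point that requires any care is the identification $\ker \phi = J_K$ via the two faithful conditional expectations in diagram \eqref{comm diagram 2}, but this is the direct analogue of Proposition~\ref{Prop: J} and is a short calculation.
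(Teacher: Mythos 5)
Your proposal is correct and follows essentially the same route as the paper: the authors likewise identify $\ker\phi$ with $J_K$ by the argument of Proposition~\ref{Prop: J}, use exactness to get $\ker\phi = K\rtimes_r\Gamma$, invoke Proposition~\ref{prop: reg small to big} (via the argument of Proposition~\ref{prop: reg kern}) for regularity, and apply Theorem~\ref{thm: 1-1 reg ideals} for the second statement.
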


\begin{proof}
  Let $K$ be an invariant regular ideal in $A$. Since the action of
  $\Gamma$ is exact we get that $\ker \phi=K\rtimes_r\Gamma$.  Arguing
  as in Proposition~\ref{prop: reg kern} one shows 
  $K\rtimes_r\Gamma=\ker \phi$ is a regular ideal, giving the first
  statement.

Additionally, assume that $A \subseteq A \rtimes_r \Gamma$ has the
ideal intersection property, and let $J_K$ be as in Notation~\ref{not:
  J_K}.  Arguing as in Proposition~\ref{Prop: J} we obtain $J_K=\ker \phi$.  Now Theorem~\ref{thm: 1-1 reg ideals} gives that all regular ideals are of this form. 
\end{proof}

Let $\Gamma$ be a discrete group acting by homeomorphisms on a compact space $X$.
The reduced crossed-product $C(X) \rtimes_r \Gamma$ is isomorphic to the reduced groupoid C$^*$-algebra $C_r^*(\Gamma \times X)$, where $\Gamma \times X$ is the transformation group.
Thus, crossed products of abelian C$^*$-algebras are special cases of groupoid C$^*$-algebras.
We have the following corollary  which is a special case of both Theorem~\ref{cor: crossed prod ideals} and Corollary~\ref{cor: non-twisted}.

\begin{corollary}\label{cor: trans groups}
Let $\Gamma$ be an exact discrete group acting on a compact Hausdorff space $X$ by homeomorphisms.
If $U \subseteq X$ is an open regular $\Gamma$-invariant subset, then $C_0(U) \rtimes_r \Gamma$ is a regular ideal in $C_0(X) \rtimes_r \Gamma$.

If the action of $\Gamma$ on $X$ is topologically free, then all regular ideals of $C_0(X)\rtimes_r \Gamma$ are of this form.
\end{corollary}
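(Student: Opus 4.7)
The plan is to view this corollary as a direct consequence of Theorem~\ref{cor: crossed prod ideals} (and equivalently of Corollary~\ref{cor: non-twisted}), since essentially everything has been set up in the preceding discussion. The only content is to translate the hypotheses on the dynamical system $(X,\Gamma)$ into the hypotheses on the crossed product inclusion appearing in those theorems.

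For the first statement, I would first verify that $C_0(U)$ is an invariant regular ideal of $C_0(X)$ in the sense of Definition~\ref{def invariant}. Regularity of $C_0(U)$ as an ideal follows from the remark after the definition of regular open sets: regular open $U \subseteq X$ gives the regular ideal $C_0(U) \unlhd C_0(X)$. Invariance under the translation $*$-semigroup $N = \{a\delta_s : a\in C_0(X),\, s\in\Gamma\}$ is immediate from the fact that $U$ is $\Gamma$-invariant, and by Corollary~\ref{cor: N-invariant is invariant}, $N$-invariance coincides with invariance under all normalizers. Next, I would note that exactness of $\Gamma$ as a discrete group ensures exactness of the induced action on $C_0(X)$, in the sense defined just before Theorem~\ref{cor: crossed prod ideals} (this is explicitly recorded there). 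Applying Theorem~\ref{cor: crossed prod ideals} with $A=C_0(X)$ and $K=C_0(U)$ then yields that $C_0(U) \rtimes_r \Gamma$ is a regular ideal in $C_0(X)\rtimes_r \Gamma$, which is the first claim.

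For the second statement, the extra ingredient I need is that topological freeness of the action of $\Gamma$ on $X$ implies the ideal intersection property for the inclusion $C_0(X) \subseteq C_0(X)\rtimes_r \Gamma$. This is precisely the result of Kawamura and Tomiyama (\cite[Theorem~4.1]{KawTom1990}) already quoted in Section~\ref{ss: trans groups}, and requires no amenability or exactness hypothesis. With the \iip{} in hand, the second half of Theorem~\ref{cor: crossed prod ideals} applies and tells us that every regular ideal of $C_0(X)\rtimes_r \Gamma$ is of the form $K \rtimes_r \Gamma$ for some invariant regular ideal $K \unlhd C_0(X)$. Finally, invariant regular ideals of $C_0(X)$ are exactly the ideals $C_0(U)$ with $U \subseteq X$ an open regular $\Gamma$-invariant set, which completes the description.

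There is no real obstacle: all of the substantive work has been done in Theorem~\ref{cor: crossed prod ideals}, Corollary~\ref{cor: N-invariant is invariant}, and the classical Kawamura--Tomiyama theorem. The only point one has to be mildly careful about is the bookkeeping between the three equivalent notions of invariance (under $\Gamma$, under $N$, under all normalizers of $C_0(X)$ in $C_0(X)\rtimes_r \Gamma$), but this is precisely what Corollary~\ref{cor: N-invariant is invariant} is designed to handle. One could equally well give the proof through the groupoid picture by identifying $C_0(X)\rtimes_r \Gamma$ with $C_r^*(\Gamma \ltimes X)$, noting that exactness of $\Gamma$ passes to the transformation groupoid, and then invoking Corollary~\ref{cor: non-twisted}; the two approaches are parallel and yield the same result.
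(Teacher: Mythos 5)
Your proposal is correct and follows essentially the same route as the paper: exactness of the discrete group $\Gamma$ yields exactness of the action (equivalently, of the transformation groupoid), topological freeness gives the \iip\ via Kawamura--Tomiyama, and the conclusion follows from Theorem~\ref{cor: crossed prod ideals} (or, equivalently, Corollary~\ref{cor: non-twisted} in the groupoid picture). The extra bookkeeping you supply about the equivalence of the various invariance notions via Corollary~\ref{cor: N-invariant is invariant} is left implicit in the paper but is accurate.
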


\begin{proof}
The groupoid $\Gamma \times X$ is exact since the group $\Gamma$ is exact.
This follows immediately on comparing Definition~\ref{def: exact} with Definition~1.5 of \cite{Sie2010}.
If the action of $\Gamma$ is topologically free, \cite[Theorem~4.1]{KawTom1990} shows $C(X)$ has the \iip\ in $C(X)\rtimes \Gamma$.
Thus, the result follows by Corollary~\ref{cor: non-twisted} or Theorem~\ref{cor: crossed prod ideals}.
\end{proof}

\begin{remark}
We compare Theorem~\ref{cor: crossed prod ideals} to results of Sierakowski \cite{Sie2010}.
The inclusion $A \subseteq A \rtimes_r \Gamma$ has the \emph{residual intersection property} if $A/J \subseteq A/J \rtimes_r \Gamma$ has the ideal intersection property for every invariant ideal $J \unlhd A$.
In \cite[Theorem~1.3]{Sie2010} it is shown that all ideals of $A \rtimes_r \Gamma$ are of the form $J \rtimes_r \Gamma$ for an invariant ideal $J \unlhd A$ if and only if the action of $\Gamma$ is exact on $A$ and $A \subseteq A \rtimes_r \Gamma$ has the residual intersection property.
Theorem~\ref{cor: crossed prod ideals} shows that the ideal intersection property suffices for regular ideals; the residual ideal intersection property is only needed for non-regular ideals.
\end{remark}

The question of when $A \subseteq A \rtimes_r \Gamma$ has the ideal intersection property has been studied by several authors.
 The ideal intersection property is determined by the action of $\Gamma$ on some injective objects.
However, instead of topological freeness, one must instead look to see if the action is \emph{properly outer}.
See \cite[Section~2.3]{Zar2019} or \cite[Definition~2.8]{KenSch2019} for definitions of properly outer actions.
Let $I(A)$ be the injective envelope of $A$ and let $I_\Gamma(A)$ be the $\Gamma$-injective envelope of $A$.
We summarize some of the results of Zarikian \cite{Zar2019} and Kennedy and Schafhauser \cite{KenSch2019}.

\begin{theorem}[{cf. \cite{Zar2019} and
    \cite{KenSch2019}}] \label{nonabel iip}
Let $\Gamma$ be a discrete group acting on a \cstar-algebra $A$.
Then the following are equivalent:
\begin{enumerate}
    \item $A \subseteq A \rtimes_r \Gamma$ has the \iip;
    \item $I(A) \subseteq I(A) \rtimes_r \Gamma$ has the \iip;
    \item $I_\Gamma(A) \subseteq I_\Gamma(A) \rtimes_r \Gamma$ has the \iip.
\end{enumerate}
In particular, this will happen if the action of $\Gamma$ on $I(A)$ is properly outer.
\end{theorem}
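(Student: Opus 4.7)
The plan is to exploit the tower of essential equivariant inclusions $A \subseteq I_\Gamma(A) \subseteq I(A)$ and the lift to reduced crossed products. Recall that the $\Gamma$-injective envelope $I_\Gamma(A)$ is by construction equipped with an extension of the $\Gamma$-action on $A$, and the injective envelope $I(A)$ inherits an extension of the $\Gamma$-action by rigidity of injective envelopes. By definition, these inclusions are essential at the level of ideals: every nonzero ideal of $I(A)$ meets $A$ nontrivially, and likewise for the intermediate step $I_\Gamma(A)$.

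First I would observe that the equivariant inclusions $A \hookrightarrow I_\Gamma(A) \hookrightarrow I(A)$ induce faithful $*$-homomorphisms
\[
A \rtimes_r \Gamma \hookrightarrow I_\Gamma(A) \rtimes_r \Gamma \hookrightarrow I(A) \rtimes_r \Gamma,
\]
intertwining the three canonical faithful conditional expectations $E_A$, $E_{I_\Gamma(A)}$, $E_{I(A)}$ onto the respective base algebras. This follows since the canonical expectation on a reduced crossed product is constructed functorially from equivariant embeddings; one can check compatibility on the dense $*$-subalgebra spanned by elements of the form $a \delta_t$.

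Next I would prove the chain of equivalences (i)$\Leftrightarrow$(ii)$\Leftrightarrow$(iii) through the following general principle: if $B_1 \subseteq B_2$ is an equivariant essential inclusion, with an equivariant conditional expectation, then the \iip\ for $B_1 \subseteq B_1 \rtimes_r \Gamma$ is equivalent to the \iip\ for $B_2 \subseteq B_2 \rtimes_r \Gamma$. One direction uses that a nonzero ideal $J \unlhd B_1 \rtimes_r \Gamma$ generates a nonzero ideal in $B_2 \rtimes_r \Gamma$ whose intersection with $B_2$ is an equivariant ideal of $B_2$ hitting $B_1$ nontrivially by essentiality. The converse direction uses that an ideal $J' \unlhd B_2 \rtimes_r \Gamma$ with $J' \cap B_2 = \{0\}$ would yield an ideal $J' \cap (B_1 \rtimes_r \Gamma) \unlhd B_1 \rtimes_r \Gamma$ whose intersection with $B_1$ is zero; one must then argue this intersection is nonzero when $J'$ itself is, which uses essentiality at the $B_1 \subseteq B_2$ level combined with a pseudo-expectation argument of the type in \cite{PitZar2015}. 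The main obstacle will be controlling this pullback of ideals cleanly, and this is precisely where the results of Zarikian and of Kennedy--Schafhauser provide the needed machinery.

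Finally, for the implication from properly outer action of $\Gamma$ on $I(A)$ to the \iip, I would invoke the classical fact (see \cite{KenSch2019} and \cite{Zar2019}) that for an injective C$^*$-algebra $M$ with a properly outer $\Gamma$-action, the inclusion $M \subseteq M \rtimes_r \Gamma$ satisfies the \iip. Applying this to $M = I(A)$ gives (ii), and by the equivalence proven above this yields (i). The delicate point here is that proper outerness is the right condition on injective algebras to force nonzero ideals in the crossed product to meet the base, replacing the topological freeness hypothesis used in the commutative setting of \cite{KawTom1990}.
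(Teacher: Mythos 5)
The paper does not actually prove this theorem: its entire proof consists of the citations \cite[Theorem~5.5]{KenSch2019} for the equivalence of (i)--(iii) and \cite[Theorem~6.4]{Zar2019} or \cite[Theorem~6.4]{KenSch2019} for the properly outer statement (the theorem is explicitly flagged as ``cf.''). Your proposal, read closely, ends in the same place --- at the decisive step you write that this ``is precisely where the results of Zarikian and of Kennedy--Schafhauser provide the needed machinery'' --- so in substance you rely on the same external results the paper does. The problem is that the scaffolding you wrap around those citations contains concrete errors, so it cannot stand as an independent argument.

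Specifically: (1) the tower is backwards --- Hamana's construction gives $A \subseteq I(A) \subseteq I_\Gamma(A)$, since the $\Gamma$-injective envelope is in particular injective and therefore contains a copy of $I(A)$; it is not the case that $I_\Gamma(A) \subseteq I(A)$. (2) Your ``general principle'' assumes an equivariant conditional expectation from $B_2$ onto $B_1$, but for the inclusion $A \subseteq I(A)$ there is in general no conditional expectation onto $A$ at all; rigidity of the injective envelope yields ucp projections of larger algebras \emph{onto} $I(A)$, not a projection of $I(A)$ onto $A$. So the principle cannot be applied to the inclusions you need, and only pseudo-expectations (in the sense of \cite{PitZar2015}) are available. (3) Even granting the setup, the transfer argument is incomplete in the hard direction: if $J \unlhd B_1 \rtimes_r \Gamma$ is nonzero with $J \cap B_1 = \{0\}$, the ideal it generates in $B_2 \rtimes_r \Gamma$ may a priori meet $B_1$ in something strictly larger than $J \cap B_1$, so essentiality of $B_1 \subseteq B_2$ yields no contradiction; closing this gap is exactly the technical content of \cite[Theorem~5.5]{KenSch2019}. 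If you intend the statement as a quotation of known results, as the paper does, it suffices to cite them; if you intend to reprove them, all three points above must be repaired.
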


\begin{proof}
The equivalence of (i), (ii) and (iii) is \cite[Theorem~5.5]{KenSch2019}.
For the second part see \cite[Theorem~6.4]{Zar2019} or \cite[Theorem~6.4]{KenSch2019}.
\end{proof}

\section{Settings where the regular ideal intersection property
  and the \iip\ coincide}\label{RIIP and IIP}

For most of our theorems we require the seemingly weaker \riip\ instead of \iip. The main purpose of this section is to describe some classes of
inclusions for which the \iip\ and \riip\ coincide.
As noted in the introduction, the \iip\ has become an important
tool for obtaining structural results about inclusions, and it seems
to us that the equivalence of the \iip\ and the \riip\ is also an
interesting structural result in its own right.

We first give an example of an inclusion with the \riip\ but without
the \iip.

\begin{example}\label{ex: riip neq iip}
  Let $H$ be a separable Hilbert space.  Let $B \subseteq B(H)$ be any
  unital, non-zero \cstar-algebra having trivial intersection with the
  compact operators, $K(H)$.  The only ideals in $B(H)$ are $\{0\},\ K(H),$ and
  $B(H)$.  Of these, $\{0\}$ and $B(H)$ are regular ideals in $B(H)$, while $K(H)$ is
  not a regular ideal.     As $B(H) \cap B = B \neq \{0\}$, the inclusion
  $(B(H), B)$ has the \riip, but it does not have the \iip\ because 
  $K(H) \cap B = \{0\}$.

We can choose $B$ above so that $B$ is abelian and the inclusion is regular. For a straightforward
  example, the inclusion $(B(H), B)$ will be regular when $B = \bC I$
  since unitaries normalize $B$ and every $T\in B(H)$ is a linear
  combination of four unitary operators. 
\end{example}

We recall that a topological space is called \emph{semiregular} if it has a basis of regular open sets.

\begin{proposition} \label{prop: semireg RIIP}
Let $B \subseteq A$ be an inclusion of \cstar-algebras.
Assume that $\Prim(A)$  with the hull kernel topology is semiregular.
Then $B\subseteq A$ has the ideal intersection property if and only if it has the regular ideal intersection property.
\end{proposition}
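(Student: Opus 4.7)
The plan is to observe that one implication is immediate and to reduce the other to the Boolean-algebra correspondence set up in Proposition~\ref{prop: bool}. Since every regular ideal is an ideal, the \iip\ trivially implies the \riip, so the content is in the reverse direction. Assume $(A,B)$ has the \riip\ and let $J\idealin A$ be any non-zero ideal. The goal is to produce a non-zero regular ideal $J'\idealin A$ with $J'\subseteq J$, since then the \riip\ yields $\{0\}\neq J'\cap B\subseteq J\cap B$.

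To produce such a $J'$, I would pass to $\Prim(A)$. By \cite[Theorem~5.2.7]{MurphyBook} (cited just before Proposition~\ref{prop: bool}), $J$ corresponds to a non-empty open set $U:=\Prim(A)\setminus\hull(J)\subseteq\Prim(A)$. Because $\Prim(A)$ is semiregular by hypothesis, the regular open sets form a basis for its topology, so there is a non-empty regular open set $V\subseteq U$. Let $J'$ be the ideal corresponding to $V$ under the map of Proposition~\ref{prop: bool}; that proposition tells us $J'$ is a regular ideal of $A$. Since $V\subseteq U$, the containment of open sets translates to $J'\subseteq J$, and since $V\neq\emptyset$, $J'\neq\{0\}$. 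Applying the \riip\ to $J'$ finishes the argument.

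There is no real obstacle here: the proof is essentially a one-line translation of the topological semiregularity condition through the Boolean-algebra isomorphism of Proposition~\ref{prop: bool}. The only thing to be careful about is that the correspondence $I\mapsto\Prim(A)\setminus\hull(I)$ is inclusion-preserving in the required direction, which is standard.
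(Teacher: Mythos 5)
Your proposal is correct and follows essentially the same route as the paper's proof: both reduce to finding, via semiregularity of $\Prim(A)$, a non-empty regular open subset of the open set corresponding to the given ideal, then pulling it back through the correspondence of Proposition~\ref{prop: bool} to get a non-zero regular ideal contained in the given one. The only cosmetic difference is that the paper names the contained regular ideal as $\ker(\Prim(A)\setminus U)$ explicitly, while you invoke the inclusion-preserving correspondence abstractly; both are justified by \cite[Theorem~5.2.7]{MurphyBook}.
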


\begin{proof}
Since the ideal intersection property implies the regular ideal intersection property, it suffices to show that for  $B\subset A$ the regular ideal intersection property implies the ideal intersection property.

Assume $B \subseteq A$ has the regular ideal intersection property and let $I \unlhd A$ be a non-trivial ideal.
Then $\Prim(A) \backslash \hull(I)$ is an open subset of $\Prim(A)$.
By the semiregularity of $\Prim(A)$ there is a regular open set $U \subseteq \Prim(A) \backslash \hull(I)$.
By Proposition~\ref{prop: bool}, the ideal  $J = \ker(\Prim(A) \backslash U)$ is a regular ideal $A$, and by \cite[Theorem~5.4.7~(3)]{MurphyBook} $J \subseteq I$.
By the regular ideal intersection property $J \cap B \neq \{0\}$, and hence $I \cap B \neq \{0 \}$.
Hence $B \subseteq A$ has the ideal intersection property.
\end{proof}

\begin{corollary}\label{cor: abelian RIIP}
Let $B \subseteq A$ be an inclusion of \cstar-algebras.
Assume that $\Prim(A)$  with the hull kernel topology is Hausdorff.
Then $B\subseteq A$ has the ideal intersection property if and only if it has the regular ideal intersection property.

In particular, if $A$ is abelian, then $B\subseteq A$ has the ideal intersection property if and only if it has the regular ideal intersection property.
\end{corollary}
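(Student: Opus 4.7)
The plan is to reduce this corollary to Proposition~\ref{prop: semireg RIIP} by verifying that a Hausdorff primitive ideal space is automatically semiregular, and that commutative \cstar-algebras fall under this setting.

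First I would recall two standard facts. The primitive ideal space $\Prim(A)$ of any \cstar-algebra, equipped with the hull-kernel topology, is always locally compact (see, e.g., \cite[Theorem 5.4.6]{MurphyBook}). Hence, under the added hypothesis that $\Prim(A)$ is Hausdorff, it is a locally compact Hausdorff space, and in particular a regular (T3) topological space.

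The key topological step is to show that every regular Hausdorff space is semiregular. Given $x\in \Prim(A)$ and an open neighborhood $U$ of $x$, regularity yields an open set $V$ with
\[ x\in V\subseteq \overline{V}\subseteq U.\]
Set $W:=\interior(\overline{V})$. Then $W$ is a regular open set: since $W\subseteq \overline{V}$ we get $\overline{W}\subseteq \overline{V}$ and so $\interior(\overline{W})\subseteq \interior(\overline{V})=W$, while $W$ being open gives $W\subseteq \interior(\overline{W})$. Moreover $x\in V\subseteq W\subseteq \overline{V}\subseteq U$. Thus every open neighborhood of $x$ contains a regular open neighborhood of $x$, so $\Prim(A)$ is semiregular. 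Proposition~\ref{prop: semireg RIIP} then delivers the equivalence of the \iip\ and the \riip\ for $B\subseteq A$.

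For the ``in particular'' clause, when $A$ is abelian the primitive ideal space $\Prim(A)$ is homeomorphic to the Gelfand spectrum of $A$, which is a locally compact Hausdorff space. The first part of the corollary therefore applies. There is no real obstacle here; the only subtlety is the short verification that $\interior(\overline{V})$ is regular open, which is the standard ``regularization'' trick that turns a $T_3$ hypothesis into a semiregularity statement.
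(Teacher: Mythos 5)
Your proposal is correct and follows essentially the same route as the paper: both reduce to Proposition~\ref{prop: semireg RIIP} via the chain ``$\Prim(A)$ locally compact $+$ Hausdorff $\Rightarrow$ regular $\Rightarrow$ semiregular,'' with the paper citing Steen--Seebach for the last implication where you spell out the regularization argument. The only difference is that you supply the short verification that $\interior{(\overline{V})}$ is a regular open set, which the paper leaves to a reference.
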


\begin{proof}
The primitive ideal space $\Prim(A)$ is always locally compact.  Thus if $\Prim(A)$ is Hausdorff then $\Prim(A)$ is semiregular. This follows since locally compact Hausdorff spaces are regular, and regular spaces are semiregular, see \cite[pg. 16]{SteenSeebach}.
\end{proof}
\begin{remark}  By combining~Corollary~\ref{cor: abelian RIIP}
             with~\cite[Corollary~3.21]{PitZar2015} we obtain several
             checkable 
             characterizations of unital inclusions $B\subseteq A$
             with $A$ abelian 
             having the \riip\ or equivalently, the \iip.
\end{remark}

\begin{remark}
             We can apply Corollary~\ref{cor: abelian RIIP} to show
           that \riip\ and \iip\ are equivalent for inclusions in a
           variety of crossed products that are known to have
           Hausdorff spectrum.  For example, Williams characterizes
           when the crossed product of a transformation group
           $(\Gamma,X,\alpha)$ with $\Gamma$ second countable and
           abelian has Hausdorff spectrum \cite{Williams82}.  If the
           stability subgroups of $(\Gamma,X,\alpha)$ are all
           subgroups of a fixed abelian group, Williams characterizes
           when the crossed product is continuous trace (and so by
           definition has Hausdorff spectrum)
           \cite[Theorem~5.1]{Williams81}.  Echterhoff uses Williams'
           result to give conditions that guarantee crossed products
           are continuous trace for transformation groups
           $(\Gamma, X,\alpha)$ with $\Gamma$ a Lie group
           \cite[Corollary~3]{Echterhoff94} or $\Gamma$ is a discrete
           group \cite[Theorem~3]{Echterhoff94}.  Moreover, Archbold
           and an Huef \cite[Theorem~3.9]{Archbold_an_Huef08}
           characterize when the crossed product has continuous trace
           when the action of $\Gamma$ on $\Prim(A)$ is
           free.\footnote{We would like to thank the anonymous referee
             for pointing out the above applications.}  
           \end{remark}
           
           We state one
           application using a theorem of Green
           \cite{Gre1977}.

\begin{corollary}\label{cor: fplcgact}
Let $\Gamma$ be a locally compact group with with a free, proper
action on a locally compact Hausdorff space $X$.
Let $B \subseteq C_0(X) \rtimes \Gamma$ be a \cstar-subalgebra.
Then $B \subseteq C_0(X) \rtimes \Gamma$ has the ideal intersection property if and only if it has the regular ideal intersection property.
\end{corollary}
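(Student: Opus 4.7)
The plan is to reduce the corollary to Corollary~\ref{cor: abelian RIIP} by showing that under the hypotheses, $\Prim(C_0(X)\rtimes\Gamma)$ is Hausdorff. The key tool here will be Green's theorem from \cite{Gre1977}, which asserts that when $\Gamma$ acts freely and properly on the locally compact Hausdorff space $X$, the crossed product $C_0(X)\rtimes\Gamma$ is Morita equivalent to $C_0(X/\Gamma)$.

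First I would note that since the action of $\Gamma$ on $X$ is proper, the orbit space $X/\Gamma$ is locally compact Hausdorff. Then I would invoke Green's theorem to produce a $C_0(X)\rtimes\Gamma$--$C_0(X/\Gamma)$-imprimitivity bimodule. Since Morita equivalent $\ca$-algebras have homeomorphic primitive ideal spaces (via the Rieffel correspondence), this gives
\[
\Prim\bigl(C_0(X)\rtimes\Gamma\bigr) \;\cong\; \Prim\bigl(C_0(X/\Gamma)\bigr) \;\cong\; X/\Gamma,
\]
the last homeomorphism being the standard identification for commutative $\ca$-algebras. In particular, $\Prim(C_0(X)\rtimes\Gamma)$ is Hausdorff.

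Having established that $\Prim(C_0(X)\rtimes\Gamma)$ is Hausdorff, the conclusion follows immediately from Corollary~\ref{cor: abelian RIIP} applied to the inclusion $B\subseteq C_0(X)\rtimes\Gamma$. Indeed, that corollary says precisely that whenever the ambient algebra has Hausdorff primitive ideal space, the \riip{} and the \iip{} are equivalent for any $\ca$-subalgebra.

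The only potential obstacle is verifying that the Morita equivalence given by Green's theorem implements a homeomorphism of primitive ideal spaces in the hull--kernel topology; however, this is standard (the Rieffel correspondence is a lattice isomorphism between closed two-sided ideals that restricts to a homeomorphism of primitive ideal spaces). No further work is needed beyond citing Green and the abelian case, so the proof is short.
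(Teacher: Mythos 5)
Your proof is correct and follows essentially the same route as the paper: both reduce to Corollary~\ref{cor: abelian RIIP} by using Green's theorem to identify $\Prim(C_0(X)\rtimes\Gamma)$ with the Hausdorff space $X/\Gamma$. The only cosmetic difference is that you pass through the imprimitivity bimodule and the Rieffel correspondence explicitly, whereas the paper cites the resulting homeomorphism $\Prim(C_0(X)\rtimes\Gamma)\cong X/\Gamma$ directly from Green.
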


\begin{proof}
Note that the reduced crossed product $C(X) \rtimes_r \Gamma$ and the full crossed product $C(X) \rtimes \Gamma$ coincide since the action of $\Gamma$ is free and proper.
The quotient space $X/G$ is Hausdorff and $\Prim(C(X) \rtimes \Gamma) \cong X/G$ by \cite[Theorem~14]{Gre1977}.
The result thus follows from Corollary~\ref{cor: abelian RIIP}.
\end{proof}

The following application of Corollary~\ref{cor: abelian RIIP} gives a
wide variety of examples where the \riip\ is equivalent to the \iip.

\begin{theorem}\label{thm: iip v riip}
Let $(A,B)$ be a regular inclusion with $B$ abelian and assume that
$B^c$, the relative commutant of $B$ in $A$, is abelian.   If $(A,B^c)$
is a Cartan inclusion, then $(A,B)$ has the ideal intersection property if and only if $(A,B)$ has the regular ideal intersection property.
\end{theorem}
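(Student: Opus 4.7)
The plan is to split the \iip\ for $(A,B)$ through the intermediate chain $B\subseteq B^c\subseteq A$. The forward implication is immediate, so I assume $(A,B)$ satisfies the \riip\ and aim for the \iip. The argument will have three pieces: (1) transfer the \riip\ from $(A,B)$ down to $(B^c,B)$; (2) upgrade this to the \iip\ for $(B^c,B)$ by invoking Corollary~\ref{cor: abelian RIIP}, which applies because $B^c$ is abelian, so $\Prim(B^c)$ is Hausdorff; (3) combine with the \iip\ for the Cartan inclusion $(A,B^c)$ (Remark~\ref{rem: cartan iip}) to obtain the \iip\ for $(A,B)$ by transitivity.

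The technical heart of the argument, which I expect to be the main obstacle, is the following identity for any regular ideal $K\unlhd B^c$:
\[ K^{\perp_A\perp_A}\cap B^c = K. \]
The inclusion $K\subseteq K^{\perp_A\perp_A}\cap B^c$ is automatic. For the reverse, the key observation is that since $B^c$ commutes with $K$, one has $K^{\perp_A}\cap B^c = K^{\perp_{B^c}}$. Hence any $x\in K^{\perp_A\perp_A}\cap B^c$ annihilates $K^{\perp_{B^c}}$, placing $x$ in $K^{\perp_{B^c}\perp_{B^c}}=K$ by the regularity of $K$ inside $B^c$. The abelianness of $B^c$ is essential here; without it, the identification $K^{\perp_A}\cap B^c = K^{\perp_{B^c}}$ fails and the argument collapses.

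Once this identity is in hand, piece (1) is almost immediate: given a nonzero regular ideal $K$ of $B^c$, the ideal $J:=K^{\perp_A\perp_A}$ is a regular ideal of $A$ (by Lemma~\ref{lem: perp reg}) and is nonzero since it contains $K$. The \riip\ for $(A,B)$ then yields
\[ \{0\}\ne J\cap B = (J\cap B^c)\cap B = K\cap B, \]
which is the \riip\ for $(B^c,B)$. Piece (2) is a direct application of Corollary~\ref{cor: abelian RIIP}, after noting that $B\subseteq B^c$ is a genuine inclusion (any approximate unit for $A$ lying in $B$ also approximates $B^c$). Piece (3) proceeds by transitivity: for any nonzero ideal $J\unlhd A$, the Cartan \iip\ for $(A,B^c)$ gives $J\cap B^c\ne\{0\}$, and the \iip\ for $(B^c,B)$ then gives $J\cap B = (J\cap B^c)\cap B \ne \{0\}$, completing the proof.
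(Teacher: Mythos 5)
Your overall architecture is the same as the paper's: run the problem through the chain $B\subseteq B^c\subseteq A$, use Corollary~\ref{cor: abelian RIIP} on the abelian inclusion $B\subseteq B^c$, and use the \iip\ of the Cartan inclusion $(A,B^c)$; the paper merely phrases this as a contrapositive. Your pieces (2) and (3) are fine. The gap is in piece (1), at exactly the point you identify as the technical heart. For a regular ideal $K\unlhd B^c$ that is only an ideal of $B^c$, the set $K^{\perp_A}$ is \emph{not} an algebraic ideal of $A$ (for $a\in K^{\perp_A}$ and $b\in A$ one has $(ba)K=\{0\}$ but not $K(ba)=\{0\}$), so Lemma~\ref{lem: perp reg} does not apply and $J:=K^{\perp_A\perp_A}$ need not be an ideal of $A$ at all; consequently the \riip\ of $(A,B)$, which is a statement about regular \emph{ideals} of $A$, cannot be invoked for $J$. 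A concrete instance: $A=M_2(\bC)$ with $B=B^c$ the diagonal and $K=\bC e_{11}$ gives $K^{\perp_A}=\bC e_{22}$ and $K^{\perp_A\perp_A}=\bC e_{11}$, which is not an ideal of $M_2(\bC)$. In fact your own identity $K^{\perp_A\perp_A}\cap B^c=K$ exposes the obstruction: by Remark~\ref{rem: inv ideals} the intersection of an ideal of $A$ with $B^c$ is automatically invariant under $\N(A,B^c)$, so $K^{\perp_A\perp_A}$ can only be an ideal of $A$ when $K$ is invariant, and nothing forces the regular ideal $K$ you start from to be invariant. (A smaller point: the identity $K^{\perp_A}\cap B^c=K^{\perp_{B^c}}$ holds by the paper's definition of $\perp_{B^c}$ for any subalgebra; abelianness of $B^c$ is not what makes it true, and enters instead only through Corollary~\ref{cor: abelian RIIP}.)

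The paper circumvents this by lifting $K$ through the conditional expectation rather than through annihilators: it applies Theorem~\ref{thm: 1-1 reg ideals} to the Cartan inclusion $(A,B^c)$ to produce the regular ideal $J_K=\{a\in A\colon E(a^*a)\in K\}$ of $A$ with $J_K\cap B^c=K$, and then intersects with $B$. Note that the obvious repair of your construction --- replacing $K^{\perp_A\perp_A}$ by the genuine regular ideal $\<K\>_A^{\perp\perp}$ --- does not work either, because its intersection with $B^c$ can be strictly larger than $K$ and may meet $B$ even when $K$ does not (again visible in the $M_2(\bC)$ example, where $\<K\>_A=M_2(\bC)$). So the double-annihilator shortcut genuinely fails, and some version of the expectation-based lift is needed.
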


\begin{remark} Suppose $(A,B)$ is a regular inclusion with both $B$
  and $B^c$ abelian.  We wish to observe that $B^c\subseteq A$ is a
  regular inclusion of \cstaralg s.  An easy argument using the
  partial automorphism $\theta_n$ of
  \cite[Corollary~2.3]{Pit2021normalizers} shows
  $\N(A,B)\subseteq \N(A,B^c)$.  Therefore, $B^c$ is regular in $A$.
  Because $B$ contains an  approximate unit for $A$, so does $B^c$,
  whence $(A,B^c)$ is a regular
  inclusion.  Thus, in the setting of Theorem~\ref{thm: iip v riip},
  $(A,B^c)$ is a Cartan inclusion if and only if there is a faithful
  conditional expectation of $A$ onto $B^c$.
\end{remark}

\begin{proof}[\rm \textbf{Proof of Theorem~\ref{thm: iip v riip}}]
  To show that the \riip\ implies the \iip, we establish the
  contrapositive.  So assume  $B\subseteq A$ does not have the ideal
  intersection property and let  $\{0\}\neq J \unlhd A$ be
  such that $J \cap B = \{0\}$.   By hypothesis, 
  $B^c \subseteq A$ has the ideal intersection property.  Thus
  $J \cap B^c \neq \{0\}$.   Thus the inclusion
  $B \subseteq B^c$ does not have the ideal intersection property.
  By Corollary~\ref{cor: abelian RIIP}, $B \subseteq B^c$ does
  not have the regular ideal intersection property.

  Therefore, we may find a non-trivial regular ideal $K \unlhd B^c$ such
  that $K \cap B = \{0\}$.  By Theorem~\ref{thm: 1-1 reg ideals},
  there is a regular ideal
  $J_K \unlhd A$ such that
  $J_K\cap B^c = K$.  Hence
  $J_K \cap B = K \cap B = \{0\}$.  That is,
  $J_K\unlhd A$ is a non-trivial regular ideal with
  trivial intersection with $B$.  Hence $B \subseteq A$ does not have
  the regular ideal intersection property.
\end{proof}

We can readily apply Theorem~\ref{thm: iip v riip} to graph C$^*$-algebras.

\begin{corollary}\label{cor: graph iip}
Let $E$ be a row-finite graph.
Let $D_E$ be the abelain subalgebra of the graph C$^*$-algebra $C^*(E)$ described in Remark~\ref{ss: directed graphs}.
The following are equivalent:
\begin{enumerate}
\item $E$ satisfies condition~(L);
\item $D_E$ has the \iip\ in $C^*(E)$; and
\item $D_E$ has the \riip\ in $C^*(E)$.
\end{enumerate}
\end{corollary}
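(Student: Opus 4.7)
The equivalence (i) $\Leftrightarrow$ (ii) has already been noted in Remark~\ref{ss: directed graphs}, where it is explained that, via the Cuntz--Krieger uniqueness theorem, $D_E \subseteq C^*(E)$ has the \iip\ exactly when $E$ satisfies condition~(L). The implication (ii) $\Rightarrow$ (iii) is immediate from the definitions, since the \riip\ restricts attention to a subclass of ideals. The work therefore reduces to establishing (iii) $\Rightarrow$ (ii), and the plan is to do this by applying Theorem~\ref{thm: iip v riip} to the inclusion $(A,B) = (C^*(E), D_E)$.

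To apply Theorem~\ref{thm: iip v riip} I will verify: (a) $(C^*(E), D_E)$ is a regular inclusion; (b) $D_E$ is abelian; (c) the relative commutant $D_E^c$ in $C^*(E)$ is abelian; and (d) $(C^*(E), D_E^c)$ is a Cartan inclusion. Items (a) and (b) are standard: the partial isometries $s_\mu s_\nu^*$ (for finite paths $\mu,\nu$ with $s(\mu) = s(\nu)$) lie in $\N(C^*(E), D_E)$ and have dense linear span in $C^*(E)$, while $D_E$ is abelian by its very construction.

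For (c) and (d) I plan to pass to the Kumjian--Pask--Raeburn--Renault groupoid model $C^*(E) \cong C^*_r(G_E)$, under which $D_E$ is identified with $C_0(\unit{G_E})$. Under this identification, $D_E^c$ corresponds to $C^*_r(\mathrm{Iso}(G_E)^\circ)$, the reduced \ca-algebra of the interior isotropy subgroupoid, which is an \'etale group bundle over $\unit{G_E}$ whose fibers are subgroups of $\mathbb{Z}$; hence $D_E^c$ is abelian, establishing (c). For (d) I invoke the standard fact that, because $G_E$ is amenable, Hausdorff and \'etale, the pair $(C^*_r(G_E), C^*_r(\mathrm{Iso}(G_E)^\circ))$ is Cartan: the canonical faithful conditional expectation exists by amenability, and maximality of $D_E^c$ follows from the effectiveness of the quotient groupoid $G_E / \mathrm{Iso}(G_E)^\circ$.

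With (a)--(d) in hand, Theorem~\ref{thm: iip v riip} yields the equivalence of (ii) and (iii), completing the proof. The main obstacle is (d); the description and abelianness of $D_E^c$ follow from direct groupoid inspection, whereas the Cartan property of $(C^*(E), D_E^c)$ requires either citation of the groupoid facts just described or direct verification of the maximal abelian condition together with the existence of the faithful expectation.
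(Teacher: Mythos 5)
Your argument has the same skeleton as the paper's: both reduce the corollary to Theorem~\ref{thm: iip v riip} applied to $(C^*(E), D_E)$ together with the Cuntz--Krieger uniqueness theorem for (i)$\Leftrightarrow$(ii). The difference is in how the hypotheses on the relative commutant are supplied. The paper's proof is two lines: it cites \cite[Theorems~3.6 and 3.7]{NagRez2012} for the fact that $D_E^c$ is an (abelian) Cartan subalgebra of $C^*(E)$, and is done. You instead re-derive this input from the groupoid model $C^*(E)\cong C^*_r(G_E)$, which is in effect the argument the paper gives later for the more general Corollary~\ref{cor: riip groupoid}; this buys nothing extra here but is a perfectly valid route. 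The one step you assert without justification is the identification $D_E^c = C^*_r(\mathrm{Iso}(G_E)^{\circ})$. Renault's Proposition~II.4.7 only yields $C^*_r(\mathrm{Iso}(G_E)^{\circ}) \subseteq D_E^c$ together with the fact that elements of $D_E^c$ are supported on the isotropy; to conclude that such an element actually lies in $C^*_r(\mathrm{Iso}(G_E)^{\circ})$ one needs the interior of the isotropy to be \emph{closed} in $G_E$ (this is exactly how the paper argues in the proof of Corollary~\ref{cor: riip groupoid}). The same closedness hypothesis is what makes $(C^*_r(G_E), C^*_r(\mathrm{Iso}(G_E)^{\circ}))$ Cartan via \cite[Corollary~4.5]{BNRSW2016}. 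This closedness does hold for the groupoid of a row-finite graph, but it is not automatic for \'etale groupoids and should be verified or cited explicitly; alternatively, citing \cite{NagRez2012} as the paper does sidesteps the issue entirely.
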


\begin{proof}
The commutant of $D_E$ in $C^*(E)$ is a Cartan subalgebra of $C^*(E)$ by \cite[{Theorem~3.6 and Theorem~3.7}]{NagRez2012}.
Thus the result follows from Theorem~\ref{thm: iip v riip} and the Cuntz-Krieger Uniqueness theorem \cite[Theorem~3.7]{KPR1998}.
\end{proof}

Our final corollary states that the \riip\ and  the \iip\ coincide for many groupoid \cstaralg s of interest, not just graph C$^*$-algebras.

\begin{corollary}\label{cor: riip groupoid}   
Suppose $G$ is an \'etale groupoid and let $G'$ be the isotropy
of $G$.   Assume that $G'$ is abelian and  the interior of
$G'$ is closed in $G$. 
Then $C_0(\go) \subseteq C_r^*(G)$ has the ideal intersection property if and only if it has the regular ideal intersection property.
\end{corollary}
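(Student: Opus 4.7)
My plan is to apply Theorem~\ref{thm: iip v riip} with $A=C_r^*(G)$ and $B=C_0(\go)$. Then $B$ is abelian, $(A,B)$ is a regular inclusion (as noted in Section~\ref{exact}), and it suffices to verify that the relative commutant $B^c$ is abelian and that $(A,B^c)$ is a Cartan inclusion. Once this is done, Theorem~\ref{thm: iip v riip} gives the equivalence of the \iip\ and the \riip\ immediately.

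The central step is to identify $B^c$ with $C_r^*(H)$, where $H:=(G')^\circ$ denotes the interior of the isotropy of $G$. The hypothesis that $H$ is closed in $G$ makes $H$ a clopen subgroupoid of $G$, and in particular an open \'etale group bundle over $\go$ (each fibre $H_x$ is a subgroup of the isotropy group $G'_x$). First I would check the inclusion $C_r^*(H)\subseteq B^c$ by a direct convolution computation: because $H$ is a group bundle, functions in $C_c(H)$ (extended by zero to $G$) commute with $C_c(\go)\subseteq C_0(\go)$. For the converse inclusion $B^c\subseteq C_r^*(H)$, I would use that the canonical faithful conditional expectation $E_\go:C_r^*(G)\to C_0(\go)$ is implemented by restriction to $\go$, together with the standard observation that an element of $C_r^*(G)$ commuting with all of $C_0(\go)$ must be ``supported'' (in the Fourier-coefficient sense of $E_\go$) on the set of $\gamma\in G$ where every neighborhood of $\gamma$ meets $G'$, i.e., on $\overline{(G')^\circ}=H$ (using that $H$ is closed). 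This is the same computation as in Renault's description of the commutant of $C_0(\go)$ in an \'etale groupoid \cstaralg.

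Having identified $B^c=C_r^*(H)$, commutativity of $B^c$ follows from the hypothesis that $G'$ is abelian: each fibre $H_x\subseteq G'_x$ is then abelian, and a direct fibrewise convolution calculation shows that $C_c(H)$ is commutative, hence so is its closure $C_r^*(H)$. For the Cartan axioms on $(A,B^c)$: regularity of the inclusion $B^c\subseteq A$ and the fact that $B^c$ contains an approximate unit for $A$ follow from the remark immediately preceding Theorem~\ref{thm: iip v riip} (since $B\subseteq B^c$). Maximal abelianness of $B^c$ is automatic from $B^c$ being abelian together with $B\subseteq B^c$, which yields $(B^c)^c\subseteq B^c\subseteq (B^c)^c$. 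Finally, because $H$ is clopen in $G$, the restriction map $C_c(G)\ni f\mapsto f|_H$ extends to a contractive conditional expectation $E:C_r^*(G)\to C_r^*(H)$; faithfulness of $E$ follows because composing with the canonical expectation $C_r^*(H)\to C_0(\go)$ yields the faithful expectation $E_\go$, so $E(a^*a)=0$ forces $E_\go(a^*a)=0$ and hence $a=0$.

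The main obstacle I expect is making the identification $B^c=C_r^*(H)$ rigorous: the inclusion $C_r^*(H)\subseteq B^c$ is easy, but showing that every element of $B^c$ lies in $C_r^*(H)$ requires some care with the Fourier-type decomposition provided by $E_\go$ and the topology of $G$ near $H$. The assumption that $H$ is closed (and hence clopen) is exactly what is needed to make this argument go through cleanly and to produce the faithful conditional expectation onto $B^c$; without it one would have to work with a subalgebra of $C_r^*(G)$ that is not of the form $C_r^*(K)$ for an open subgroupoid $K$.
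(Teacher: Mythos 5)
Your proposal is correct and follows essentially the same route as the paper: both reduce the statement to Theorem~\ref{thm: iip v riip} by identifying the relative commutant $C_0(\go)^c$ with $C_r^*\bigl((G')^\circ\bigr)$ via the support argument for elements commuting with $C_0(\go)$ (the paper cites \cite[Proposition~II.4.7(i)]{RenaultBook} for this) together with the hypothesis that $(G')^\circ$ is closed. The only difference is that the paper outsources the fact that $C^*\bigl((G')^\circ\bigr)$ is Cartan in $C_r^*(G)$ to \cite[Corollary~4.5]{BNRSW2016}, whereas you sketch the Cartan axioms directly (commutativity fibrewise, maximal abelianness from $B\subseteq B^c$, and the faithful expectation by restriction to the clopen set); your sketch is sound, modulo the small imprecision that the open set where the Fourier coefficient of $b\in C_0(\go)^c$ is nonzero lies in $G'$ and hence in $(G')^\circ$, which is the step where closedness of $(G')^\circ$ is actually used.
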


\begin{proof}
Let $(G')^\circ$ be the interior of the isotropy of $G$.
  By \cite[Corollary~4.5]{BNRSW2016}, $C^*(\interior{(G')})$
  is Cartan in $C_r^*(G)$.
  Let $C_0(\unit G)^c$ be the relative commutant of $C_0(\go)$ inside $C_r^*(G)$.
 The result will follow   from
 Theorem~\ref{thm: iip v riip} once we show,
 \begin{equation}\label{rightcom} C_0(\unit
   G)^c=C^*_r(\interior{(G')}).
 \end{equation}
 Now Proposition II.4.7 (i) in \cite{RenaultBook} shows first that $C^*_r(\interior{(G')})\subseteq C_0(\unit
   G)^c$ and second that for any $b\in C_0(\unit
   G)^c$ we must have $b$  supported in $G'$.  But then the open support of $b$ must be contained in $\interior{(G')}$ and since $\interior{(G')}$ is closed we get $b\in C^*_r(\interior{(G')})$, that is  $C_0(\unit
   G)^c=C^*_r(\interior{(G')})$ as desired.\qedhere
 \end{proof}

%%%%%%%%%%%%%%%%%%%%%%%%%%%%%%%%%%%%
 %% The following is the bibliography info.  If you need to add or
%% edit references, do the following:
%%
 %% A) Edit the file "regularideals.bib" (Current version is June 14,
 %% 2023);
 %%
 %% B) Uncomment the lines below these instructions;
 %%
 %% C) Run latex and bibtex to get the new bibliography
 %%
 %% D) Remove the old bibliography from the file below;
 %%
 %% E) Insert the new bbl file that bibtex generated;
 %%
 %% F) Comment out the three lines you uncommented in B).
 %%
%%%%%%%%%%%%%%%%%%%%%%%%%%%%%%%%%%%%%%% 

%%%%%%%%% UNCOMMENT THESE LINES TO CHANGE THE BIBLIOGRAPHY  
% \bibliography{regularideals.bib}

\begin{thebibliography}{10}

\bibitem{anHRae1997}
Astrid an~Huef and Iain Raeburn, \emph{The ideal structure of {C}untz-{K}rieger
  algebras}, Ergodic Theory Dynam. Systems \textbf{17} (1997), no.~3, 611--624.
  \MR{1452183}

\bibitem{Ana_weak_cont}
Claire Anantharaman-Delaroche, \emph{Some remarks about the weak containment
  property for groupoids and semigroups}, arXiv 1604.01724, 2021.

\bibitem{ArcSpie1994}
R.~J. Archbold and J.~S. Spielberg, \emph{Topologically free actions and ideals
  in discrete {$C^*$}-dynamical systems}, Proc. Edinburgh Math. Soc. (2)
  \textbf{37} (1994), no.~1, 119--124. \MR{1258035}

\bibitem{Archbold_an_Huef08}
Robert Archbold and Astrid an~Huef, \emph{Strength of convergence and
  multiplicities in the spectrum of a {$C^*$}-dynamical system}, Proc. Lond.
  Math. Soc. (3) \textbf{96} (2008), no.~3, 545--581. \MR{2407813}

\bibitem{BarLi2017}
Sel\c{c}uk Barlak and Xin Li, \emph{Cartan subalgebras and the {UCT} problem},
  Adv. Math. \textbf{316} (2017), 748--769. \MR{3672919}

\bibitem{BPRS2000}
Teresa Bates, David Pask, Iain Raeburn, and Wojciech Szyma\'{n}ski, \emph{The
  {$C^*$}-algebras of row-finite graphs}, New York J. Math. \textbf{6} (2000),
  307--324. \MR{1777234}

\bibitem{Bor2019}
Clemens Borys, \emph{The Furstenberg boundary of a groupoid}, arXiv:1904.10062,
  January 2020.

\bibitem{BCFS2014}
Jonathan Brown, Lisa~Orloff Clark, Cynthia Farthing, and Aidan Sims,
  \emph{Simplicity of algebras associated to \'{e}tale groupoids}, Semigroup
  Forum \textbf{88} (2014), no.~2, 433--452. \MR{3189105}

\bibitem{BFPR2021graded}
Jonathan~H. Brown, Adam~H. Fuller, David~R. Pitts, and Sarah~A. Reznikof,
  \emph{Graded {$C^\ast$}-algebras and twisted groupoid {$C^\ast$}-algebras},
  New York J. Math. \textbf{27} (2021), 205--252. \MR{4209533}

\bibitem{BFPR2021reg}
Jonathan~H. Brown, Adam~H. Fuller, David~R. Pitts, and Sarah~A. Reznikoff,
  \emph{Regular ideals of graph algebras}, Rocky Mountain J. Math. \textbf{52}
  (2022), no.~1, 43--48. \MR{4409913}

\bibitem{BNRSW2016}
Jonathan~H. Brown, Gabriel Nagy, Sarah Reznikoff, Aidan Sims, and Dana~P.
  Williams, \emph{Cartan subalgebras in {$C^*$}-algebras of {H}ausdorff
  \'{e}tale groupoids}, Integral Equations Operator Theory \textbf{85} (2016),
  no.~1, 109--126. \MR{3503181}

\bibitem{BrownOzawa}
Nathanial~P. Brown and Narutaka Ozawa, \emph{{$C^*$}-algebras and
  finite-dimensional approximations}, Graduate Studies in Mathematics, vol.~88,
  American Mathematical Society, Providence, RI, 2008. \MR{2391387}

\bibitem{Echterhoff94}
Siegfried Echterhoff, \emph{On transformation group {$C^*$}-algebras with
  continuous trace}, Trans. Amer. Math. Soc. \textbf{343} (1994), no.~1,
  117--133. \MR{1157612}

\bibitem{Exel} Ruy Exel, \emph{Regular ideals under the ideal intersection property}, arXiv:2301.10073, 2023.

\bibitem{ExePitZar2021}
Ruy Exel, David~R. Pitts, and Vrej Zarikian, \emph{Exotic ideals in free
  transformation group {${C}^*$}-algebras}, Bulletin of the London Mathematical
  Society \textbf{55} (2023), no.~3, 1489--1510.

\bibitem{FeldmanMooreErEqReII}
Jacob Feldman and Calvin~C. Moore, \emph{Ergodic equivalence relations,
  cohomology, and von {N}eumann algebras. {II}}, Trans. Amer. Math. Soc.
  \textbf{234} (1977), no.~2, 325--359. \MR{58 \#28261b}

\bibitem{GivantHalmosBook}
Steven Givant and Paul Halmos, \emph{Introduction to {B}oolean algebras},
  Undergraduate Texts in Mathematics, Springer, New York, 2009. \MR{2466574}

\bibitem{GonRoy2021}
Daniel Gonçalves and Danilo Royer, \emph{A note on the regular ideals of
  {L}eavitt path algebras}, J. Algebra Appl. \textbf{21} (2022), no. 11, Paper No. 2250225. \MR{4489970}

\bibitem{Gre1977}
Philip Green, \emph{{$C\sp*$}-algebras of transformation groups with smooth
  orbit space}, Pacific J. Math. \textbf{72} (1977), no.~1, 71--97. \MR{453917}

\bibitem{HalBoolBook}
Paul~R. Halmos, \emph{Lectures on {B}oolean algebras}, Van Nostrand
  Mathematical Studies, No. 1, D. Van Nostrand Co., Inc., Princeton, N.J.,
  1963. \MR{0167440}

\bibitem{Ham1982}
Masamichi Hamana, \emph{The centre of the regular monotone completion of a
  {$C^{\ast} $}-algebra}, J. London Math. Soc. (2) \textbf{26} (1982), no.~3,
  522--530. \MR{684565}

\bibitem{HLS}
N.~Higson, V.~Lafforgue, and G.~Skandalis, \emph{Counterexamples to the
  {B}aum-{C}onnes conjecture}, Geom. Funct. Anal. \textbf{12} (2002), no.~2,
  330--354. \MR{1911663}

\bibitem{Kat2004}
Takeshi Katsura, \emph{A class of {$C^\ast$}-algebras generalizing both graph
  algebras and homeomorphism {$C^\ast$}-algebras. {I}. {F}undamental results},
  Trans. Amer. Math. Soc. \textbf{356} (2004), no.~11, 4287--4322. \MR{2067120}

\bibitem{Kaw2017}
Takuya Kawabe, \emph{Uniformly recurrent subgroups and the ideal structure of
  reduced crossed products}, arXiv:1701.03413v1 (2017).

\bibitem{KawTom1990}
Shinz\=o Kawamura and Jun Tomiyama, \emph{Properties of topological dynamical
  systems and corresponding {$C^*$}-algebras}, Tokyo J. Math. \textbf{13}
  (1990), no.~2, 251--257. \MR{1088230}

\bibitem{KKLRU2021}
Matthew Kennedy, Se-Jin Kim, Xin Li, Sven Raum, and Dan Ursu, \emph{The ideal
  intersection property for essential groupoid {C$^*$}-algebras},
  arXiv:2107.03980, 2021.

\bibitem{KenSch2019}
Matthew Kennedy and Christopher Schafhauser, \emph{Noncommutative boundaries
  and the ideal structure of reduced crossed products}, Duke Math. J.
  \textbf{168} (2019), no.~17, 3215--3260. \MR{4030364}

\bibitem{KPR1998}
Alex Kumjian, David Pask, and Iain Raeburn, \emph{Cuntz-{K}rieger algebras of
  directed graphs}, Pacific J. Math. \textbf{184} (1998), no.~1, 161--174.
  \MR{1626528}

\bibitem{KPRR1997}
Alex Kumjian, David Pask, Iain Raeburn, and Jean Renault, \emph{Graphs,
  groupoids, and {C}untz-{K}rieger algebras}, J. Funct. Anal. \textbf{144}
  (1997), no.~2, 505--541. \MR{1432596}

\bibitem{KwaMey2020}
Bartosz~Kosma Kwa\'{s}niewski and Ralf Meyer, \emph{Noncommutative {C}artan
  {$\rm C^*$}-subalgebras}, Trans. Amer. Math. Soc. \textbf{373} (2020),
  no.~12, 8697--8724. \MR{4177273}

\bibitem{KwaMey2021b}
\bysame, \emph{Ideal structure and pure infiniteness of inverse semigroup
  crossed products}, J. Noncommut. Geom. \textbf{17} (2023), no. 3, 999--1043.

\bibitem{KwaMey2021}
Bartosz~Kosma Kwaśniewski and Ralf Meyer, \emph{{Aperiodicity: the almost
  extension property and uniqueness of pseudo-expectations}}, Int. Math. Res. Not. IMRN(2022), no. 18, 14384--14426.

\bibitem{Lal2019}
Scott~M. LaLonde, \emph{Some consequences of stabilization theorem for {F}ell
  bundles over exact groupoids}, J. Operator Theory \textbf{81} (2019), no.~2,
  335--369. \MR{3959062}

\bibitem{MurphyBook}
Gerard~J. Murphy, \emph{{$C^*$}-algebras and operator theory}, Academic Press,
  Inc., Boston, MA, 1990. \MR{1074574}

\bibitem{NagRez2012}
Gabriel Nagy and Sarah Reznikoff, \emph{Abelian core of graph algebras}, J.
  Lond. Math. Soc. (2) \textbf{85} (2012), no.~3, 889--908. \MR{2927813}

\bibitem{NagRez2014}
\bysame, \emph{Pseudo-diagonals and uniqueness theorems}, Proc. Amer. Math.
  Soc. \textbf{142} (2014), no.~1, 263--275. \MR{3119201}

\bibitem{Pit2017}
David~R. Pitts, \emph{Structure for regular inclusions. {I}}, J. Operator
  Theory \textbf{78} (2017), no.~2, 357--416. \MR{3725511}

\bibitem{Pit2021}
\bysame, \emph{Structure for regular inclusions. {II}: {C}artan envelopes,
  pseudo-expectations and twists}, J. Funct. Anal. \textbf{281} (2021), no.~1,
  108993, 66. \MR{4234857}

\bibitem{Pit2021normalizers}
David~R. Pitts, \emph{Normalizers and approximate units for inclusions of
  {$C^*$}-algebras}, Indiana U. Math. J. (to appear), arXiv:2109.00856.

\bibitem{PitZar2015}
David~R. Pitts and Vrej Zarikian, \emph{Unique pseudo-expectations for
  {$C^*$}-inclusions}, Illinois J. Math. \textbf{59} (2015), no.~2, 449--483.
  \MR{3499520}

\bibitem{RaeGraphAlg}
Iain Raeburn, \emph{Graph algebras}, CBMS Regional Conference Series in
  Mathematics, vol. 103, Published for the Conference Board of the Mathematical
  Sciences, Washington, DC; by the American Mathematical Society, Providence,
  RI, 2005. \MR{2135030}

\bibitem{RenaultBook}
Jean Renault, \emph{A groupoid approach to {$C^{\ast} $}-algebras}, Lecture
  Notes in Mathematics, vol. 793, Springer, Berlin, 1980. \MR{584266}

\bibitem{Ren1991}
\bysame, \emph{The ideal structure of groupoid crossed product
  {$C^\ast$}-algebras}, J. Operator Theory \textbf{25} (1991), no.~1, 3--36,
  With an appendix by Georges Skandalis. \MR{1191252}

\bibitem{Ren2008}
\bysame, \emph{Cartan subalgebras in {$C^*$}-algebras}, Irish Math. Soc. Bull.
  (2008), no.~61, 29--63. \MR{2460017}

\bibitem{Sch2021}
Tim Schenkel, \emph{Regular ideals of locally-convex higher-rank graph
  algebras}, New York J. Math. \textbf{28} (2022), 1581--1595. \MR{4521031}

\bibitem{Sie2010}
Adam Sierakowski, \emph{The ideal structure of reduced crossed products},
  M\"{u}nster J. Math. \textbf{3} (2010), 237--261. \MR{2775364}

\bibitem{SimsGroupoidsBook}
Aidan Sims, \emph{{H}ausdorff \' etale groupoids and their {$C^*$-algebras}},
  in ``Operator algebras and dynamics: groupoids, crossed products, and
  {R}okhlin dimension'' by A. Sims, G. Szab\'{o}, and D. Williams (Francesc
  Perera, ed.), Advanced Courses in Mathematics. CRM Barcelona,
  Birkh\"{a}user/Springer, Cham, 2020, pp.~58--120. \MR{4321941}

\bibitem{SteenSeebach}
Lynn~A. Steen and J.~Arthur Seebach, Jr., \emph{Counterexamples in topology},
  Holt, Rinehart and Winston, Inc., New York-Montreal, Que.-London, 1970.
  \MR{0266131}

\bibitem{Tak2014}
Takuya Takeishi, \emph{On nuclearity of {$C^*$}-algebras of {F}ell bundles over
  \'{e}tale groupoids}, Publ. Res. Inst. Math. Sci. \textbf{50} (2014), no.~2,
  251--268. \MR{3223473}

\bibitem{Tom1957}
Jun Tomiyama, \emph{On the projection of norm one in {$W^{\ast} $}-algebras},
  Proc. Japan Acad. \textbf{33} (1957), 608--612. \MR{96140}

\bibitem{VersikNoDeOrAlOp}
A.~M. Ver\v{s}ik, \emph{Nonmeasurable decompositions, orbit theory, algebras of
  operators}, Soviet Math. Dokl. \textbf{12} (1971), 1218--1222. \MR{0287331}

\bibitem{Wil2015}
Rufus Willett, \emph{A non-amenable groupoid whose maximal and reduced
  {$C^*$}-algebras are the same}, M\"{u}nster J. Math. \textbf{8} (2015),
  no.~1, 241--252. \MR{3549528}

\bibitem{Williams81}
Dana~P. Williams, \emph{Transformation group {$C^{\ast} $}-algebras with
  continuous trace}, J. Functional Analysis \textbf{41} (1981), no.~1, 40--76.
  \MR{614226}

\bibitem{Williams82}
\bysame, \emph{Transformation group {$C^{\ast} $}-algebras with {H}ausdorff
  spectrum}, Illinois J. Math. \textbf{26} (1982), no.~2, 317--321. \MR{650397}

\bibitem{Zar2019}
Vrej Zarikian, \emph{Unique expectations for discrete crossed products}, Ann.
  Funct. Anal. \textbf{10} (2019), no.~1, 60--71. \MR{3899956}

\end{thebibliography}
%\bibliographystyle{amsplain}
%\end{document}

\providecommand{\bysame}{\leavevmode\hbox to3em{\hrulefill}\thinspace}
\providecommand{\MR}{\relax\ifhmode\unskip\space\fi MR }
% \MRhref is called by the amsart/book/proc definition of \MR.
\providecommand{\MRhref}[2]{%
  \href{http://www.ams.org/mathscinet-getitem?mr=#1}{#2}
}
\providecommand{\href}[2]{#2}

\end{document}